\numberwithin{figure}{section}
\theoremstyle{plain}
\newtheorem{thm}{\protect\theoremname}[section]
\theoremstyle{definition}
\newtheorem{rem}[thm]{\protect\remarkname}
\theoremstyle{definition}
\newtheorem{defn}[thm]{\protect\definitionname}
\theoremstyle{plain}
\newtheorem{prop}[thm]{\protect\propositionname}
\theoremstyle{plain}
\newtheorem{lem}[thm]{\protect\lemmaname}
\theoremstyle{plain}
\theoremstyle{plain}
\newtheorem{cor}[thm]{\protect\corollaryname}
\theoremstyle{definition}
\theoremstyle{definition}
\theoremstyle{definition}
\theoremstyle{definition}
\theoremstyle{plain}
\newtheorem*{SSOYconjecture2}{Schroer--Sauer--Ott--Yorke prediction error conjecture}
\DeclareMathOperator{\diam}{diam}
\DeclareMathOperator{\dist}{dist}
\DeclareMathOperator{\Leb}{Leb}
\DeclareMathOperator{\supp}{supp}
\newcommand{\D}{\mathbb D}
\newcommand{\R}{\mathbb R}
\newcommand{\Z}{\mathbb Z}
\newcommand{\N}{\mathbb N}
\renewcommand{\Im}{\textup{Im}}
\newcommand{\eps}{\varepsilon}
\newcommand{\mF}{\mathcal{F}}
\newcommand{\mH}{\mathcal{H}}
\newcommand{\mB}{\mathcal{B}}
\newcommand{\mE}{\mathcal{E}}
\newcommand{\mR}{\mathcal{R}}
\newcommand{\udim}{\overline{\dim}_B\,}
\newcommand{\ldim}{\underline{\dim}_B\,}
\newcommand{\bdim}{\dim_B}
\newcommand{\hdim}{\dim_H}
\newcommand{\lhdim}{\underline{\dim}_H}
\newcommand{\uid}{\overline{\mathrm{ID}}}
\newcommand{\lid}{\underline{\mathrm{ID}}}
\newcommand{\idim}{\mathrm{ID}}
\newcommand{\lbdim}{\underline{\dim}_B\,}
\newcommand{\cdim}{\dim_{\mathrm{c}}}
\newcommand{\ld}{\underline{d}}
\newcommand{\ud}{\overline{d}}
\DeclareMathOperator{\Ker}{Ker}
\DeclareMathOperator{\Lip}{Lip}
\DeclareMathOperator{\Per}{Per}
\DeclareMathOperator{\Prep}{PrePer}
\DeclareMathOperator{\rank}{rank}
\providecommand{\conjecturename}{Conjecture}
\providecommand{\corollaryname}{Corollary}
\providecommand{\definitionname}{Definition}
\providecommand{\examplename}{Example}
\providecommand{\lemmaname}{Lemma}
\providecommand{\problemname}{Problem}
\providecommand{\propositionname}{Proposition}
\providecommand{\remarkname}{Remark}
\providecommand{\theoremname}{Theorem}
\providecommand{\taskname}{Task}
\newcommand{\lam}{\lambda}
\newcommand{\om}{\omega}
\def\Om{\Omega}
\def\N{{\mathbb N}}
\def\Sk{{\mathcal S}}
\def\be{\begin{equation}}
	\def\ee{\end{equation}}
\newcommand{\Ek}{{\mathcal E}}
\def\ldim{\underline{\dim}}
\author[K. Bara\'{n}ski]{Krzysztof Bara\'{n}ski$^*$}
\address{$^*$Institute of Mathematics, University of Warsaw, ul.~Banacha 2, 02-097 Warszawa, Poland}
\email{baranski@mimuw.edu.pl}
\author[Y. Gutman]{Yonatan Gutman$^\dagger$}
\address{$^\dagger$Institute of Mathematics, Polish Academy of Sciences,
	ul.~\'Sniadeckich 8, 00-656 Warszawa, Poland}
\email{gutman@impan.pl}
\author[A. \'{S}piewak]{Adam \'{S}piewak$^\dagger$}
\email{ad.spiewak@gmail.com}
\title{Predicting dynamical systems with too few time-delay measurements: error estimates}
\thanks{YG and A\'S were partially supported by the National Science Centre (Poland) grant 2020/39/B/ST1/02329.}
\subjclass[2020]{ 37C45, 37C40, 58D10}
\begin{document}

\maketitle

\begin{abstract}
We study the problem of reconstructing and predicting the future of a dynamical system by the use of time-delay measurements of typical observables. 
Considering the case of too few measurements, we prove that for Lipschitz systems on compact sets in Euclidean spaces, equipped with an invariant Borel probability measure $\mu$ of Hausdorff dimension $d$, one needs at least $d$ measurements of a typical (prevalent) Lipschitz observable for $\mu$-almost sure reconstruction and prediction. Consequently, the Hausdorff dimension of $\mu$ is the precise threshold for the minimal delay (embedding) dimension for such systems in a probabilistic setting. Furthermore, we establish a lower bound postulated in the Schroer--Sauer--Ott--Yorke prediction error conjecture from 1998, after necessary modifications (whereas the upper estimates were obtained in our previous work). To this aim, we prove a general theorem on the dimensions of conditional measures of $\mu$  with respect to time-delay coordinate maps.
\end{abstract}

\section{Introduction}

\subsection{Time-delayed measurements}

The paper considers the problem of reconstructing or predicting an (unknown) future of a dynamical system by time-delayed measurements of observables. This is one of the central themes in non-linear data analysis, leading to non-trivial mathematical questions related to practical tools and algorithms used in applications. Consider a \emph{phase space} $X$ (the set of all possible states of the system) and a deterministic dynamics on $X$, generated by 
a transformation $T \colon X \to X$, which defines a one-step \emph{evolution rule}. Let $h \colon X \to \R$ be an \emph{observable}, which can be seen as a function measuring certain parameter of the system. We assume that the observer has no direct access to the original system $(X,T)$ and its (finite) orbits
\[ x, Tx, T^2 x,\ldots, T^m x, \qquad x \in X,\;  m \in \N,\]
whereas the knowledge of the system is derived from observations (measurements) of the values of $h$ along the orbits, that is
\begin{equation}\label{eq: time series} h(x), h(Tx), h(T^2 x), \ldots, h(T^m x).
\end{equation}
One of the main tasks arising in this context is \emph{reconstructing} the unknown dynamics $(X,T)$ based on the observational data \eqref{eq: time series}. In particular, one would like to \emph{predict} the future values $h(T^{m+1} x), h(T^{m+2} x, \ldots)$ from the \emph{time series} given by \eqref{eq: time series}. One of the effective approaches to these problems is to construct a \emph{time-delayed model} of the system in a high-dimensional Euclidean space by the use of the time series \eqref{eq: time series}, hoping that this will unfold the original dynamics. More precisely, fixing a positive integer $k$ (called \emph{delay length} or \emph{delay dimension}) for the dimension of the \emph{reconstruction space} $\R^k$, one transforms \eqref{eq: time series} into a sequence of points $y_j \in \R^k$ as
\begin{equation}\label{eq:y_j}
y_j = (h(T^j x), h(T^{j+1} x), \ldots, h(T^{j+k-1} x)) \qquad \text{ for } j=0, \ldots, m-k+1. 
\end{equation}
One expects that if $k$ is large enough, then the original dynamics $(T,X)$ can be reliably modelled or approximated by the \emph{observed dynamics} $y_j \mapsto y_{j+1}$ in $\R^k$. This idea has proved fruitful in applications, see e.g.~\cite{sm90nonlinear,SeaClutter,hgls05distinguishing,BioclimaticBuildings,  BaghReddy23}, and a mathematical theory has been built to understand the underlying mechanisms and conditions that determine its performance, see e.g.~\cite{PCFS80,FarmerSidorowich87,KY90,SYC91,KBA92,SSOY98,Voss03,Rob11,HBS15} and the references therein. 

The main object of interest in this theory is the $k$-\emph{delay coordinate map} corresponding to the observable $h$, defined as
\begin{equation}\label{eq:coord_map}
\phi_{h,k} \colon X \to \R^k, \qquad \phi_{h,k} (x) = (h(x), h(Tx), \ldots, h(T^{k-1}x)).
\end{equation}
We will frequently suppress the dependence of the delay coordinate map on $k$, writing $\phi_h$ instead of $\phi_{h,k}$. Note that in this notation, the reconstruction space time series \eqref{eq:y_j} is given as $y_j = \phi_h(T^j x)$ and the problem of predicting future values of the time series \eqref{eq: time series} for an observable $h$ translates to the question whether $\phi_h(x)$ determines $\phi_h(Tx)$ uniquely. Similarly, the problem of reconstructing the dynamics $(X,T)$ by time-delay measurements of $h$ translates to the injectivity properties of $\phi_h$. In terms of the dynamical systems theory, these two conditions on $\phi_h$ can be rephrased as being, respectively, a factor map (semi-conjugation) and an isomorphism (conjugation) between the original system and its image under $\phi_h$, in a category (regularity) depending on the context. In both cases, the diagram
\begin{equation}\label{eq:comm_diagram}
\begin{CD}
	X @>T>> X\\
	@VV\phi_h V @VV \phi_h V\\
	\phi_h(X) @>S_h>> \phi_h(X)
\end{CD}
\end{equation}
commutes, with the well-defined map $S_h$ given as
\begin{equation*}\label{eq:prediction map}
S_h(h(x), h(Tx), \ldots, h(T^{k-1}x)) = (h(Tx), h(T^2 x), \ldots, h(T^k x)). 
\end{equation*}
On the reconstruction space time series \eqref{eq:y_j} the map $S_h$ acts as $S_h(y_j) = y_{j+1}$, realizing a one-step prediction of the next term in \eqref{eq: time series}. In view of this, we formulate the following definition.

\begin{defn}\label{defn:predict-embed}
An observable $h\colon X \to\R$ is (\emph{deterministically}) $k$-\emph{predictable}, if for the $k$-delay coordinate map $\phi_h\colon X \to \R^k$ given by \eqref{eq:coord_map} there exists a map $S_h\colon \phi_h(X) \to \phi_h(X)$, called the \emph{prediction map}, such that the diagram \eqref{eq:comm_diagram} commutes.
\end{defn}
Note that if $\phi_h$ is injective, then the diagram \eqref{eq:comm_diagram} exists, with $S_h$ defined as $S_h = \phi_h\circ T \circ \phi_h^{-1}$.  In this case, the conjugated system $(\phi_h(X), S_h)$ can be treated as a faithful \emph{reconstruction model} of the original dynamical system $(X,T)$ in the reconstruction space $\R^k$, based only on the time-delay values of the observable $h$. Furthermore, if the $k$-delay coordinate map $\phi_h$ is continuous and injective on a compact set $X$, then it is a topological embedding of $X$ onto $\phi_h(X) \subset \R^k$. In this case we say that $\phi_h$ is a $k$-\emph{delay embedding}.

There is a long history of mathematical results on the embedding properties of $\phi_h$ for typical observables $h$. These are usually referred to as \emph{Takens-type embedding theorems}, as they are related to a classical result of Floris Takens \cite{T81} from 1981. In an extended version by Huke \cite{Huke-report,StarkEmbedSurvey} (see also~\cite{N91}), it states that if $T\colon X \to X$ is a $C^1$-diffeomorphism of a compact $C^1$-manifold $X$ such that all periodic orbits of $T$ of period smaller than $2\dim X$ are isolated and hyperbolic and each has distinct eigenvalues, then for a generic $C^1$-observable $h \colon X \to \R$, the $k$-delay coordinate map $\phi_h$ is a  $C^1$-embedding for $k>2\dim X$. This seminal result was extended in a number of papers, including \cite{SYC91, StarkEmbedSurvey, 1999delay, CaballeroEmbed, TakensPlato03, StarkStochEmbed, Rob05, Gut16, GQS18, BGS20, NV18}, to various settings and categories of systems. Apart from that, the predictability problem was considered, among others, in \cite{T02,SSOY98, BGS22,BGSPredict,KoltaiKunde}. The results obtained in the references mentioned above are considered to be a theoretical basis for procedures of time-delay reconstruction and prediction of the dynamics, used in applications, see e.g.~\cite{Rainfall94, hgls05distinguishing, Streamflow07, SolarCycle18, Dlotko_et_al}. A common feature in these papers is that the bound on the delay dimension $k$ is related to the dimension of the phase space $X$. Let us state two results which are relevant to the present work. Following \cite{SYC91}, they are presented in the category of Lipschitz transformations and observables on compact sets in Euclidean spaces, where genericity is understood as the \emph{prevalence} with a polynomial probe set (see Subsection~\ref{subsec:preval}). Below, the symbol $\udim$ denotes the upper box dimension - see Subsection~\ref{subsec:dim} for the definitions of all notions of dimensions that appear in this section and the relations between them.
 
\begin{thm}[{\bf Time-delay prediction and embedding theorem}{}]\label{thm:predict_determ}
Let $X \subset \R^N$, $N \in \N$, be a compact set and let $T\colon X \to X$ be a Lipschitz map. Then a prevalent Lipschitz observable $h \colon X \to \R$ is $k$-predictable, with a continuous prediction map, for every $k > 2\udim X$. If, additionally, $T$ is injective and $2\udim (\{ x \in X : T^p x = x \} )< p$ for $p=1, \ldots, k-1$, then the $k$-delay coordinate map $\phi_h$ is a $k$-delay embedding for a prevalent Lipschitz observable $h \colon X \to \R$.
\end{thm}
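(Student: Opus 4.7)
The strategy is the prevalence argument of Sauer--Yorke--Casdagli \cite{SYC91}, adapted to Lipschitz systems on compact subsets of Euclidean space. Fix a Lipschitz baseline $h_0 \colon X \to \R$ and take as the probe set the finite-dimensional space $\mathcal{P}_K$ of polynomials on $\R^N$ of degree at most $K$, for sufficiently large $K$. It then suffices to show that the set of $\alpha \in \mathcal{P}_K$ for which $h := h_0 + \alpha$ violates the desired conclusion has Lebesgue measure zero in $\mathcal{P}_K$.

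For the predictability statement, $h$ fails to be $k$-predictable precisely when there is a pair $(x,y) \in X \times X$ with $\phi_h(x) = \phi_h(y)$ and $h(T^k x) \neq h(T^k y)$. Consider the evaluation map
\[
\Psi(x,y,\alpha) = \bigl((h_0+\alpha)(T^i x) - (h_0+\alpha)(T^i y)\bigr)_{i=0}^{k} \in \R^{k+1},
\]
so that the bad event is $\Psi(x,y,\alpha) \in B := \{0\}^k \times (\R \setminus \{0\})$. For a fixed $(x,y)$ whose orbit points $T^i x, T^i y$ for $i = 0, \ldots, k$ are pairwise distinct, the linear part of $\alpha \mapsto \Psi(x,y,\alpha)$ on $\mathcal{P}_K$ is surjective onto $\R^{k+1}$ once $K$ is large enough, since polynomials of sufficient degree can prescribe arbitrary values at finitely many distinct points. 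Hence the $\alpha$-preimage of $B$ has codimension $k$ in $\mathcal{P}_K$. A Lipschitz transport estimate combined with this rank bound gives, for each pair of $\epsilon$-balls in $X$, an $\alpha$-measure bound of $C \epsilon^k$ on the corresponding bad parameters. Covering $X$ by $O(\epsilon^{-s})$ balls of radius $\epsilon$ for $s$ slightly above $\udim X$ and summing over pairs yields a total bad-parameter measure $O(\epsilon^{k-2s})$, which vanishes as $\epsilon \to 0$ whenever $k > 2 \udim X$.

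For the embedding statement, the same outline applies to $\Phi(x,y,\alpha) = \phi_{h_0 + \alpha}(x) - \phi_{h_0 + \alpha}(y)$ on $(X \times X) \setminus \Delta$, with bad target $\{0\} \subset \R^k$, again giving codimension-$k$ bad parameter sets per pair. Pairs with $|x - y|$ bounded below cause no trouble. The genuine obstacle is pairs close to the diagonal: as $y \to x$ the Lipschitz transport constant blows up on the scale of $|x-y|$, and the linear part of $\Phi$ can degenerate if some orbit points collide. Here the hypothesis $2\udim \{ x \in X : T^p x = x\} < p$ for $p = 1, \ldots, k-1$ becomes essential: it confines points whose short forward orbits can nearly coincide to a small-dimensional exceptional set. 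After a rescaling that exploits injectivity and Lipschitzness of $T$ to keep orbit separations comparable to $|x-y|$, the covering estimate absorbs these pairs as well.

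Finally, continuity of the prediction map $S_h$ on the compact set $\phi_h(X)$ follows from compactness of $X$: given $\phi_h(x_n) \to \phi_h(x)$, extract a subsequence $x_{n_j} \to x^*$; then $\phi_h(x^*) = \phi_h(x)$, whence by predictability $\phi_h(Tx^*) = \phi_h(Tx)$, and continuity of $\phi_h \circ T$ gives $\phi_h(Tx_{n_j}) \to \phi_h(Tx)$. A standard ``every subsequence has a further subsequence'' argument upgrades this to convergence of the full sequence. The principal difficulty in the whole argument is the near-diagonal analysis for the embedding: the polynomial (rather than merely linear) probe set together with the periodic-orbit hypothesis are precisely what allow the transversality-covering argument to close uniformly in the scale $|x-y|$.
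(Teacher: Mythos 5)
A preliminary remark: the paper does not prove Theorem~\ref{thm:predict_determ} at all -- it quotes the first part from \cite[Theorem~1.16]{BGSPredict} and the second from \cite{SYC91} (in the formulation of \cite[Theorem~4.5]{Rob11}) -- so your plan has to be measured against those proofs. Your overall route (prevalence via a polynomial probe set, quantitative transversality of the map $\alpha \mapsto D_{x,y}\alpha + w_{x,y}$, and a box-counting covering of $X \times X$ with exponent count $k - 2s$) is indeed the Sauer--Yorke--Casdagli scheme those references use, and your compactness argument for continuity of $S_h$ is correct (this is exactly \cite[Proposition~1.9]{BGSPredict}: for continuous $T$, $h$ on compact $X$, predictability automatically yields a continuous prediction map).

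The genuine gap is in the prediction half. Your per-pair bound $C\eps^k$ rests on surjectivity (with a quantitative lower bound on the relevant singular value) of the linear part of $\alpha \mapsto \Psi(x,y,\alpha)$, and you only establish it ``for a fixed $(x,y)$ whose orbit points $T^i x, T^i y$, $i=0,\dots,k$, are pairwise distinct''; the interpolation estimate behind it (Lemma~\ref{lem:SY97}-type bounds) degenerates as cross-orbit or intra-orbit separations shrink. For the embedding statement you may, and do, invoke the hypothesis $2\udim \Per_p(T) < p$ to control such configurations, but the prediction statement carries \emph{no} hypothesis on periodic or preperiodic points, so pairs $(x,y)$ at or near collision configurations (e.g.\ $x,y$ near a fixed point, $x$ preperiodic, or $T^i x = T^j y$ with $i \neq j$) cannot be discarded: there the codimension of the bad event drops and the covering sum $O(\eps^{-2s})\cdot C\eps^{k}$ no longer closes. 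Handling these configurations is precisely the nontrivial content of the cited proof of \cite[Theorem~1.16]{BGSPredict} (one must exploit that coincidence patterns which degenerate the transversality simultaneously force part of the conclusion $h(T^k x)=h(T^k y)$, and organize the pairs by collision pattern), and your plan is silent on it -- indeed you assert that ``the principal difficulty in the whole argument is the near-diagonal analysis for the embedding'', which mislocates the hard point. The embedding half is also only gestured at (``a rescaling \dots keeps orbit separations comparable to $|x-y|$'' is not what happens near low-period periodic orbits; that is exactly where the dimension hypothesis on $\Per_p(T)$ must enter, as in \cite{SYC91}), but as a plan it points in the right direction; the prediction half as written does not.
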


The first part of the theorem is \cite[Theorem~1.16]{BGSPredict}, while the second one comes from \cite{SYC91} (see \cite[Theorem~4.5]{Rob11} for the above formulation). In fact, the results hold with greater generality than above, see \cite{SYC91, Rob11, BGSPredict} for details. Theorem~\ref{thm:predict_determ} and other mentioned results show that a threshold for the delay dimension that is sufficient for a reliable prediction and reconstruction of the system is roughly equal to twice the dimension of the phase space, which agrees with the well-known Menger--N\"obeling and Whitney theorems on embedding, respectively, topological spaces and smooth manifolds into Euclidean spaces.

\subsection{Time-delay measurements from a probabilistic point of view} 

It turns out that the minimal number of measurements required for a reliable prediction or reconstruction of the system can be reduced (at least) by half in a probabilistic setting, when an observer is interested only in the `almost sure' behaviour of the system. Mathematically speaking, this means that one assumes $X$ to be endowed with a probability measure, studying trajectories of almost all points $x \in X$. The relevance of this approach was conjectured by Schroer, Sauer, Ott and Yorke in \cite{SSOY98}. In a series of our previous papers \cite{BGS20, BGS22, BGSPredict} we developed a theory of almost sure time-delay prediction and embedding for (locally) Lipschitz systems with a Borel probability measure on compact or Borel sets in Euclidean spaces. Within this approach, the analogues of the notions of the time-delay predictability and time-delay embedding are the following.

\begin{defn}\label{eq:almost-sure-embedding} Let $X \subset \R^N$, $N \in \N$, be a Borel set, let $\mu$ be a Borel probability measure on $X$ and let $T\colon X \to X$ be a Borel transformation. We say that a Borel observable $h\colon X \to \R$ is \emph{almost surely $k$-predictable}, if for the $k$-delay coordinate map $\phi_h$, there exists a Borel set $X_h \subset X$ of full $\mu$-measure and a map $S_h\colon\phi_h(X_h) \to \phi_h(X)$ (\emph{prediction map}) such that the diagram
\begin{equation}\label{eq:comm_diagram2}
\begin{CD}
X_h @>T>> X\\
@VV\phi_h V @VV \phi_h V\\
\phi_h(X_h) @>S_h>> \phi_h(X)
\end{CD}
\end{equation}
commutes. 
\end{defn}

\begin{rem} If, additionally, $T$ is continuous and $h$ is an almost surely $k$-predictable continuous observable, then the set $X_h$ can be chosen such that the set $\phi_h(X_h)$ and the prediction map $S_h$ are Borel (see \cite[Proposition~1.13]{BGSPredict}).
\end{rem}

\begin{defn}
We say that the $k$-delay coordinate map $\phi_h$ is \emph{almost surely injective}, if it is injective on a full $\mu$-measure Borel set $X_h \subset X$. Then the commuting diagram \eqref{eq:comm_diagram2} exists, with $S_h = \phi_h\circ T \circ (\phi_h|_{X_h})^{-1}$.

If $h$ is almost surely $k$-predictable on a full-measure Borel set $X_h \subset X$, which is $T$-invariant, i.e.~$T(X_h) \subset X_h$, then the diagram \eqref{eq:comm_diagram2} has the form
\begin{equation}\label{eq:comm_diagram3}
\begin{CD}
X_h @>T>> X_h\\
@VV\phi_h V @VV \phi_h V\\
\phi_h(X_h) @>S_h>> \phi_h(X_h)
\end{CD},
\end{equation} 
which provides a semi-conjugation between the system $(X,T)$ restricted to a full-measure subset of the phase space and its model in $\R^k$. Similarly, if $\phi_h$ 
is almost surely injective on a $T$-invariant full-measure Borel set $X_h \subset X$, then the diagram \eqref{eq:comm_diagram3} provides a measurable isomorphism between the system $(X_h, \mu, T|_{X_h})$ and its \emph{measurable model} $(\phi_h(X_h),\phi_h\mu, S_h)$ in $\R^k$, where $\phi_h\mu$ denotes the push-forward of $\mu$ under the map $\phi_h$ (see Subsection~\ref{subsec:meas}). In the case when $\mu$ is additionally $T$-invariant, the map $\phi_h$ becomes an isomorphism in the category of measure-preserving transformations, hence we call it a \emph{$k$-delay measure-preserving-transformations isomorphism} (in short, \emph{$k$-delay mpt isomorphism}).
\end{defn}

In \cite[Theorem~1.2 and Remark~1.3]{BGS20} and \cite[Theorem~1.18]{BGSPredict} we proved the following.

\begin{thm}[{\bf Probabilistic time-delay prediction and embedding theorem}{}]\label{thm:predict_prob}
Let $X \subset \R^N$, $N \in \N$, be a compact set, let $\mu$ be a Borel probability measure on $X$ and let $T\colon X \to X$ be a Lipschitz map. 
Then the following hold.
\begin{enumerate}[$($a$)$] 
\item A prevalent Lipschitz observable $h \colon X \to \R$ is almost surely $k$-predictable for every $k > \hdim \mu$. Furthermore, if $k > \hdim(\supp \mu)$, then the set $X_h$ can be chosen such that the prediction map $S_h$ is continuous. If, additionally, $\mu$ is $T$-invariant, then $X_h$ can be chosen  to be $T$-invariant.
\item 
If $T$ is injective and $\hdim ( \mu|_{\{ x \in X : T^p x = x \}})< p$ for $p=1, \ldots, k-1$, where $k > \hdim \mu$, then the $k$-delay coordinate map $\phi_h$ is almost surely injective for a prevalent Lipschitz observable $h \colon X \to \R$. 
If, additionally, $\mu$ is $T$-invariant, then $X_h$ can be chosen to be $T$-invariant and, consequently, $\phi_h$ is a $k$-delay mpt isomorphism. Furthermore, if $\mu$ is $T$-invariant and ergodic, then the assumption $\hdim ( \mu|_{\{ x \in X : T^p x = x \}})< p$ for $p=1, \ldots, k-1$ may be omitted.
\end{enumerate}
\end{thm}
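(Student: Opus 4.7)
The plan is to apply the polynomial probe method of Hunt--Sauer--Yorke prevalence theory, following the strategy of \cite{SYC91} adapted to the measure-theoretic regime. Fix an arbitrary Lipschitz observable $h_0\colon X\to\R$ and let $\Pi$ denote the finite-dimensional space of polynomial maps $\R^N\to\R$ of some sufficiently large degree $D$. Writing $h := h_0 + \pi$ with $\pi\in\Pi$, establishing prevalence reduces to proving the desired conclusion for Lebesgue-a.e.\ $\pi\in\Pi$.

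For part (a), predictability fails on a pair $(x,y)$ exactly when $\phi_h(x)=\phi_h(y)$ but $h(T^k x)\ne h(T^k y)$. The $k$ equations $\phi_h(x)=\phi_h(y)$ are affine in $\pi$: $\pi(T^i x)-\pi(T^i y) = h_0(T^i y)-h_0(T^i x)$ for $i=0,\dots,k-1$. Provided $\Pi$ has degree large enough to interpolate any $2k$-point configuration in $\R^N$, these $k$ functionals are linearly independent whenever the orbit points $T^i x, T^i y$ are pairwise distinct, yielding a quantitative transversality estimate of the form
\begin{equation*}
\Leb_\Pi\bigl(\{\pi:\|\phi_h(x)-\phi_h(y)\|<\eps\}\bigr) \;\lesssim\; \eps^k\,\Psi(x,y),
\end{equation*}
where $\Psi(x,y)$ is a geometric factor encoding the non-degeneracy of the orbit configuration. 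The essential dimension-theoretic step is then to exchange the order of integration and use a dyadic scale decomposition $\eps=2^{-n}$, combined with the fact that $\mu$ is supported on a Borel set of Hausdorff dimension strictly below $k$, to produce a summable bound on the $\mu\times\mu$-measure of the approximate failure sets. A Borel--Cantelli argument then yields a $\mu\times\mu$-null failure set for Lebesgue-a.e.\ $\pi$, which is precisely almost sure $k$-predictability.

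For part (b), one applies the analogous argument to the failure set $\{(x,y):x\ne y,\ \phi_h(x)=\phi_h(y)\}$. The extra subtlety is that collisions $T^i x = T^j y$ may occur, destroying the codimension-$k$ transversality; controlling these requires the hypothesis $\hdim(\mu|_{\{T^p x=x\}})<p$ for $p=1,\dots,k-1$, which bounds the measure-theoretic size of periodic orbits relative to their dynamical period. In the $T$-invariant ergodic case, $\{T^p x=x\}$ is $T$-invariant, and ergodicity forces it to be either $\mu$-null or $\mu$-full; in the latter case $\mu$ is concentrated on a finite union of periodic orbits with $\hdim\mu=0$, so the dimension hypothesis is automatic. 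To upgrade $X_h$ to a $T$-invariant set when $\mu$ is $T$-invariant, replace it by $\bigcap_{n\ge 0}T^{-n}X_h$, which retains full measure. The continuity of $S_h$ when $k>\hdim(\supp\mu)$ is obtained by restricting to a compact subset of $\supp\mu$ of nearly-full measure on which $\phi_h$ is bi-H\"older, and extending $S_h$ continuously.

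The main obstacle I expect is the quantitative transversality estimate and the matching of exponents in the resulting integral over $\mu\times\mu$. In the delay-coordinate setting the $2k$ orbit points $T^i x, T^i y$ are correlated by the Lipschitz dynamics, so the geometric factor $\Psi(x,y)$ can degrade when the orbits cluster or nearly coincide, even with $x\ne y$. Stratifying the failure set according to the coincidence pattern of the orbit segments and handling each stratum separately, so that the final bound is compatible with the sharp hypothesis $\hdim\mu<k$ rather than with some strictly larger threshold, is the delicate algebraic--geometric point at the heart of the argument.
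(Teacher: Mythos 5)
The decisive step in your sketch is the claim that a $\mu\times\mu$-null failure set ``is precisely almost sure $k$-predictability'' (and, in part (b), the analogous claim for almost sure injectivity). That equivalence is false, and it is exactly the point where the probabilistic statement of Theorem~\ref{thm:predict_prob} differs from the deterministic Sauer--Yorke--Casdagli scheme you are adapting. Almost sure predictability/injectivity requires a \emph{single} full-measure Borel set $X_h$ such that no bad pair lies in $X_h\times X_h$, and the product-nullity of the bad set does not produce such a set. Concretely, take $X=[0,1]$, $\mu$ Lebesgue, and a map $\phi$ with $\phi(x)=\phi(1-x)$ (e.g.\ $\phi(x)=|x-\tfrac12|$): the set of pairs $x\neq y$ with $\phi(x)=\phi(y)$ is the graph of the involution $x\mapsto 1-x$, hence $\mu\times\mu$-null, yet every full-measure set $X_h$ contains a full-measure family of pairs $(x,1-x)$, so $\phi$ is injective on no full-measure set; the conditional measure over a fiber $\{\tfrac12-y,\tfrac12+y\}$ is $\tfrac12(\delta_{1/2-y}+\delta_{1/2+y})$, not Dirac. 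The same phenomenon defeats your predictability argument: the failure set can be the graph of a measure-preserving involution, product-null but unavoidable by any full-measure $X_h$. So your Fubini/Borel--Cantelli route, even granting the transversality estimate $\Leb_\Pi(\{\pi:\|\phi_h(x)-\phi_h(y)\|<\eps\})\lesssim\eps^k\Psi(x,y)$ and the summability bookkeeping, proves a strictly weaker statement than the theorem.

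Note also that the paper does not prove Theorem~\ref{thm:predict_prob} at all; it is quoted from \cite{BGS20} and \cite{BGSPredict}, where the proofs proceed through Rokhlin disintegration: one shows that for a prevalent $h$ the conditional measures $\mu_{h,\phi_h(x)}$ are Dirac at $x$ for $\mu$-almost every $x$ (injectivity), respectively that $\phi_h\circ T(\mu_{h,\phi_h(x)})$ is Dirac (predictability) --- exactly the criterion recorded in Lemma~\ref{lem: cond meas iff} of the present paper. The quantitative input there is an estimate, after integrating over the polynomial probe parameters, of the relative mass that $\mu$ assigns inside a fiber-neighbourhood $\phi_\alpha^{-1}(B(\phi_\alpha(x),\eps))$ to points far from $x$ (resp.\ whose image under $\phi_\alpha\circ T$ is far from $\phi_\alpha(Tx)$); this is where $\hdim\mu<k$ enters and it is not recoverable from your pairwise bound by Fubini alone. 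Your peripheral remarks are mostly fine (replacing $X_h$ by $\bigcap_{n\ge0}T^{-n}X_h$ in the invariant case, and the ergodic reduction in which a positive-measure periodic set forces $\mu$ to sit on a periodic orbit with $\hdim\mu=0$), though the claim that continuity of $S_h$ for $k>\hdim(\supp\mu)$ follows from $\phi_h$ being ``bi-H\"older'' on a large compact set presupposes an injectivity that part (a) does not provide. The core reduction, however, must be replaced by the conditional-measure argument.
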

Here and in the sequel the symbol $\supp$ denotes the topological support of the measure (see Subsection~\ref{subsec:meas}). Theorem~\ref{thm:predict_prob} shows that the threshold for the delay dimension that is sufficient for an almost sure prediction and reconstruction of the system is roughly equal to the dimension of the phase space or a given measure, which reduces (at least) by half the number of required measurements, compared to the deterministic setup.

\subsection{New results}
The main objective of this paper is to study the case of (too) few measurements ("undersampling"), mainly in the probabilistic setting, which roughly means $k < \dim_H \mu$. For such values of $k$ one should not expect a faithful $k$-delay prediction or reconstruction of the system. Our results confirm this intuition in a rigorous way, yielding a natural theoretical limitation for these time-delay measurement procedures. To our knowledge, these are the first known results showing lower bounds for the minimal delay dimension needed for a faithful prediction or reconstruction of the system. We should emphasize that the injectivity of a single  delay coordinates map or predictability of a single observable may hold even in the case of few measurements. Our results exclude such possibility for a typical (prevalent) observable. 

The first result concerns the setting of Theorem~\ref{thm:predict_prob} and shows that for Lipschitz systems on compact spaces with an invariant probability measure $\mu$, the Hausdorff dimension of $\mu$ is the precise threshold for the minimal delay dimension needed for system reconstruction and prediction procedures for typical Lipschitz observables in the probabilistic setting. To state the result, let us denote the sets of periodic and pre-periodic points as
\begin{align*}
\Per_p(T) &= \{ x \in X : T^p x = x \},\\
\Prep_p(T) &= \{ x \in X : T^p x \in \{ x, Tx, \ldots, T^{p-1} x \}\}
\end{align*}
for $p \in \N$.

\begin{thm}\label{thm:excluding embed and predict}
Let $X \subset \R^N$, $N \in \N$, be a compact set, let $\mu$ be a Borel probability measure on $X$ and let $T\colon X \to X$ be a Lipschitz map. Then the following hold.
\begin{enumerate}[$($a$)$]
\item\label{it:prob non injective} If $k < \hdim T^{k-1}\mu$, then for a prevalent Lipschitz observable $h \colon X \to \R$, the $k$-delay coordinate map $\phi_h$ is not almost surely injective. If, additionally, $\mu$ is $T$-invariant or $T$ is bi-Lipschitz onto its image, then the assumption $k < \hdim T^{k-1}\mu$ may be replaced by $k < \hdim \mu$.
\item\label{it:prob non predict} If $k < \hdim T^k (\mu |_{ X \setminus \Prep_k (T)})$, then a prevalent Lipschitz observable $h \colon X \to \R$ is not almost surely $k$-predictable. If, additionally, $T$ is bi-Lipschitz onto its image or $\mu$ is $T$-invariant, then the assumption $k < \hdim T^k (\mu |_{ X \setminus \Prep_k (T)})$ may be replaced by $k < \hdim \mu |_{ X \setminus \bigcup_{p=1}^k \Per_p (T)}$. Furthermore, if  $\mu$ is $T$-invariant and ergodic, then it suffices to assume $k < \hdim \mu$.
\end{enumerate}
\end{thm}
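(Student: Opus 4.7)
The plan is to reduce both parts to a dimension estimate on the conditional measures of $\mu$ along the fibers of the delay coordinate map $\phi_h\colon X \to \R^k$. Writing $\mu = \int \mu_y \, d(\phi_h\mu)(y)$ with $\mu_y$ supported on $\phi_h^{-1}(y)$, the key input will be the general conditional dimension formula for time-delay coordinate maps promised in the abstract. I will use that for a prevalent Lipschitz observable $h$ and $\phi_h\mu$-almost every $y$,
\[
\hdim T^{k-1}\mu_y \;\geq\; \hdim T^{k-1}\mu - k
\qquad \text{and} \qquad
\hdim T^{k}\mu_y \;\geq\; \hdim T^{k}\mu - k,
\]
together with the analogous inequality for $\mu' := \mu|_{X \setminus \Prep_k(T)}$ in place of $\mu$.

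For part~(a), I would argue by contradiction: if $\phi_h$ were almost surely injective, then $\mu_y$ would be a Dirac mass for $\phi_h\mu$-a.e.\ $y$, forcing $\hdim \mu_y = 0$ and (since $T^{k-1}$ is Lipschitz, hence non-dimension-increasing on measures) also $\hdim T^{k-1}\mu_y = 0$. This contradicts the conditional dimension bound under the hypothesis $k < \hdim T^{k-1}\mu$. The strengthenings follow since $\hdim T^{k-1}\mu = \hdim \mu$ whenever $T$ is bi-Lipschitz onto its image, and $T^{k-1}\mu = \mu$ whenever $\mu$ is $T$-invariant.

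For part~(b), I would first observe that the first $k-1$ coordinates of $\phi_h(Tx)$ coincide with the last $k-1$ coordinates of $\phi_h(x)$; thus whenever $\phi_h(x) = \phi_h(x')$ one has $\phi_h(Tx) = \phi_h(Tx')$ if and only if $h(T^k x) = h(T^k x')$. Consequently, almost sure $k$-predictability is equivalent to $h \circ T^k$ being $\mu_y$-a.s.\ constant for $\phi_h\mu$-a.e.\ $y$. Writing $\mu'_y$ for the conditional measures of $\mu'$, I would apply a Marstrand-type projection estimate for prevalent Lipschitz observables (developed jointly with the conditional dimension formula) to conclude that $(h \circ T^k)_*\mu'_y = h_*(T^k \mu'_y)$ has Hausdorff dimension $\min(1, \hdim T^k \mu'_y)$ for $\phi_h\mu'$-a.e.\ $y$. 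Under the hypothesis $k < \hdim T^k \mu'$, the conditional dimension formula gives $\hdim T^k \mu'_y > 0$, so this pushforward is nondegenerate---in particular not a point mass---contradicting constancy of $h \circ T^k$ on a.e.\ fiber. The strengthenings follow as in~(a); additionally, for $T$-invariant $\mu$ the transient set $\Prep_k(T) \setminus \bigcup_{p=1}^k \Per_p(T)$ is $\mu$-null by Poincar\'e recurrence, and in the ergodic case either $\mu(\bigcup_{p=1}^k \Per_p(T)) = 0$, reducing to the previous sub-case, or $\mu$ is atomic with $\hdim \mu = 0$, rendering the hypothesis $k < \hdim \mu$ vacuous.

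The hard part will be the conditional dimension formula itself: the fiber measures $\mu_y$ depend only measurably on $y$, and one must control the $\phi_h\mu$-null set of exceptional fibers uniformly across a prevalence-full family of observables $h$. I expect this to require a Fubini-type estimate combined with a Fourier or potential-theoretic argument in the spirit of Marstrand--Mattila projection theorems, adapted to exploit the special time-delay structure of $\phi_h$ (in particular the fact that successive coordinates are related by iterates of $T$). Handling the pre-periodic/periodic decomposition in part~(b) with sufficient care to transfer estimates between $T^k\mu'$ and $\mu|_{X\setminus\bigcup_p \Per_p(T)}$ will be a secondary but delicate technical point.
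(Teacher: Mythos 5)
Your reduction is essentially the paper's own proof: the paper derives both parts from lower bounds on the dimensions of the conditional measures $\mu_{h,y}$ and of $\phi_h \circ T(\mu_{h,y})$ (which on a fiber is the same as $(h\circ T^k)_*\mu_{h,y}$ up to a fixed affine embedding), proved by exactly the transversality/energy Marstrand--Mattila-type argument you defer to, and then concludes via the Dirac-measure characterizations of almost sure injectivity and predictability, with the same bi-Lipschitz/invariant/ergodic reductions (including $T^k(\mu|_{X\setminus\Prep_k(T)})=\mu|_{X\setminus\bigcup_{p=1}^k\Per_p(T)}$ in the invariant case and the periodic-orbit alternative in the ergodic case). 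The only discrepancy is cosmetic: the paper's conditional-dimension theorems give the bound for the upper Hausdorff dimension only on a set of positive $\mu$-measure and with an $\eps$ loss (the almost-everywhere statement holds for $\lhdim$), not $\phi_h\mu$-almost everywhere as you posit, but positivity on a positive-measure set is all your contradiction arguments actually use.
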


\begin{rem}
Note that in the general case of a non-invariant measure $\mu$, in Theorem~\ref{thm:excluding embed and predict} it is not enough to assume $k < \hdim \mu$ in order to exclude the almost sure injectivity of $\phi_h$ and almost sure $k$-predictability of $h$, see examples in Section~\ref{sec:iterate examples}. 
\end{rem}

\begin{rem}\label{rem:assum preper} It is easy to see that some restrictions on the size of the set of preperiodic points $\Prep_k (T)$ in Theorem~\ref{thm:excluding embed and predict} are necessary. A trivial example is when $T$ is the identity -- in this case every observable is predictable. More generally, if there exist $p,k \in \N$, $p \le k-1$, such that $T^px = T^kx$ for every (resp.~$\mu$-almost every) $x \in X$, then every observable is $k$-predictable (resp.~$\mu$-almost predictable). This holds, e.g.,~for rational rotations on a $d$-dimensional torus or for $X = [0,1]$, $Tx = |x - 1/2|$.  
\end{rem}

Combining Theorems \ref{thm:predict_prob} and \ref{thm:excluding embed and predict} we see that $\hdim \mu$ is the precise threshold for the minimal delay (embedding) dimension required for both almost sure reconstruction and prediction of a system. In practical tasks the dimension of the system is often unknown and one is faced with a challenge of estimating the embedding dimension from a time series. Several algorithms aiming at this and similar tasks were introduced, e.g. the false nearest neighbour algorithm and its modifications \cite{KBA92, CenysPyragas88, LiebertPawelzikSchuster91, Dlotko_et_al} (see also \cite{Abarbanel_book} and \cite[Section 1.5]{BGSPredict} for a more detailed discussion). It is an interesting problem to study rigorously the behaviour of such algorithms and its relation to the presented results.

Theorem~\ref{thm:excluding embed and predict} implies the following result concerning predictability and embedding limitation in the deterministic setting of Theorem~\ref{thm:predict_determ}. Note that unlike in the probabilistic case, we do not obtain the same threshold for the minimal delay dimension as the one which appears in Theorem~\ref{thm:predict_determ} (i.e.~$2\udim X$). 

\begin{thm}\label{thm:excluding embed and predict determ}
Let $X \subset \R^N$, $N \in \N$, be a compact set and let $T\colon X \to X$ be a Lipschitz map. Then the following hold.
\begin{enumerate}[$($a$)$]
\item \label{it:determ non injective} If $k < \hdim T^{k-1}(X)$, then for a prevalent Lipschitz observable $h \colon X \to \R$, the $k$-delay coordinate map $\phi_h$ is not injective.
\item \label{it:determ non predict} If $k < \hdim\big( T^k(X) \setminus \bigcup_{p=1}^k\Per_p(T) \big)$, then a prevalent Lipschitz observable $h \colon X \to \R$ is not $k$-predictable.
\end{enumerate}
\end{thm}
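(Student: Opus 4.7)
The plan is to deduce both parts from the probabilistic Theorem~\ref{thm:excluding embed and predict} by constructing, for each situation, a Borel probability measure $\mu$ on $X$ whose relevant pushforward has Hausdorff dimension greater than $k$. Once such $\mu$ is available, the conclusion passes from the $\mu$-almost sure statements to the deterministic ones: indeed, $\phi_h$ being injective on all of $X$ would make it injective on any full $\mu$-measure set, and $h$ being $k$-predictable (with $S_h$ defined on all of $\phi_h(X)$) is a strictly stronger property than being almost surely $k$-predictable with $X_h = X$.

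For part (\ref{it:determ non injective}), I would apply Frostman's lemma to the compact set $Y := T^{k-1}(X)$, which has $\hdim Y > k$, obtaining a Borel probability measure $\nu$ on $Y$ with $\hdim \nu > k$. Because $T^{k-1} \colon X \to Y$ is a continuous surjection of compact metric spaces, the Kuratowski--Ryll-Nardzewski selection theorem yields a Borel section $s \colon Y \to X$ with $T^{k-1} \circ s = \id_Y$. Then $\mu := s_*\nu$ satisfies $T^{k-1}_*\mu = \nu$, whence $\hdim T^{k-1}\mu > k$, and Theorem~\ref{thm:excluding embed and predict}(\ref{it:prob non injective}) finishes the job.

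For part (\ref{it:determ non predict}), I first need the set-theoretic inclusion
\[
T^k(X) \setminus \bigcup_{p=1}^k \Per_p(T) \;\subseteq\; T^k\bigl(X \setminus \Prep_k(T)\bigr),
\]
proved by contradiction: if $y = T^k x$ with $x \in \Prep_k(T)$, then $T^k x = T^j x$ for some $0 \le j \le k-1$, and therefore $T^{k-j}y = y$ with $1 \le k-j \le k$, forcing $y \in \bigcup_{p=1}^k \Per_p(T)$. Since $\Prep_k(T) = \bigcup_{j=0}^{k-1}\{x : T^k x = T^j x\}$ is a finite union of closed equalizers, it is closed; hence $X \setminus \Prep_k(T)$ is open and its image $T^k(X \setminus \Prep_k(T))$ is an analytic subset of $\R^N$ of Hausdorff dimension exceeding $k$. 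Frostman's lemma in its analytic-set form then furnishes a Borel probability measure $\nu$ on this image with $\hdim \nu > k$, and a universally measurable section of $T^k|_{X \setminus \Prep_k(T)}$, supplied by the Jankov--von Neumann uniformization theorem, lifts $\nu$ to a Borel probability measure $\mu$ on $X$ supported in $X \setminus \Prep_k(T)$ with $T^k_*\mu = \nu$. Since $\mu|_{X \setminus \Prep_k(T)} = \mu$, Theorem~\ref{thm:excluding embed and predict}(\ref{it:prob non predict}) applies and concludes the proof.

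The main technical point is the lifting step in (\ref{it:determ non predict}): because $T^k(X \setminus \Prep_k(T))$ need not be Borel, one is forced to combine Frostman's lemma for analytic sets with a selection that is only universally measurable (rather than Borel), while still verifying that $s_*\nu$ is a bona fide Borel probability measure on $X$. Everything else amounts to translating the probabilistic conclusions of Theorem~\ref{thm:excluding embed and predict} -- applied in its general form that does not require $T$-invariance of $\mu$ or the bi-Lipschitz hypothesis -- into the deterministic language via the elementary implications noted in the first paragraph.
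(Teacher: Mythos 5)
Your proof is correct and follows essentially the same route as the paper: Frostman's lemma combined with a measurable partial inverse of $T^{k-1}$ (resp.\ $T^k$) produces a Borel probability measure whose relevant pushforward has Hausdorff dimension exceeding $k$, and Theorem~\ref{thm:excluding embed and predict} then excludes injectivity and predictability, since the deterministic properties trivially imply their almost-sure counterparts for any measure. The only (cosmetic) difference is in (\ref{it:determ non predict}), where the paper places the Frostman measure on $T^k(X)\setminus\bigcup_{p=1}^k\Per_p(T)$ and uses $\bigcup_{p=1}^k\Per_p(T)=T^k(\Prep_k(T))$; moreover, since $X\setminus\Prep_k(T)$ is open in the compact set $X$, its image under $T^k$ is $\sigma$-compact and hence Borel, so the ordinary Frostman lemma and the Kuratowski--Ryll-Nardzewski selection theorem already suffice and the analytic-set Frostman lemma and Jankov--von Neumann uniformization you invoke are unnecessary.
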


The proofs of Theorems~\ref{thm:excluding embed and predict} and~\ref{thm:excluding embed and predict determ} are presented in Section~\ref{sec:cond measures imply}.

The subsequent results of the paper are related to a conjecture of Schroer, Sauer, Ott and Yorke from \cite{SSOY98} concerning error bounds in time-delay prediction procedures. To formulate the conjecture, let $X \subset \R^N$, $N \in \N$ be a Borel set, let $\mu$ be a Borel probability measure on $X$ and let $T \colon X \to X$ be a Borel transformation. For a Borel observable $h \colon X \to \R$, $y \in \supp \phi_h\mu$ and $\eps>0$, define
	\begin{align*}
		\chi_{h,\eps}(y) &=  \frac{1}{\mu(\phi_h^{-1}(B(y, \eps)))} \int_{\phi_h^{-1}(B(y, \eps))} \phi_h\circ T \: d\mu,\\
		\sigma_{h,\eps}(y) &= 
		\bigg(\frac{1}{\mu(\phi_h^{-1}(B(y, \eps)))} \int_{\phi_h^{-1}(B(y, \eps))} \|\phi_h \circ T -\chi_{h,\eps}(y)\|^2d\mu\bigg)^{\frac{1}{2}}
	\end{align*}
(provided the integrals exist). 

\begin{rem}\label{rem:two kinds of predict}
In \cite{SSOY98}, the probabilistic notion of predictability was introduced in another way, by defining $k$-\emph{predictable points} as points $y \in \R^k$ for which the \emph{prediction error}
\[
\sigma_h(y) = \lim_{\eps \to 0}\sigma_{h,\eps}(y)
\]
exists and is equal to $0$. In \cite[Theorem~1.14]{BGSPredict} we showed that for continuous systems on compact spaces, a continuous observable $h$ is almost surely $k$-predictable with respect to $\mu$ if and only if $\phi_h\mu$-almost every point $y \in \R^k$ is $k$-predictable (see also Lemma \ref{lem: cond meas iff} below). Hence, in the setup considered in this paper, the two notions of almost sure predictability coincide.
\end{rem}

\begin{rem} The almost sure predictability is closely related to the behaviour of the Farmer--Sidorowich prediction algorithm \cite{FarmerSidorowich87} (see \cite[Proposition~1.3]{BGSPredict}). More precisely, whenever almost sure $k$-predictability holds with respect to an ergodic measure $\mu$, it implies an almost sure convergence of the suitable Farmer--Sidorowich algorithm. Refer to \cite[Corollary~1.15]{BGSPredict} for details. 
\end{rem}

In \cite{SSOY98}, Schroer, Sauer, Ott and Yorke conjectured \cite[Conjecture 2]{SSOY98} a decay rate of the prediction errors $\sigma_{h,\eps}(y)$ for typical observables.  The conjecture was stated for a special class of invariant measures on attractors for smooth diffeomorphisms on Riemannian manifolds -- so called \emph{natural measures}, see Definition \ref{defn:nat_measure}.

\begin{SSOYconjecture2}\label{con:2}
	Let $T\colon M \to M$ be a smooth diffeomorphism of a compact Riemannian manifold $M$ with a compact $T$-invariant attractor $X \subset M$ and a natural measure $\mu$ on $X$ of information dimension $\idim(\mu) = D$. Fix a generic observable $h\colon M \to \R$, $k \in \N$ and a small $\delta > 0$. Then for the $k$-delay coordinate map $\phi_h$ corresponding to  $h$ and sufficiently small $\eps>0$, the following hold.
	\begin{enumerate}[$($i$)$]
		\item\label{it: ssoy2 i} If $k < D$, then 
		\[
		\mu(\{x \in X: \sigma_{h,\eps}(\phi_h(x)) > \delta\}) \geq C \quad \text{ for some } C > 0.
		\]
		
		\item\label{it: ssoy2 ii} If $D < k < 2D$ and $\phi_h$ is not injective, then 
		\[
		C_1  \eps^{k-D} \le \mu(\{x \in X: \sigma_{h,\eps}(\phi_h(x)) > \delta\}) \le C_2 \eps^{k-D} \quad \text{ for some } C_1, C_2 > 0.
		\]
		\item\label{it: ssoy2 ii injective} If $D < k < 2D$ and $\phi_h$ is injective, then 
		\[
		\mu(\{x \in X: \sigma_{h,\eps}(\phi_h(x)) > \delta\}) = 0.
		\]
		\item\label{it: ssoy2 iii} If $k > 2D$, then 
		\[
		\mu(\{x \in X: \sigma_{h,\eps}(\phi_h(x)) > \delta\}) = 0.
		\]
	\end{enumerate}
\end{SSOYconjecture2}

Note that the first conjecture by Schroer, Sauer, Ott and Yorke (\cite[Conjecture~1]{SSOY98}, concerning the possibility of generic almost sure prediction in the case $k > D$, was settled in our previous papers \cite[Corollaries~1.9--1.10, Theorem~1.11]{BGS22} and \cite[Theorem~1.4]{BGSPredict} (see also Theorem~\ref{thm:predict_prob}).

\begin{rem}\label{rem: SSOY doubts}
In the original formulation of the prediction error conjecture in \cite{SSOY98}, as well as in the statement presented above, not all the details are precisely specified, which introduces the need for some interpretations. This includes the issue of the regularity class of the considered observables as well as the suitable notion of genericity. Following the approach used in \cite{SYC91}, we consider observables within the class of Lipschitz maps and prevalence as the notion of genericity, but  it should be noted that the obtained results apply also to the classes $C^r$, $r =1,\ldots, \infty$ of observables (see Definition~\ref{defn:preval} and the discussion afterwards). Another issue is the interdependence between $h$ and $\delta$ in the formulation of the conjecture, which has a non-trivial impact on the existence of the postulated lower bounds, see Remark~\ref{rem:order of quant} and Theorem~\ref{thm:counterexample}.
\end{rem}

In our previous paper \cite{BGSPredict}, we studied the upper bounds for the prediction error probability in the Schroer--Sauer--Ott--Yorke prediction error conjecture. We proved that the upper bound in assertion~\ref{it: ssoy2 ii} holds true (in a slightly weaker form) if we replace the information dimension with the upper box counting dimension of $\supp \mu$, i.e.~for $D = \udim (\supp\mu)$. Furthermore, assertion~\ref{it: ssoy2 iii} holds for $D = \ldim_B (\supp\mu)$ (hence also for $D = \udim (\supp\mu)$), while \ref{it: ssoy2 ii injective} is true whenever $\phi_h$ is injective (regardless of the dimension of the phase space). In fact, the results extend to a much broader class of Lipschitz systems on compact spaces $X \subset\R^N$ and arbitrary Borel probability measures $\mu$, see \cite[Theorem~1.6]{BGSPredict} for details. We  also provided examples \cite[Proposition~8.3]{BGSPredict} showing that the box counting dimensions cannot be replaced by $\idim(\mu)$ or $\hdim \mu$ in the general class of Lipschitz systems and Borel measures (in the case of natural measures on attractors for smooth diffeomorphisms the question remains open).

The second objective of this paper is to establish lower bounds in the Schroer--Sauer--Ott--Yorke prediction error conjecture, with a focus on assertion~\ref{it: ssoy2 i}, which corresponds to the case of too few measurements. Our main result (which strengthens Theorem~\ref{thm:excluding embed and predict}\ref{it:prob non predict}), shows that the assertion holds true for small enough $\delta > 0$, if the information dimension of the measure is replaced by its Hausdorff dimension, i.e.~after setting $D = \hdim \mu$. 

\begin{thm}\label{thm:ssoy2.i hdim}
Let $X \subset\R^N$, $N \in \N$, be a compact set, let $\mu$ be a Borel probability measure on $X$ and let $T\colon X \to X$ be a Lipschitz map. Fix $k \in \N$ such that $k < \hdim T^k ( \mu |_{ X \setminus \Prep_k (T)})$. Then for a prevalent Lipschitz observable $h \colon X \to \R$ one can find $C, \delta_0 > 0$ such that for every $0 < \delta < \delta_0$ there exists $\eps_0 > 0$ with
	\[ 	\mu(\{x \in X: \sigma_{h,\eps}(\phi_h(x)) > \delta\}) \geq C  \]
for every $0 < \eps < \eps_0$, where $\phi_h$ is the $k$-delay coordinate map corresponding to $h$. If, additionally, $T$ is bi-Lipschitz onto its image or $\mu$ is $T$-invariant, then the assumption $k < \hdim T^k (\mu |_{ X \setminus \Prep_k (T)})$ may be replaced by $k < \hdim \mu |_{ X \setminus \bigcup_{p=1}^k \Per_p (T)}$. Furthermore, if $T$-invariant and ergodic, then it suffices to assume $k < \hdim \mu$.
\end{thm}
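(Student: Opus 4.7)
The proof rests on two main ingredients. The first is the \emph{general theorem on dimensions of conditional measures of $\mu$ with respect to time-delay coordinate maps}, announced in the abstract and established earlier in the paper. Applied to $\mu|_{X \setminus \Prep_k(T)}$ under the assumption $k < \hdim T^k(\mu|_{X \setminus \Prep_k(T)})$, it yields that for a prevalent Lipschitz observable $h$, the disintegration $\mu|_{X \setminus \Prep_k(T)} = \int \mu_y\, d\phi_h\mu(y)$ has conditional measures $\mu_y$ of positive Hausdorff dimension on a set $B \subset \R^k$ of positive $\phi_h\mu$-measure. The second ingredient is a prevalence argument, parallel to the proof of Theorem~\ref{thm:excluding embed and predict}\ref{it:prob non predict}, ensuring that for such an $h$ the observable $h \circ T^k$ is non-constant on $\mu_y$ for $y \in B$: on $X \setminus \Prep_k(T)$ the iterate $T^k$ separates points lying off the orbit relation, and a prevalent Lipschitz $h$ separates their $T^k$-images.

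I would begin by establishing the pointwise identification
\[ \sigma_h(y) := \lim_{\eps \to 0} \sigma_{h,\eps}(y) = \sqrt{\Var_{\mu_y}(\phi_h \circ T)} \qquad \text{for } \phi_h\mu\text{-a.e.\ } y \]
via a Lebesgue differentiation / martingale argument based on the disintegration of $\mu$ with respect to $\phi_h$. Since $\phi_h \circ T$ agrees with $\phi_h$ in the $k-1$ shared coordinates, this variance reduces to $\Var_{\mu_y}(h \circ T^k)$. Combined with the two ingredients above, on the set $A = \phi_h^{-1}(B) \cap (X \setminus \Prep_k(T))$ of positive $\mu$-measure the function $h \circ T^k$ is non-constant on the non-atomic fiber measures, giving $\sigma_h(\phi_h(x)) > 0$ on a subset of $A$ of positive $\mu$-measure.

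To extract the quantitative assertion, write $\{\sigma_h \circ \phi_h > 0\} = \bigcup_n \{\sigma_h \circ \phi_h > 1/n\}$ and pick $n_0$ so that $\mu\{\sigma_h \circ \phi_h > 1/n_0\} \ge 2C$ for some $C > 0$; set $\delta_0 = 1/n_0$. For any $\delta \in (0, \delta_0)$, Egorov's theorem applied to the $\mu$-a.e.\ convergence $\sigma_{h,\eps} \circ \phi_h \to \sigma_h \circ \phi_h$ produces a subset of $\mu$-measure at least $C$ on which the convergence is uniform, and choosing $\eps_0 > 0$ accordingly gives $\sigma_{h,\eps}(\phi_h(x)) > \delta$ on that subset for all $\eps < \eps_0$. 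The main obstacle is the general conditional-dimension theorem itself, which must be proved via a careful prevalence analysis of how Lipschitz perturbations of $h$ affect the fiber structure of $\phi_h$ over sets avoiding periodic behaviour; the strengthened hypotheses (bi-Lipschitz, invariance, ergodicity) allow $\hdim T^k(\mu|_{X \setminus \Prep_k(T)})$ to be replaced first by $\hdim \mu|_{X \setminus \bigcup_{p=1}^k \Per_p(T)}$ and then by $\hdim \mu$, via the same dimension-theoretic reductions already exploited in Theorem~\ref{thm:excluding embed and predict}\ref{it:prob non predict}.
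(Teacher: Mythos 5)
Your outer skeleton matches the paper's proof: identify $\sigma_h(y)$ with the standard deviation of $\phi_h\circ T(\mu_{h,y})$ (this is Lemma~\ref{lem: cond meas iff}(c), and your reduction of that variance to the variance of $h\circ T^k$ on the fiber is correct, since the first $k-1$ coordinates of $\phi_h\circ T$ are constant on $\phi_h^{-1}(y)$), then show the fiber pushforwards are non-Dirac on a set of positive measure, and finally extract $C,\delta_0,\eps_0$ by a soft limiting/Egorov argument, which is equivalent to the paper's direct use of pointwise convergence $\sigma_{h,\eps}\to\sigma_h$ on a positive-measure set where $\sigma_h>\delta_0$. The reductions in the bi-Lipschitz, invariant and ergodic cases are also the same in spirit as the paper's (though note the ergodic case needs the specific observation that if $\mu(\bigcup_{p\le k}\Per_p(T))>0$ then ergodicity forces $\mu$ to sit on a single periodic orbit, so $\hdim\mu=0$ and the hypothesis $k<\hdim\mu$ is vacuous; in the paper these reductions are proved inside this very theorem, not in Theorem~\ref{thm:excluding embed and predict}\ref{it:prob non predict}, which is deduced from it).

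The genuine gap is in how you split the key ingredient. You take from the ``general conditional-dimension theorem'' only that the fiber measures $\mu_y$ themselves have positive Hausdorff dimension (this is the content of Theorem~\ref{thm:injectivity sections}), and you then try to conclude non-constancy of $h\circ T^k$ on the fibers by a \emph{separate} prevalence argument (``$T^k$ separates points off the orbit relation, and a prevalent Lipschitz $h$ separates their $T^k$-images''). That step does not work as a standalone argument: the fibers $\phi_h^{-1}(y)$ and the conditional measures $\mu_{h,y}$ move with $h$, so you cannot first fix the fibers coming from one observable and then perturb $h$ to separate points on them; any perturbation that separates $T^k$-images also changes which pairs of points lie in a common fiber. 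This simultaneity is exactly what the paper's Theorem~\ref{thm:predictability sections} resolves, via the transversality estimate for perturbations restricted to $\Ker D_{x,y}$ (Proposition~\ref{prop:singular value on orbit kernel}) and the energy-integral argument of Theorem~\ref{thm:predictability sections lower full}: one must control $\|\phi_\alpha(Tx)-\phi_\alpha(Ty)\|$ precisely over the set of parameters $\alpha$ for which $\|\phi_\alpha(x)-\phi_\alpha(y)\|$ is small. Your appeal to ``a prevalence argument parallel to the proof of Theorem~\ref{thm:excluding embed and predict}\ref{it:prob non predict}'' is moreover circular relative to the paper's logical order, since that assertion is itself derived from the present theorem. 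If you replace your two-part ingredient by a direct application of Theorem~\ref{thm:predictability sections}\ref{it:predictability sections uhdim} to $\tilde\mu=\mu|_{X\setminus\Prep_k(T)}/\mu(X\setminus\Prep_k(T))$ — which gives $\hdim\,\phi_h\circ T(\mu_{h,\phi_h(x)})>0$ on a positive-measure set, hence non-Dirac pushforwards — the rest of your argument goes through and coincides with the paper's proof.
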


\begin{rem}\label{rem:<>dim} If $h$ is almost surely $k$-predictable, then $\sigma_{h,\eps}(\phi_h(x)) \to 0$ as $\eps \to 0$ for $\mu$-almost every $x \in X$ (see Remark~\ref{rem:two kinds of predict}), so $\mu(\{x \in X: \sigma_{h,\eps}(\phi_h(x)) > \delta\}) \to 0$ as $\eps \to 0$ for every $\delta > 0$. Therefore (as the intersection of two prevalent sets is prevalent, in particular non-empty), Theorem~\ref{thm:predict_prob} shows that the assertion of Theorem~\ref{thm:ssoy2.i hdim} cannot hold for $k > \dim_H \mu$.
\end{rem}

The proof of Theorem~\ref{thm:ssoy2.i hdim} is presented in Section~\ref{sec:cond measures imply}. In Section~\ref{sec:examples} we present examples showing that the assumptions on the dynamics $T$ and the measure $\mu$ are necessary. We also provide an example showing that the result does not hold for information dimension (see Section~\ref{sec:no id}). However, we emphasize that this example lies within the general class of Lipschitz systems, and the considered measure is not a natural measure on an attractor for a smooth diffeomorphism. Hence, it does not provide a counterexample to assertion~\ref{it: ssoy2 i} of the Schroer--Sauer--Ott--Yorke prediction error conjecture.

Combining Theorem~\ref{thm:ssoy2.i hdim} with \cite[Theorem~1.6]{BGSPredict}, we obtain the following version of the Schroer--Sauer--Ott--Yorke prediction error conjecture, which holds true (after necessary modifications) for arbitrary Lipschitz systems on compact sets in Euclidean spaces, equipped with a Borel probability measure. Below we present the result in a slightly simplified version for invariant measures.

\begin{thm}[{\bf Prediction error estimates}{}]\label{thm:SSOY2}
Let $X \subset \R^N$, $N \in \N$, be a compact set, let $\mu$ a $T$-invariant Borel probability measure on $X$ and let $T\colon X \to X$ be a Lipschitz map. Assume $\mu(\bigcup_{p=1}^k\Per_p(T)) = 0$ and set $D_H = \hdim \mu$, $\overline{D}_B = \udim (\supp\mu)$, $\underline{D}_B = \lbdim (\supp\mu)$. Then for a prevalent Lipschitz observable $h \colon X \to \R$ and $k \in \N$, $\theta > 0$, one can find $C, \delta_0 > 0$ such that for every $0 < \delta < \delta_0$ there exists $\eps_0 > 0$ such that for the $k$-delay coordinate map $\phi_h$ and every $0 < \eps < \eps_0$, the following hold.
	\begin{enumerate}[$($i$)$]
		\item\label{it SSOY2 lower error bound} If $k < D_H$, then
		\[ 	\mu(\{x \in X: \sigma_{h,\eps}(\phi_h(x)) > \delta\}) \geq C.  \]
		\item\label{it:SSOY2 error bound} If $k >\overline{D}_B$ and $\phi_h$ is not injective, then
		\[ 
		\mu\left( \{ x \in X : \sigma_{h,\eps}(\phi(x)) > \delta \} \right) \leq C \eps^{k - \overline{D}_B - \theta}.
		\]
		\item\label{it:SSOY2 deterministic} If $k > 2\underline{D}_B$ or $\phi_h$ is injective, then 
		\[ 
		\{ x \in X : \sigma_{h,\eps}(\phi_h(x)) > \delta \} = \emptyset.
		\]
	\end{enumerate}
	If $\mu$ is additionally ergodic, then the assumption $\mu(\bigcup_{p=1}^k\Per_p(T)) = 0$ may be omitted.
\end{thm}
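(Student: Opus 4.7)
The plan is to prove Theorem~\ref{thm:SSOY2} by assembling three ingredients already at hand: Theorem~\ref{thm:ssoy2.i hdim} for the lower-bound assertion~(i), \cite[Theorem~1.6]{BGSPredict} for the upper-bound assertions~(ii) and~(iii), and the standard fact that prevalence is preserved under countable intersections. Each item yields its own prevalent set of Lipschitz observables, and the prevalent set asserted in the theorem is obtained as their intersection over $k \in \N$ and a countable dense family of $\theta$-values.

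For~(i), I would apply Theorem~\ref{thm:ssoy2.i hdim} in its $T$-invariant form, which allows the hypothesis to be weakened to $k < \hdim \mu|_{X \setminus \bigcup_{p=1}^k \Per_p(T)}$. The standing assumption $\mu\bigl(\bigcup_{p=1}^k \Per_p(T)\bigr) = 0$ means the restricted measure coincides with $\mu$, so its Hausdorff dimension equals $D_H$. Consequently, $k < D_H$ activates the conclusion and produces constants $C, \delta_0$ and, for each $\delta \in (0,\delta_0)$, a threshold $\eps_0$, on a prevalent set of observables. In the ergodic case, Theorem~\ref{thm:ssoy2.i hdim} does not require the periodic-point assumption, so the same step applies without it.

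For~(ii) and~(iii), I would directly invoke \cite[Theorem~1.6]{BGSPredict}. That result, valid for arbitrary Lipschitz systems on compact sets in Euclidean spaces with an arbitrary Borel probability measure, supplies prevalent sets of observables on which the upper bound $C\eps^{k - \overline{D}_B - \theta}$ holds whenever $k > \udim(\supp \mu) = \overline{D}_B$ and $\phi_h$ is not injective, and on which $\{x \in X : \sigma_{h,\eps}(\phi_h(x)) > \delta\} = \emptyset$ whenever $\phi_h$ is injective. The remaining subcase of~(iii), namely $k > 2\underline{D}_B$, reduces to $\phi_h$-injectivity via the deterministic Takens-type embedding theorem (Theorem~\ref{thm:predict_determ}, in its box-counting form recalled in \cite{BGSPredict}), which provides a prevalent set of $h$ with $\phi_h$ injective.

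The only real work is bookkeeping: one intersects the prevalent sets obtained above over all $k \in \N$ and over $\theta = 1/n$ for $n \in \N$. The density of $\{1/n\}$ in $(0,\infty)$, together with the monotonicity of $\eps^{k - \overline{D}_B - \theta}$ in $\theta$ for $\eps \in (0,1)$, covers an arbitrary $\theta > 0$, and the resulting set of observables remains prevalent because countable intersections of prevalent sets are prevalent. The $h$-dependent constants $C, \delta_0, \eps_0$ coming from Theorem~\ref{thm:ssoy2.i hdim} can be reconciled with those in the upper-bound statements by further shrinking $\eps_0$ if needed. I do not anticipate any analytic obstacle beyond what is already resolved in Theorem~\ref{thm:ssoy2.i hdim} and \cite[Theorem~1.6]{BGSPredict}; the proof is essentially a synthesis theorem, with the main conceptual point being that the standing assumption $\mu(\bigcup_{p=1}^k\Per_p(T)) = 0$ allows one to pass from $\hdim \mu|_{X \setminus \bigcup_{p=1}^k \Per_p(T)}$ to $\hdim \mu$ without loss.
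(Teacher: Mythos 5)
Your overall strategy is exactly the paper's: Theorem~\ref{thm:SSOY2} is obtained there by simply combining Theorem~\ref{thm:ssoy2.i hdim} (in its $T$-invariant/ergodic form, using $\mu\bigl(\bigcup_{p=1}^k\Per_p(T)\bigr)=0$ to identify $\hdim \mu|_{X\setminus\bigcup_{p=1}^k\Per_p(T)}$ with $D_H$) with \cite[Theorem~1.6]{BGSPredict}, and your treatment of assertion~(i), of assertion~(ii), of the injective subcase of~(iii), and the countable-intersection/prevalence bookkeeping (including the monotonicity in $\theta$ for $\eps<1$) is correct and matches that synthesis.

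There is, however, one genuine misstep: the subcase $k>2\underline{D}_B$ of assertion~(iii) does \emph{not} reduce to prevalent injectivity of $\phi_h$ via the embedding part of Theorem~\ref{thm:predict_determ}. That embedding statement requires $T$ to be injective and the periodic-point conditions $2\udim\Per_p(T)<p$ for $p=1,\dots,k-1$, none of which are assumed in Theorem~\ref{thm:SSOY2} (only $\mu\bigl(\bigcup_{p=1}^k\Per_p(T)\bigr)=0$ is), and its dimensional threshold is $2\udim X$, not $2\lbdim(\supp\mu)$; since the lower box dimension can be strictly smaller than the upper one, $k>2\underline{D}_B$ gives no control of the kind needed, and prevalent injectivity of $\phi_h$ can simply fail under these hypotheses (e.g.\ when $\supp\mu$ is small but $X$ is topologically large). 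The correct route — and the one the paper takes — is that \cite[Theorem~1.6]{BGSPredict} itself already yields the emptiness of $\{x\in X:\sigma_{h,\eps}(\phi_h(x))>\delta\}$ for $k>2\lbdim(\supp\mu)$ directly, with no detour through injectivity; replacing your reduction by that direct citation closes the gap and the rest of your argument goes through unchanged.
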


\begin{rem} Note that Theorem~\ref{thm:SSOY2} does not hold if we replace the dimensions $D_H$, $\overline{D}_B$, $\underline{D}_B$ in assertions (i)--(iii) by one notion of dimension, for any choice among $\idim(\mu)$, $D_H$, $\overline{D}_B$, $\underline{D}_B$. Indeed, \cite[Proposition~8.3]{BGSPredict} shows that Theorem~\ref{thm:SSOY2}.\ref{it:SSOY2 error bound}--\ref{it:SSOY2 deterministic} do not hold either for $\idim(\mu)$ or $D_H$, while Remark~\ref{rem:<>dim} indicates that Theorem~\ref{thm:SSOY2}.\ref{it SSOY2 lower error bound} is not true either for $\overline{D}_B$ or $\underline{D}_B$ (to see this, it is sufficient to consider any example of the measure $\mu$ with $D_H < k < \underline{D}_B$ for some $k \in \N$). On the other hand, if $\mu$ is exactly dimensional (see Subsection~\ref{subsec:dim}), then $\idim(\mu) = D_H$ (see \cite{Y82,FanLauRao02}).  
\end{rem}

We complete our results on the Schroer--Sauer--Ott--Yorke prediction error conjecture with a discussion on the relation between $h$ and $\delta$, presenting a non-trivial example for which the lower bounds in the conjecture do not hold in its alternative interpretation, even in the class of natural measures for smooth axiom~A diffeomorphisms of compact Riemannian manifolds. This issue is clarified in the following remark and the discussion afterwards. 

\begin{rem}\label{rem:order of quant}
The assertions of Theorems~\ref{thm:ssoy2.i hdim} and~\ref{thm:SSOY2} are valid for every $\delta > 0$ small enough depending on $h$, i.e.~for $\delta < \delta_0$ with $\delta_0 = \delta_0(h)$. An alternative interpretation of the Schroer--Sauer--Ott--Yorke prediction error conjecture leads to a question, whether the suitable estimates hold for typical $h$ with a fixed $\delta$. This is certainly true in the case of the upper bounds in assertions \ref{it: ssoy2 ii}--\ref{it: ssoy2 iii} of the conjecture, as for a given observable $h,$ if an upper bound holds for $\delta_0$, then it also holds for all $\delta > \delta_0$. The situation is different for the lower bound, for a trivial reason: if for a given observable $h$ we take $\delta$ much larger than $\diam (h(X))$, then $\sigma_{h,\eps}(y) \leq \delta$ for all $\eps$, and hence the set $\{ x \in X : \sigma_{h,\eps}(\phi_{h}(x)) > \delta \}$ is empty. This shows that the range $(0,\delta_0)$ for which a lower bound holds must depend on $h$.
\end{rem}

In view of the above remark, it is natural to consider non-constant observables and ask whether the lower bounds in assertions~\ref{it: ssoy2 i}--\ref{it: ssoy2 ii} of the prediction error conjecture hold for all $\delta < \delta_0$ with $\delta_0$ locally almost independent of $h$, i.e.~whether for a given non-constant observable $h_0$ there exists $\delta_0 > 0$ such that the lower bounds are valid for every $\delta < \delta_0$ and typical $h$ in some neighbourhood of $h_0$ (so that we can in particular consider $h$ with $\diam(h(X))$ uniformly bounded from below). The following theorem shows that this is not the case. In fact, it remains untrue even after setting $D=\hdim \mu$ or $D=\udim X$, so one cannot obtain the corresponding lower bounds in Theorem~\ref{thm:SSOY2} with $\delta_0$ locally almost independent of non-constant $h$.

\begin{thm}[{\bf Counterexample}{}]\label{thm:counterexample}
There exists a compact Riemannian manifold $M$, a $C^\infty$-axiom~A diffeomorphism $T\colon M \to M$ with a compact $T$-invariant attractor $X \subset M$ and a natural measure $\mu$ on $X$, such that $\idim(\mu) = \hdim \mu = \hdim X = \bdim X = D = 3/2$ and a non-constant Lipschitz observable $h_0 \colon X \to \R$, such that the following holds. For $k = 1,2$ and every $\delta > 0$, there exists $\eps_0> 0$ and an open set $\mathcal{U} \subset \Lip(X, \R)$ containing $h_0$ such that for every $h \in \mathcal{U}$, the corresponding $k$-delay coordinate map $\phi_{h}$ is not injective and for every $0 <\eps < \eps_0$ there holds
\[
\{ x \in X : \sigma_{h,\eps}(\phi_{h}(x)) > \delta \} = \emptyset.
\]
Consequently, for $k = 1$ we have $k < D$ and there is no $C = C(\delta, h) > 0$, such that
\[
\mu( \{ x \in X : \sigma_{h, \eps}(\phi_{h}(x)) > \delta  \}) \geq  C,
\]
so assertion~\ref{it: ssoy2 i} of the conjecture does not hold when the range of allowable $\delta$ is locally almost independent of $h$. Similarly, specifying to $k = 2$ we have $D < k < 2D$ and so the lower bound in assertion~\ref{it: ssoy2 ii} of the conjecture fails when the range of allowable $\delta$ is locally almost independent of $h$.
\end{thm}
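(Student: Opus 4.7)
The plan is to realize the example as the Smale--Williams dyadic solenoid. Take a closed Riemannian manifold $M$ (e.g.\ $S^3$) containing a solid torus $S^1\times D^2$, and a $C^\infty$ axiom~A diffeomorphism $T$ of $M$ extending the solenoid map $T(\theta, z) = (2\theta \bmod 1,\, \lambda z + \tfrac12 e^{2\pi i\theta})$ on $S^1 \times D^2$ with contraction $\lambda = 1/4$. The attractor $X = \bigcap_{n\ge 0} T^n(S^1\times D^2)$ is the dyadic solenoid, and standard self-similarity arguments give $\hdim X = \bdim X = 1 + \log 2 / |\log \lambda| = 3/2$, while the SRB measure $\mu$ is exact dimensional (Ledrappier--Young) so $\idim(\mu) = \hdim \mu = 3/2$. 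Define the base observable $h_0(\xi) = \cos(2\pi\theta(\xi))$, a smooth non-constant function of the angular coordinate. The non-injectivity of $\phi_h$ for $k = 1, 2$ will be automatic from topological obstructions: as the inverse limit of circles, $X$ is connected but not path-connected, so it does not embed in $\R$; and $\check H^1(X) \cong \Z[1/2]$ is not free abelian, so Alexander duality rules out any embedding of $X$ into $\R^2$. Consequently $\phi_h$ fails to be injective for every continuous $h\colon X \to \R$, hence for every $h$ in any neighbourhood of $h_0$.

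The heart of the proof is a uniform prediction-error bound. Since $\theta \circ T = 2\theta \bmod 1$, both $\phi_{h_0}$ and $\phi_{h_0}\circ T$ factor through the angular coordinate: $\phi_{h_0} = F \circ \theta$ with $F\colon S^1\to\R^k$ defined by the first $k$ coordinates of $(\cos 2\pi\theta, \cos 4\pi\theta, \cos 8\pi\theta, \ldots)$, and $\phi_{h_0}\circ T = G \circ \theta$ with $G(\theta) = F(2\theta \bmod 1)$. The double-angle identity $\cos 2\alpha = 2\cos^2\alpha - 1$ exhibits an explicit polynomial factorisation $G = \tilde G \circ F$; e.g., for $k = 2$, $\tilde G(u, v) = (v,\, 2v^2 - 1)$. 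Being a polynomial on the compact set $F(S^1) \subseteq [-1, 1]^k$, the factor map $\tilde G\colon F(S^1)\to\R^k$ is Lipschitz with an explicit constant $L$. Combined with the trivial bound $\|\phi_h - \phi_{h_0}\|_\infty \le \sqrt{k}\,\|h - h_0\|_\infty$ and the inclusion $\phi_h^{-1}(B(y, \eps)) \subseteq \phi_{h_0}^{-1}(B(y, \rho))$ for $\rho = \eps + \sqrt{k}\,\|h - h_0\|_\infty$, this gives
\[
\sigma_{h,\eps}(y) \;\le\; \diam\bigl(\phi_h\circ T(\phi_h^{-1}(B(y, \eps)))\bigr) \;\le\; C_0\bigl(\eps + \|h - h_0\|_\infty\bigr)
\]
uniformly in $y$, for some $C_0$ depending only on $L$ and $k$. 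Given $\delta > 0$, taking $\mathcal{U} = \{h \in \Lip(X, \R) : \|h - h_0\|_{\Lip} < \delta/(2C_0)\}$ and $\eps_0 = \delta/(2C_0)$ then makes $\sigma_{h,\eps}(y) < \delta$ uniformly for $h \in \mathcal{U}$ and $\eps < \eps_0$, so $\{x : \sigma_{h, \eps}(\phi_h(x)) > \delta\} = \emptyset$, as desired.

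The main technical point I expect to require extra care is establishing the Lipschitz property of the factor map $\tilde G$ on $F(S^1)$, since $F$ has critical points at $\theta = 0, 1/2$ where $F' = 0$ and thus no Lipschitz global inverse. The Chebyshev identity sidesteps this completely: it provides an explicit polynomial formula for $\tilde G$ in the coordinates of $F(S^1)$, reducing Lipschitz continuity to a one-line computation on a bounded set. Everything else --- the dimensions of the solenoid and its SRB measure, the topological non-embeddability via Alexander duality, the sup-norm perturbation bookkeeping, and the final parameter choice --- is either classical in hyperbolic dynamics or an elementary calculation.
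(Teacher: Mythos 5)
Your overall strategy is the same as the paper's: a solenoid attractor of dimension $3/2$, the observable $h_0=\cos$ of the angular coordinate (predictable via the double-angle/Chebyshev identity), non-embeddability of the solenoid in $\R^2$ to kill injectivity for all nearby $h$, and a perturbation argument giving a uniform bound on $\sigma_{h,\eps}$ for $h$ close to $h_0$ and small $\eps$. Your quantitative version of the last step (an explicit Lipschitz prediction map $\tilde G$ via Chebyshev polynomials, instead of the paper's merely continuous prediction map plus uniform continuity) is fine, as is the Alexander-duality argument for non-planarity in place of the paper's citation to Bing and Hoehn--Oversteegen.

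The genuine gap is at the very first step: you assert, without proof or reference, that the $2$-adic solenoid map on a solid torus in $S^3$ ``extends to a $C^\infty$ axiom~A diffeomorphism $T$ of $M$'' and call this classical. It is not routine, and the theorem's statement requires a globally axiom~A diffeomorphism of a closed manifold, not just a hyperbolic attractor of an embedding of the solid torus. The difficulty is what happens in the complementary solid torus $W$: the basin of the attractor is the increasing union $\bigcup_n T^{-n}(\operatorname{int}V)$, whose first homology is the direct limit $\Z\xrightarrow{\,2\,}\Z\xrightarrow{\,2\,}\cdots\cong\Z[1/2]$, so the basin cannot be $S^3$ minus a point or minus an invariant circle; equivalently, the complementary dynamics cannot consist of a repelling fixed point or a normally repelling invariant circle, and whatever hyperbolic basic sets lie in $W$ must themselves be solenoidal. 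Thus one has to actually construct (or cite) a global diffeomorphism whose nonwandering set is a hyperbolic solenoid attractor--repeller pair, and which closed $3$-manifolds and winding numbers admit such a diffeomorphism is a nontrivial realization problem. This is precisely why the paper's proof contains a separate statement (its Theorem on the axiom~A extension) and passes to the \emph{square} of the solenoid map (a $4$-adic solenoid) on the lens space $L(3,7)$, so as to invoke the explicit claim of Jiang--Ni--Wang; your proposal needs either such a citation covering your $(S^3,\,2\text{-adic})$ choice or a genuine construction, and as written this step is missing. The rest of your argument (dimension formulas for the attractor and its SRB/natural measure, predictability of $h_0$, non-injectivity, and the $\sigma_{h,\eps}\le C_0(\eps+\|h-h_0\|_\infty)$ estimate with the final choice of $\mathcal U$ and $\eps_0$) is correct and matches the paper's proof in substance.
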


The proof of Theorem~\ref{thm:counterexample} is presented in Section~\ref{sec:counterexample}. It remains an open problem whether assertion~\ref{it: ssoy2 ii} of the prediction error conjecture is true if the range of $\delta$ is allowed to depend on $h$.

\subsection{Dimension of conditional measures}\label{sec:cond meas dim}

To show Theorems~\ref{thm:excluding embed and predict}, \ref{thm:excluding embed and predict determ} and~\ref{thm:ssoy2.i hdim}, we prove more general results on the dimension of \emph{conditional measures} for the measure $\mu$ with respect to the delay coordinate map, which may be interesting on their own. Following \cite{SimmonsRohlin} (see also \cite[Subsection~2.4]{BGS22}), for a Borel map $\phi \colon X \to \R^k$ on a compact set $X \subset \R^N$ and a (complete) Borel probability measure $\mu$ on $X$, we define a system of measures $\mu_{\phi, y}$, $y \in \R^k$, where $\mu_{\phi, y}$ is a (possibly zero) Borel measure on $\phi^{-1}(y)$ defined as the weak-$^*$ limit 
\begin{equation}\label{eq:cond measure limit}
\mu_{\phi, y} = \lim_{\delta \to 0} \frac{1}{\mu(\phi^{-1}(B(y,\delta)))} \; \mu|_{\phi^{-1}(B(y,\delta))},
\end{equation}
whenever the limit exists, and zero otherwise. By the topological Rohlin disintegration theorem \cite{SimmonsRohlin}, the limit in \eqref{eq:cond measure limit} exists for $\phi \mu$-almost every $y \in \R^k$ and satisfies
\begin{equation}\label{eq:cond meas decomp}
\mu(E) = \int_{\R^k} \mu_{\phi, y}(E) \; d(\phi \mu)(y) \qquad\text{for  every }  \mu\text{-measurable } E \subset X
\end{equation}
(in particular, the function $\R^k \ni y \mapsto \mu_{\phi, y}(E)$ in \eqref{eq:cond meas decomp} is $\phi \mu$-measurable) and
\begin{equation}\label{eq:cond meas inverse}
\mu_{\phi, y}(\phi^{-1}(y)) = 1 \qquad\text{for $\phi \mu$-almost every } y \in \R^k.
\end{equation}
The system $\{\mu_{\phi, y}\}_{y \in \R^k}$ is called the \emph{system of conditional measures for $\mu$ with respect to $\phi$}. Moreover, the conditions \eqref{eq:cond meas decomp} and \eqref{eq:cond meas inverse} characterize the system  $\{\mu_{\phi, y}\}_{y \in \R^k}$ uniquely ($\phi \mu$-almost surely). See \cite{SimmonsRohlin} for details.

For a Borel transformation $T\colon X \to X$ on a Borel set $X \subset \R^N$ with a Borel probability measure $\mu$ on $X$ and a $k$-delay coordinate map $\phi_h\colon X \to \R^k$ corresponding to a Borel observable $h\colon X \to 
\R$, we consider a system of conditional measures $\{\mu_{h, y}\}_{y \in \R^k}$ for the (completion of) $\mu$ with respect to $\phi_h$, where for simplicity, we write $\mu_{h, y}$ instead of $\mu_{\phi_h, y}$. A direct relation between the almost sure injectivity of the delay coordinate maps, predictability of observables and the properties of the system of conditional measures is described in the following lemma.

\begin{lem}\label{lem: cond meas iff}Let $X \subset\R^N$, $N \in \N$, be a compact set, let $\mu$ be a Borel probability measure on $X$ and let $T\colon X \to X$ be a continuous map. Consider a continuous observable $h \colon X \to \R$ and fix $k \in \N$. Then the following hold.
\begin{enumerate}[$($a$)$]
	\item The $k$-delay coordinate map $\phi_h$ is almost surely injective if and only if $\mu_{h,y}$ is a Dirac's measure for $\phi_h \mu$-almost every $y \in \R^k$.
	\item The observable $h$ is almost surely $k$-predictable if and only if $\phi_h \circ T (\mu_{h,y})$ is a Dirac's measure for $\phi_h \mu$-almost every $y \in \R^k$.
	\item 
For $\phi_{h} \mu$-almost every $y \in \R^k$, the prediction error $\sigma_h(y) = \lim_{\eps\to 0}\sigma_{h,\eps}(y)$ exists and is equal to the standard deviation of a random variable with probability distribution $\phi_h \circ T (\mu_{h,y})$.
\end{enumerate}
\end{lem}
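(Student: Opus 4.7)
The plan is to derive all three parts from Rohlin's disintegration properties \eqref{eq:cond meas decomp}--\eqref{eq:cond meas inverse} of the conditional system $\{\mu_{h,y}\}$, together with the defining weak-$^*$ convergence in \eqref{eq:cond measure limit}. The common principle is that a pointwise property of $\phi_h$, respectively $\phi_h\circ T$, on the fibres $\phi_h^{-1}(y)$ is captured exactly by a measure-theoretic property of $\mu_{h,y}$, up to $\phi_h\mu$-null sets.

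For part (a), the easy direction is to assume that $\phi_h$ is injective on a full-measure Borel set $X_h$; applying \eqref{eq:cond meas decomp} to $X\setminus X_h$ gives $\mu_{h,y}(X_h)=1$ for $\phi_h\mu$-a.e.~$y$, and since $\phi_h^{-1}(y)\cap X_h$ is a single point, $\mu_{h,y}$ must be a Dirac measure. For the converse, assuming $\mu_{h,y}$ is Dirac for $\phi_h\mu$-a.e.~$y$, I would extract a Borel measurable selector $f\colon\phi_h(X)\to X$ with $\mu_{h,y}=\delta_{f(y)}$, using the Kuratowski--Ryll-Nardzewski theorem (applicable since $\phi_h$ is continuous on the compact set $X$ and fibres are closed). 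Then $X_h=\{x\in X:x=f(\phi_h(x))\}$ is Borel, $\phi_h|_{X_h}$ is injective, and the disintegration formula combined with $\mu_{h,y}(X_h)=\delta_{f(y)}(\{f(y)\})=1$ gives $\mu(X_h)=1$.

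Part (b) follows the same template. Predictability with a prediction map $S_h$ on $\phi_h(X_h)$ is equivalent to $\phi_h\circ T$ being constant on each non-empty fibre $\phi_h^{-1}(y)\cap X_h$, which by the same disintegration argument is equivalent to $\phi_h\circ T$ being $\mu_{h,y}$-almost surely constant for $\phi_h\mu$-a.e.~$y$, i.e.~to $\phi_h\circ T(\mu_{h,y})$ being a Dirac measure. The reverse direction again uses a Borel selector of the a.e.~constant value of $\phi_h\circ T$ on the fibre, which serves to define $S_h$.

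For part (c), the heart of the argument is the weak-$^*$ convergence in \eqref{eq:cond measure limit}: the normalized restrictions $\mu_\eps=\mu(\phi_h^{-1}(B(y,\eps)))^{-1}\mu|_{\phi_h^{-1}(B(y,\eps))}$ converge to $\mu_{h,y}$ for $\phi_h\mu$-a.e.~$y$. Because $T$ and $h$ are continuous and $X$ is compact, $\phi_h\circ T$ is bounded and continuous, so $\chi_{h,\eps}(y)=\int\phi_h\circ T\,d\mu_\eps$ converges to the mean $\bar\chi(y):=\int\phi_h\circ T\,d\mu_{h,y}$ of $\phi_h\circ T(\mu_{h,y})$. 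Then $\|\phi_h\circ T-\chi_{h,\eps}(y)\|^2$ converges uniformly on $X$ to $\|\phi_h\circ T-\bar\chi(y)\|^2$, hence $\sigma_{h,\eps}(y)^2\to\int\|\phi_h\circ T-\bar\chi(y)\|^2\,d\mu_{h,y}$, which is the variance of $\phi_h\circ T(\mu_{h,y})$. The main technical obstacle across (a) and (b) is the measurable selection step (and checking Borelness of $X_h$); in (c) it is pairing a weak-$^*$ limit with an integrand that itself varies with $\eps$, which is handled by the uniform convergence of $\|\phi_h\circ T-\chi_{h,\eps}(y)\|^2$ together with the boundedness of $\phi_h\circ T$ on the compact set $X$.
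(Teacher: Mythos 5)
Your proposal is correct, but it is noticeably more self-contained than the paper's proof and differs in two places. For the converse in (a) (injective $\Rightarrow$ Dirac) you argue directly on fibres: applying \eqref{eq:cond meas decomp} to $X\setminus X_h$ and combining with \eqref{eq:cond meas inverse} shows $\mu_{h,y}$ is a probability measure concentrated on the singleton $\phi_h^{-1}(y)\cap X_h$; the paper instead exhibits the explicit system $\tilde\mu_{h,y}=\delta_{(\phi_h|_{X_h})^{-1}(y)}$ and invokes the almost-sure uniqueness of the Rohlin disintegration (deferring measurability details to an earlier paper). For (b) the paper does not argue directly at all: it deduces (b) from (c) together with the equivalence of almost sure predictability and $\phi_h\mu$-a.e.\ vanishing of the prediction error quoted in Remark~\ref{rem:two kinds of predict} (i.e.\ \cite[Theorem~1.14]{BGSPredict}), whereas you rerun the fibre-constancy/disintegration template of (a), which is a genuinely different and more elementary route; likewise your (c) is a direct weak-$^*$ argument where the paper simply cites \cite[Lemma~3.2]{BGS22}, and your handling of the $\eps$-dependent integrand via the convergence $\chi_{h,\eps}(y)\to\bar\chi(y)$ and boundedness of $\phi_h\circ T$ is sound. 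One caveat: in (a) and (b) your appeal to the Kuratowski--Ryll-Nardzewski theorem is not the right tool, since a selector of the closed fibres $\phi_h^{-1}(y)$ need not be the atom of $\mu_{h,y}$ (or of $\phi_h\circ T(\mu_{h,y})$); no selection theorem is needed, because the point $f(y)$ is already uniquely determined a.e., and its measurability follows from the $\phi_h\mu$-measurability of $y\mapsto\mu_{h,y}(E)$ guaranteed by the disintegration theorem (this is exactly how the paper obtains it, via $\{y: f(y)\in E\}=\{y:\mu_{h,y}(E)=1\}$ up to a null set). With that detail repaired, your argument goes through; what it buys is independence from the external references, at the cost of redoing work the paper outsources.
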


The proof of Lemma~\ref{lem: cond meas iff} is presented in Section~\ref{sec:cond measures imply}.

Our first result on the system of conditional measures is related to almost sure injectivity of delay coordinate maps and is a generalization of assertion~\ref{it:prob non injective} of Theorem~\ref{thm:excluding embed and predict}.

\begin{thm}\label{thm:injectivity sections}
	Let $X \subset\R^N$, $N \in \N$, be a compact set, let $T\colon X \to X$ be a Lipschitz map and let $\mu$ be a Borel probability measure on $X$. Fix $k \in \N$. Then, for a prevalent Lipschitz observable $h \colon X \to \R$ and the $k$-delay coordinate map $\phi_h$, the following hold.
	\begin{enumerate}[$($a$)$]
		\item\label{it:injectivity sections lhdim} $\lhdim T^{k-1}(\mu_{h, \phi_h(x)}) \geq \lhdim T^{k-1} \mu - k$ for $\mu$-almost every $x \in X$.
		\item\label{it:injectivity sections uhdim} For every $\eps > 0$, there holds $\hdim T^{k-1}(\mu_{h, \phi_h(x)}) \geq \hdim T^{k-1} \mu - k - \eps$ for $x$ from a set of positive $\mu$-measure.
	\end{enumerate}
\end{thm}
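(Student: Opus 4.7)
The plan is to reformulate the assertion as a potential-theoretic slicing estimate for an auxiliary measure on the product space $\R^k\times\R^N$, and to verify the slicing hypothesis for prevalent $h$ using a Hunt--Sauer--Yorke polynomial probe argument.

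First I would consider the lifted coordinate map $\Psi_h\colon X\to\R^k\times\R^N$, $\Psi_h(x)=(\phi_h(x),T^{k-1}x)$, and the pushforward $\eta_h=\Psi_h\mu$. The canonical projection $\R^k\times\R^N\to\R^k$ disintegrates $\eta_h$ into fibre measures of the form $\delta_y\otimes T^{k-1}\mu_{h,y}$, and by the energy version of Marstrand's slicing theorem applied to this fixed projection, finiteness of the Riesz $t$-energy
\[ I_t(\eta_h)=\iint_{X\times X}\frac{d\mu(x)\,d\mu(x')}{\bigl(|\phi_h(x)-\phi_h(x')|^2+|T^{k-1}x-T^{k-1}x'|^2\bigr)^{t/2}} \]
for some $t>k$ implies $I_{t-k}(T^{k-1}\mu_{h,y})<\infty$ for $\phi_h\mu$-a.e.\ $y$, whence $\lhdim T^{k-1}\mu_{h,y}\ge t-k$. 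Choosing $t$ just below $\lhdim T^{k-1}\mu$ in (a), resp.\ $\hdim T^{k-1}\mu-\eps/2$ in (b), reduces the problem to the ambient energy bound $I_t(\eta_h)<\infty$ for prevalent $h$, after an Egorov-type (resp.\ Frostman) truncation to an auxiliary sub-measure $\tilde\mu\le\mu$ of mass $>1-\delta$ (resp.\ positive mass) supporting uniform Frostman bounds $T^{k-1}\tilde\mu(B(z,r))\le Cr^t$ and, after a further truncation if necessary, $\tilde\mu(B(x,r))\le Cr^t$ (using $\lhdim\tilde\mu\ge\lhdim T^{k-1}\tilde\mu$, since Lipschitz maps do not increase lower Hausdorff dimension).

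For prevalence I employ the polynomial probe of \cite{SYC91, BGS20}: writing $h=h_0+p_w$ with $w\in[0,1]^J$ parameterizing polynomials of a large fixed degree on $\R^N$, Fubini yields
\[ \int I_t(\Psi_h\tilde\mu)\,dw=\iint\left(\int\frac{dw}{\bigl(|\phi_{h_0+p_w}(x)-\phi_{h_0+p_w}(x')|^2+|T^{k-1}x-T^{k-1}x'|^2\bigr)^{t/2}}\right)d\tilde\mu(x)\,d\tilde\mu(x'). \]
The core transversality input is that for $\tilde\mu\otimes\tilde\mu$-a.e.\ pair $(x,x')$ the linear map $w\mapsto\phi_{p_w}(x)-\phi_{p_w}(x')\in\R^k$ has rank $k$, by a Vandermonde-type argument for polynomials of sufficient degree. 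Hence the push-forward of the Lebesgue probe to $\R^k$ has density controlled by the reciprocal of the appropriate Gram determinant of evaluation vectors, and partitioning pairs $(x,x')$ according to whether the orbit segments $(T^jx)_{j=0}^{k-1}$, $(T^jx')_{j=0}^{k-1}$ are simultaneously within $O(|T^{k-1}x-T^{k-1}x'|)$ and estimating the inner integral in each regime produces
\[ \int I_t(\Psi_h\tilde\mu)\,dw\le C\bigl[I_{t-k+1}(T^{k-1}\tilde\mu)+I_t(\tilde\mu)\bigr]<\infty. \]
Hence $I_t(\Psi_h\tilde\mu)<\infty$ for Lebesgue-a.e.\ $w$, which is precisely the definition of prevalence.

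Finally, the bound on $T^{k-1}\tilde\mu_{h,y}$ transfers to $T^{k-1}\mu_{h,y}$ directly by monotonicity of $\hdim$ under measure domination in assertion (b), and by a density-point argument combined with a countable exhaustion $\tilde\mu_n(X)\uparrow 1$ in assertion (a), since $\lhdim$ is not monotone in the favourable direction. The principal obstacles are the orbit close/far partition in the transversality step --- required to extract the loss of exactly $k$ dimensions in the slicing bound, since a naive universal estimate $|\det G(x,x')|^{1/2}\gtrsim |T^{k-1}x-T^{k-1}x'|$ on the Gram determinant yields only a loss of $1$ --- and the density-point transfer in (a), where one must equate local dimensions of $T^{k-1}\mu_{h,y}$ and $T^{k-1}\tilde\mu_{n,h,y}$ at density points of $\tilde\mu_n$.
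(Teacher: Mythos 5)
The decisive gap is the claimed ``energy version of Marstrand's slicing theorem applied to this fixed projection'': no such statement is true. For a measure $\eta$ on $\R^k\times\R^N$ and the coordinate projection $\pi$ onto $\R^k$, finiteness of the Riesz energy $I_t(\eta)$ for some $t>k$ does \emph{not} imply that the fibre (conditional or geometric-slice) measures have finite $(t-k)$-energy, nor even positive dimension, for $\pi\eta$-almost every $y$. A counterexample with $k=N=1$: let $\eta$ be the graph measure of a H\"older-$\tfrac12$ Brownian/Weierstrass-type function $f$ over Lebesgue on $[0,1]$; then $I_t(\eta)<\infty$ for some $t>1$ (the graph has dimension $3/2$), $\pi\eta$ is even Lebesgue measure, yet every fibre measure is a Dirac mass. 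The Grassmannian average in the classical slicing theorem is exactly what excludes such exceptional directions; a single fixed projection does not suffice, and neither the disintegration formula $\delta_y\otimes T^{k-1}\mu_{h,y}$ nor absolute continuity of the projection rescues the implication. This is not peripheral: the graph-like scenario (conditional measures collapsing to atoms while the lifted measure keeps all its energy) is precisely what must be ruled out for prevalent $h$, and it genuinely occurs for special observables (e.g.\ $k=1$, $X$ the closure of such a graph, $T=\mathrm{id}$, $h$ the first coordinate), so the reduction cannot be valid as stated.

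Relatedly, the quantity you control with the probe cannot carry the required information: since $\|\Psi_h(x)-\Psi_h(x')\|\ge\|T^{k-1}x-T^{k-1}x'\|$, one has $I_t(\Psi_h\tilde\mu)\le\mE_t(T^{k-1}\tilde\mu)$ for \emph{every} $h$, so $\int I_t(\Psi_{h_0+p_w}\tilde\mu)\,dw<\infty$ holds trivially (for $t$ below $\cdim T^{k-1}\tilde\mu$) and independently of $h$; it cannot distinguish observables whose fibres are atoms from those with large fibres. The transversality average over the probe has to be applied to the slice energies themselves, which is what the paper does: it bounds $\int_{\alpha}\int_{\R^k}\mE_{s-k}\bigl(T^{k-1}(\nu^G_{\alpha,z})\bigr)\,d\mH^k(z)\,d\alpha$ via $\Leb\{\alpha:\|\phi_\alpha(x)-\phi_\alpha(x')\|\le\delta\}\le C(\delta/\sigma_k(D_{x,x'}))^k$ together with the Sauer--Yorke lower bound $\sigma_k(D_{x,x'})\ge C\eps^{2k-2}\|T^{k-1}x-T^{k-1}x'\|$, valid only when the $2k$ orbit points are $\eps$-separated. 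This forces the phase-space decomposition with orbit separation, a separate treatment of pre-periodic points (missing from your proposal: your rank-$k$/Gram-determinant claim fails on and near $\Prep_{k-1}(T)$, and removing the hypothesis $\mu(\Prep_{k-1}(T))=0$ requires reducing the $k$-delay map to an $\ell$-delay map on each pre-periodic stratum), and the absolute continuity $\phi_\alpha\mu\ll\mH^k$ (itself proved by the same transversality) in order to pass from geometric slices to conditional measures. Your closing remarks on the $\tilde\mu\to\mu$ transfer and the unfavourable monotonicity of $\lhdim$ are in the right spirit (the paper handles this with a countable exhaustion by the sets $F\in\mF$), but they do not repair the central, invalid reduction.
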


The second result is related to almost sure predictability and generalizes assertion~\ref{it:prob non predict} of Theorem~\ref{thm:excluding embed and predict}. 

\begin{thm}\label{thm:predictability sections}
	Let $X \subset\R^N$, $N \in \N$, be a compact set, let $T\colon X \to X$ be a Lipschitz map and let $\mu$ be a Borel probability measure on $X$. Fix $k \in \N$ and assume $\mu (  \Prep_k (T)) = 0$. Then, for a prevalent Lipschitz observable $h \colon X \to \R$ and the $k$-delay coordinate map $\phi_h$, the following hold.
	\begin{enumerate}[$($a$)$]
		\item\label{it:predictability sections lhdim} $\lhdim \, \phi_h \circ T(\mu_{h, \phi_h(x)}) \geq \min\{ 1, \lhdim T^{k} \mu - k\}$ for $\mu$-almost every $x \in X$.
		\item\label{it:predictability sections uhdim} For every $\eps > 0$, there holds $\hdim \phi_h \circ T(\mu_{h, \phi_h(x)}) \geq \min\{ 1, \hdim T^{k} \mu - k - \eps\}$ for $x$ from a set of positive $\mu$-measure.
	\end{enumerate}
\end{thm}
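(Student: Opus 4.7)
The first step is a reduction: on the fiber $\phi_h^{-1}(\phi_h(x))$, the first $k-1$ coordinates of $\phi_h\circ T$ coincide with the last $k-1$ coordinates of $\phi_h$ and are therefore constant, so only the new coordinate $h\circ T^k$ varies. Writing $y = \phi_h(x) = (y_1, \ldots, y_k)$, this gives
\[
\phi_h \circ T(x') = (y_2, \ldots, y_k, h(T^k x')) \qquad \text{for every } x' \in \phi_h^{-1}(y),
\]
so $\phi_h \circ T(\mu_{h, y})$ is supported on the affine line $\{y_2\}\times\cdots\times\{y_k\}\times \R$ and has both $\lhdim$ and $\hdim$ equal to those of the pushforward $(h\circ T^k)_*\mu_{h, y}$. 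The cap $\min\{1, \cdot\}$ in the conclusion is then automatic from the latter being a measure on $\R$.

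The second step is to apply Theorem~\ref{thm:injectivity sections} one delay higher. Consider the $(k+1)$-delay coordinate map $\Phi = \phi_h^{(k+1)} = (\phi_h, h\circ T^k) \colon X \to \R^{k+1}$; the hypothesis $\mu(\Prep_k(T)) = 0$ is exactly what prevents $\Phi$ from collapsing onto a lower-order delay map on a full-measure set, so the prevalent set of Lipschitz observables produced by Theorem~\ref{thm:injectivity sections} applied with $k+1$ in place of $k$ is still available. This yields, for prevalent Lipschitz $h$,
\[
\lhdim T^k \mu_{\Phi, \Phi(x)} \geq \lhdim T^k \mu - (k+1)
\]
for $\mu$-a.e.\ $x$, together with the analogous $\hdim$ lower bound on a positive $\mu$-measure set. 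By the tower property of Rohlin disintegration applied to the factorization $\Phi = (\phi_h, h\circ T^k)$, the conditional measures factor as $\mu_{\Phi, (y, z)} = (\mu_{h, y})_{h\circ T^k, z}$ for $\Phi\mu$-a.e.\ $(y, z)$, so the above provides dimensional information about the level sets of $h\circ T^k$ inside the fibers of $\phi_h$.

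The third step is to combine this with the disintegration
\[
T^k \mu_{h, y} \;=\; \int T^k(\mu_{h, y})_{h\circ T^k, z}\, d\nu_y(z), \qquad \nu_y := (h\circ T^k)_*\mu_{h, y},
\]
in which each slice $T^k(\mu_{h, y})_{h\circ T^k, z}$ is supported on $h^{-1}(z) \cap T^k(X)$, since $h\circ T^k$ is constantly $z$ on the fiber. This identifies $T^k\mu_{h, y}$ with the ``layer-cake'' decomposition of itself along level sets of $h$ in $T^k(X)$, whose marginal is exactly $\nu_y$ --- the measure whose dimension we aim to bound from below. A Marstrand-type slicing identity, combined with the slice lower bound from Step~2 and a companion lower bound for $\lhdim T^k\mu_{h, y}$ (obtained by an adaptation of Theorem~\ref{thm:injectivity sections} to the $T^k$-pushforward, available directly in the bi-Lipschitz/invariant/ergodic cases listed in the statement), would then force $\lhdim \nu_y \geq \min\{1, \lhdim T^k\mu - k\}$, and analogously for $\hdim$.

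The main obstacle is the slicing step itself. The classical Marstrand-type inequality, applied to a decomposition $\tau = \int \sigma_z \, d\nu(z)$, produces a lower bound on $\hdim \tau$ from lower bounds on the fibers $\hdim \sigma_z$, which is the wrong direction for isolating $\hdim \nu$. To circumvent this I would argue by the energy method: show that for a prevalent Lipschitz $h$ and any $t < \min\{1, \hdim T^k \mu - k\}$, the integral $\int I_t(\nu_y)\, d\phi_h\mu(y)$ is finite, where $I_t$ denotes the $t$-energy. Via Fubini this reduces to estimating
\[
\iint \frac{dM(x, x')}{|h(T^k x) - h(T^k x')|^{t}}, \qquad M := \int \mu_{h, y}\otimes\mu_{h, y}\, d\phi_h\mu(y),
\]
for the fiber-product measure $M$. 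The assumption $\mu(\Prep_k(T)) = 0$ is crucial here: since for $\mu$-a.e.\ $x$ the orbit $x, Tx, \ldots, T^k x$ consists of $k+1$ distinct points, the polynomial probe set witnessing prevalence contains perturbations that move $h(T^k\cdot)$ transversely to $\phi_h$, decoupling the effect of the perturbation on $\phi_h$ (and hence on $M$) from its effect on $h \circ T^k$, and supplying the transversality needed to control the averaged $t$-energy by a Fubini--Marstrand computation on one-dimensional Lipschitz projections of $M$.
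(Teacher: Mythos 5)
Your Step~1 reduction is fine, and your final fallback (bound an averaged $t$-energy of $\nu_y=(h\circ T^k)_*\mu_{h,y}$ using the polynomial probe set) is indeed the general shape of the paper's argument; but the proposal leaves out precisely the three points where the work happens, and as stated the key estimate fails. First, the claim that for every $t<\min\{1,\hdim T^k\mu-k\}$ the integral $\int \mE_t(\nu_y)\,d\phi_h\mu(y)$ is finite cannot hold in that generality: energy methods only see correlation dimension, which may be strictly smaller than $\hdim T^k\mu$ (an atom plus a high-dimensional piece already ruins it). One must first restrict $\mu$ to pieces $F$ on which $T^k(\mu|_F)$ satisfies a Frostman-type decay and on which the orbit points $x,Tx,\dots,T^kx$ are uniformly separated; this is Proposition~\ref{prop:decomp}, and it is also where the hypothesis $\mu(\Prep_k(T))=0$ is actually consumed. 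Second, and more seriously, your Fubini step averages over the probe parameter $\alpha$ while treating the fiber-product measure $M=\int\mu_{h,y}\otimes\mu_{h,y}\,d\phi_h\mu(y)$ as fixed; but the conditional measures $\mu_{h_\alpha,y}$ are defined through the fibers of $\phi_{h_\alpha}$ and move with $\alpha$ in a way that cannot simply be interchanged with $\int d\alpha$. Saying the probe set ``decouples'' the effect on $\phi_h$ from the effect on $h\circ T^k$ names the difficulty but is not an argument. The paper resolves exactly this by replacing conditional measures with geometric slices $\mu^G_{\alpha,z}$ (limits of $\delta^{-k}\mu|_{\phi_\alpha^{-1}(B(z,\delta))}$), which are amenable to Fatou/Tonelli in $\alpha$; legitimizing that replacement requires first proving $\phi_\alpha\mu\ll\mH^k$ (Theorem~\ref{thm:absolute continuity full}), a separate nontrivial step absent from your outline.

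Third, the transversality input must be quantitative, and it is not what Theorem~\ref{thm:injectivity sections} at level $k+1$ gives (that detour, as you concede, produces slice information in the wrong direction, with an off-by-one loss, and is then abandoned). What is needed is: after conditioning on $\|\phi_\alpha(x)-\phi_\alpha(y)\|\le\delta$ --- which confines $\alpha$, via Lemma~\ref{lem: key_ineq_inter}, to a $\delta/\sigma_k(D_{x,y})$-neighbourhood of an affine copy of $\Ker D_{x,y}$ --- the remaining freedom of $\alpha$ inside $\Ker D_{x,y}$ still moves the difference $h_\alpha(T^kx)-h_\alpha(T^ky)$ at speed bounded below by a constant times $\eps^{2k}\|T^kx-T^ky\|$. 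This is Proposition~\ref{prop:singular value on orbit kernel}; it forces the probe set to contain monomials of degree $2k+1$ (values must be prescribed at $2k+2$ orbit points), and it is what makes the integral over the kernel directions finite via Lemma~\ref{lem:energy_int_ineq} with $p=1$ --- which is also the real source of the cap $\min\{1,\cdot\}$ (the exponent $s-k$ must stay below $1$), not merely the fact that $\nu_y$ lives on a line. Without the decomposition, the passage to geometric slices through absolute continuity, and the kernel singular-value bound with the attendant splitting $\alpha=\beta+\gamma$, $\beta\in\Ker D_{x,y}$, $\gamma\in(\Ker D_{x,y})^\perp$, your proposal is a plan rather than a proof: the missing pieces are exactly the substance of the paper's argument.
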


In Section~\ref{sec:cond measures imply} we show how Theorems~\ref{thm:injectivity sections}--\ref{thm:predictability sections} imply Theorems~\ref{thm:excluding embed and predict}, \ref{thm:excluding embed and predict determ} and~\ref{thm:ssoy2.i hdim}, while the proofs of Theorems~\ref{thm:injectivity sections}--\ref{thm:absolute continuity} are presented in Section~\ref{sec:proof sections}. As an important step in the proofs, in Section~\ref{sec:proof sections} we also show the following result, which might be of independent interest.

\begin{thm}\label{thm:absolute continuity}
Let $X \subset\R^N$, $N \in \N$, be a compact set, let $\mu$ be a Borel probability measure on $X$ and let $T\colon X \to X$ be a Lipschitz map. Fix $k \in \N$ such that $k < \lhdim T^{k-1} \mu$ and $($in the case $k > 1)$ assume $\mu ( \Prep_{k-1}(T) ) = 0$. Then for a prevalent Lipschitz observable $h \colon X \to \R$ and the $k$-delay coordinate map $\phi_h$, the measure $\phi_h \mu$ is absolutely continuous with respect to the $k$-dimensional Lebesgue measure in $\R^k$. 
If, additionally, $\mu$ is $T$-invariant or $T$ is bi-Lipschitz onto its image, then the condition $k < \lhdim T^{k-1}\mu$ may be replaced by $k < \lhdim \mu$ and the condition $\mu(\Prep_{k-1}(T)) = 0$ may be replaced by $\mu\big(\bigcup_{p=1}^{k-1} \Per_p(T)\big) = 0$.

\end{thm}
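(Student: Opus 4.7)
The plan is to establish the absolute continuity via a Fourier $L^2$-criterion combined with the standard prevalence probe of polynomial perturbations and an energy estimate. I fix a finite-dimensional space $V$ of polynomials on $\R^N$ of sufficiently high degree, equipped with a centred Gaussian probability measure $\theta$, and set $h_\alpha = h + \alpha$ for $\alpha \in V$. By the definition of prevalence it suffices to show that $\phi_{h_\alpha}\mu$ is Lebesgue-absolutely continuous for $\theta$-a.e.\ $\alpha$. Using $\lhdim T^{k-1}\mu > k$ and a Frostman-type exhaustion, I produce an increasing sequence of Borel sets $X_n \subset X \setminus \Prep_{k-1}(T)$ with $\mu(X_n) \to \mu(X)$ such that each $T^{k-1}\mu|_{T^{k-1}(X_n)}$ has finite $k$-energy; by monotone convergence it is then enough to prove the absolute continuity of $\phi_{h_\alpha}(\mu|_{X_n})$ for each $n$.

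By Plancherel, absolute continuity (with an $L^2$ density) follows from the integrated bound
\[
\int_V \int_{\R^k} \bigl| \widehat{\phi_{h_\alpha}(\mu|_{X_n})}(\xi)\bigr|^2 \, d\xi \, d\theta(\alpha) < \infty.
\]
Writing $|\widehat{\phi_{h_\alpha}\nu}(\xi)|^2$ as a double integral over $\mu|_{X_n}\otimes\mu|_{X_n}$ and integrating in $\alpha$ first (Fubini), the $\alpha$-integration produces the factor $\widehat\theta(A(x,y)\xi)$, where $A(x,y)$ is the $n \times k$ matrix with entries $A_{ji}(x,y) = p_j(T^ix) - p_j(T^iy)$ for a basis $\{p_j\}$ of $V$. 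Since $\theta$ is Gaussian, $|\widehat\theta(A\xi)| = e^{-c|A\xi|^2}$, hence
\[
\int_{\R^k} \bigl|\widehat\theta(A(x,y)\xi)\bigr| \, d\xi \;=\; \frac{C}{\sqrt{\det(A(x,y)^T A(x,y))}}.
\]

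The central technical step is a transversality lemma: by choosing $V$ rich enough (say, containing all monomials up to a sufficiently high degree), one shows that for all $x, y \in X_n$,
\[
\sqrt{\det(A(x,y)^T A(x,y))} \;\geq\; c \prod_{i=0}^{k-1} |T^ix - T^iy| \;\geq\; c' |T^{k-1}x - T^{k-1}y|^k,
\]
where the second inequality uses the Lipschitz bound $|T^ix - T^iy| \geq L^{-(k-1-i)} |T^{k-1}x - T^{k-1}y|$. The hypothesis $\mu(\Prep_{k-1}(T))=0$ enters here to ensure that for $\mu\otimes\mu$-a.e.\ $(x,y)$ the $2k$ orbit points $x, Tx, \ldots, T^{k-1}x, y, Ty, \ldots, T^{k-1}y$ are pairwise distinct, so that a Lagrange/Vandermonde-type interpolation within $V$ provides columns of $A(x,y)$ that are quantitatively linearly independent with the stated Gram bound. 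Plugging this estimate into the previous display and applying Fubini once more yields
\[
\int_V \int_{\R^k} \bigl|\widehat{\phi_{h_\alpha}(\mu|_{X_n})}(\xi)\bigr|^2 d\xi\, d\theta(\alpha) \;\lesssim\; \iint_{X_n \times X_n}\frac{d\mu(x)\,d\mu(y)}{|T^{k-1}x-T^{k-1}y|^k} \;=\; I_k\bigl(T^{k-1}\mu|_{T^{k-1}(X_n)}\bigr) \;<\; \infty,
\]
which completes the proof of the main case.

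The alternative hypotheses are handled by minor substitutions: if $T$ is bi-Lipschitz onto its image then $|T^{k-1}x-T^{k-1}y|\gtrsim |x-y|$, so the weaker assumption $\lhdim \mu > k$ suffices and $\Prep_{k-1}(T)$ may be replaced by $\bigcup_{p=1}^{k-1}\Per_p(T)$; if $\mu$ is $T$-invariant, then $T^{k-1}\mu = \mu$ and the exhaustion is carried out directly on $X$ after removing short-period orbits, which is permissible by invariance. The principal obstacle is the quantitative transversality lemma: one must show that the Gram determinant of the columns of $A(x,y)$ is bounded below by a constant multiple of $|T^{k-1}x-T^{k-1}y|^k$ uniformly on $X_n\times X_n$, using only the distinctness of orbit points provided by the pre-periodicity hypothesis. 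This delicate step distinguishes the argument from a standard Marstrand projection theorem and is what yields the dimension threshold in terms of $T^{k-1}\mu$ rather than $\mu$.
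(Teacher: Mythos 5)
Your overall strategy (Gaussian probe set plus the Plancherel/energy criterion) is a legitimate Fourier-analytic variant of the paper's argument: after the $\alpha$-integration both routes reduce to showing that $\iint \sigma_k(D_{x,y})^{-k}\,d\mu(x)\,d\mu(y)$ is controlled by the $k$-energy of $T^{k-1}\mu$ restricted to suitable pieces, the paper reaching the same point via the lower-density criterion for absolute continuity and Lemma~\ref{lem: key_ineq_inter} instead of characteristic functions. The difficulty is your ``central technical step''.

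The claimed transversality bound $\sqrt{\det(A(x,y)^TA(x,y))}\geq c\prod_{i=0}^{k-1}\|T^ix-T^iy\|$, with $c$ uniform on $X_n\times X_n$, is false, and pairwise distinctness of the $2k$ orbit points (which is all that $\mu(\Prep_{k-1}(T))=0$ provides, and only almost surely) cannot repair it: distinctness gives no quantitative lower bound, and the Gram determinant genuinely degenerates whenever two orbit points with \emph{different} time indices, one from each orbit, come close, even though all same-index distances $\|T^ix-T^iy\|$ stay bounded below. Concretely, take $k=2$, $N=1$, and a point $x$ with $y=Tx$ and $T^2x$ very close to $x$ (near-$2$-periodic points are not excluded by $\mu(\Per_1(T))=0$, nor by the finite-energy condition defining $X_n$). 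Writing $v(a,b)=(a^j-b^j)_j$, the two rows of $D_{x,y}$ are $v(x,Tx)$ and $v(Tx,T^2x)$, and as $T^2x\to x$ the second tends to $-v(x,Tx)$, so $\sigma_2(D_{x,y})\to 0$ while $\|x-y\|$ and $\|Tx-Ty\|$ remain bounded below. What the Lagrange/Vandermonde interpolation actually yields is the paper's Proposition~\ref{prop:singular value on orbit} (via Lemma~\ref{lem:SY97}): $\sigma_k(D_{x,y})\geq C\eps^{2k-2}\|T^{k-1}x-T^{k-1}y\|$, where $\eps$ is the minimal distance among all pairs of the $2k$ points other than the same-index pairs; this $\eps$ is not bounded below on $X_n\times X_n$, and the resulting factor $\eps^{-k(2k-2)}$ destroys the integrability of your final double integral. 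This is precisely why the paper does not use an energy-only exhaustion: Proposition~\ref{prop:decomp} covers $\mu$-almost all of $X$ by countably many compact pieces $F$ which simultaneously satisfy $\cdim T^{k-1}(\mu|_F)>k$ \emph{and} are contained in balls so small (radius of order $\frac{1}{4q}(\Lip(T))^{-(k-1)}$ around points whose own orbit segments are $1/q$-separated) that the cross-orbit separation $\|T^ix-T^jy\|\geq\eps(F)$ for $i\neq j$ holds uniformly for $x,y\in F$. With that localization your computation goes through, with constants depending on $F$, which is harmless since absolute continuity is checked piece by piece; without it the key inequality, and hence the proof, fails. Your reductions for the bi-Lipschitz and $T$-invariant cases are fine and agree with the paper's.
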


In the appendix to this paper, we prove an additional result (Theorem~\ref{thm:loc dim}) on the local dimensions of the measure $\phi_h \mu$, which extends \cite[Theorem~3.5]{SauerYorke97} (see also \cite[Remark~4.4]{SauerYorke97} for a version concerning delay coordinate maps) to a general setup of Lipschitz systems and non-invariant measures.

\subsection{Note on the proofs}

The proofs of Theorems~\ref{thm:injectivity sections}--\ref{thm:absolute continuity} are adaptations of the proofs of Marstrand--Mattila-type projection theorems from the classical theory of orthogonal projections (see \cite{mattila} for a comprehensive study) to the dynamical setting of delay coordinate maps. More specifically, the proofs of Theorems~\ref{thm:injectivity sections} and~\ref{thm:predictability sections} are inspired by the proofs of the slicing theorem for measures \cite[Theorem~3.3]{JMSections} (see also \cite[Theorem~10.7]{mattila}), while Theorem~\ref{thm:absolute continuity} is modelled after Marstrand--Mattila projection theorem for measures \cite[Theorem~6.1]{HuTaylorProjections} (see also \cite[Theorem~9.7]{mattila}). In the dynamical setting of delay coordinate maps, the two key elements of the proofs are establishing a correct version of \emph{transversality property} with respect to the parameter and treating carefully the obstructions arising from the existence of (pre)periodic points -- see Section~\ref{sec:technic} for details. Refer to \cite{SolomyakTransversSurvey} for an exposition of the concept of transversality in the context of orthogonal projections and iterated function systems. In particular, see \cite[Section 4.2]{SolomyakTransversSurvey} for an explanation of the terminology. A similar task of transferring Marstrand--Mattila projection theorem on dimension preservation \cite{Marstrand,Mattila-proj} to the setup of delay coordinate maps was performed by Sauer and Yorke in \cite{SauerYorke97} and we extend their result in Appendix~\ref{app:loc dim}.

\subsection{Structure of the paper} In Section~\ref{sec:prelim} we provide necessary definitions and results concerning, among others, several notions of dimensions of sets and measures used in this paper, as well as a discussion on the notion of prevalence. In Section~\ref{sec:cond measures imply} we
describe relations between conditional measures with respect to the delay coordinate map and injectivity/predictability properties of observables, 
proving Lemma~\ref{lem: cond meas iff} and showing how Theorems~\ref{thm:injectivity sections}--\ref{thm:predictability sections} imply Theorems~\ref{thm:excluding embed and predict}, \ref{thm:excluding embed and predict determ} and~\ref{thm:ssoy2.i hdim}. Section~\ref{sec:technic} introduces the key technical tools used in the proofs of the main results of the paper. More precisely, in Subsection~\ref{subsec:energy} we provide estimates of some `energy type' integrals, Subsection~\ref{subsec:matrices} contains basic facts on observation matrices that are used for checking prevalence of suitable observables, in Subsection~\ref{subsec:decomp} we describe a convenient decomposition of the phase space $X$, Subsection~\ref{subsec:restricted cond meas} considers `restricted' conditional measures, while Subsection~\ref{subsec:slices} introduces `geometric slices' of the measure and discusses their relation to the system of conditional measures. In Section~\ref{sec:proof sections} we prove the results on dimensions of conditional measures for the measure $\mu$ with respect to the delay coordinate map (Theorems~\ref{thm:injectivity sections}--\ref{thm:predictability sections}) together with Theorem~\ref{thm:absolute continuity}. In Section~\ref{sec:counterexample} we prove Theorem~\ref{thm:counterexample}, by providing suitable examples. Section~\ref{sec:examples} contains a discussion on the assumptions within the main results of paper, presenting several examples showing their necessity. Finally, in Appendix~\ref{app:loc dim}, we prove results on the local dimensions of the push-forward of the measure $\mu$ by the delay coordinate maps (Theorem~\ref{thm:loc dim}, Corollary~\ref{cor:loc dim} and Theorem~\ref{thm:loc dim full}).

\section{Preliminaries}\label{sec:prelim}

By $\N$ we denote the set of positive integers. The symbols $\| \cdot \|$, $\dist(\cdot, \cdot)$ and $| \cdot |$ denote, respectively, the Euclidean norm, distance and diameter in $\R^N$, $N \in \N$. We set $\|(x_1, \ldots, x_N)\|_{\infty} = \max\{|x_1|,  \ldots, |x_N|\}$ for $(x_1, \ldots, x_N) \in \R^N$. The open $r$-ball around a point $x \in \R^N$ is denoted by $B_N(x, r)$ and the closed ball by $\overline{B_N}(x, r)$ (sometimes we omit the dimension $N$ in notation). By $\Leb$ we denote the Lebesgue measure (or the outer Lebesgue measure, in the case of non-measurable sets). 

\subsection{Singular values}\label{subsec:sing_values}

Let $\psi \colon \R^m \to \R^k$ be a linear map and let $A$ be the matrix of $\psi$. For $p \in \{1, \ldots, k\}$ let $\sigma_p(A)$ (we also use the symbol $\sigma_p(\psi)$) be the $p$th largest \emph{singular value} of $A$, i.e.~the $p$th largest square root of an eigenvalue of the matrix $A^*A$ (counted with multiplicities).  It is well-known (see e.g.~\cite [Lemma 14.2]{Rob11}) that the the rank of $A$ equals the number of the non-zero singular values of $A$.

For $k \le m$, the matrix $A$ admits the \emph{singular value decomposition}  in the form $A = U\Sigma V^T$, where $U$ is a $k \times k$ orthogonal matrix, $V$ is an $m \times m$ orthogonal matrix, and $\Sigma$ is the $k \times m$ rectangular diagonal matrix of the form
\[
\Sigma = 
\left[\begin{array}{@{}c|c@{}}
  \begin{matrix}
  \sigma_1(A) & &0\\
   & \ddots &\\
   0& & \sigma_k(A)
  \end{matrix}
  &\text{\; \LARGE $0$\;} \\
\end{array}\right].
\]

We will use the following lemma, proved as \cite[Lemma~4.2]{SYC91} (see also \cite[Lemma~14.3]{Rob11}).

\begin{lem}\label{lem: key_ineq_inter}
Let $\psi \colon  \R^m \to \R^k$ be a linear transformation. Assume $\sigma_p(\psi) > 0$ for some $p \in \{1, \ldots, k\}$. Then for every $z \in \R^k$ and $\rho, \eps > 0$,
\[ \frac{\Leb(\{ \alpha \in B_m(0, \rho) : \|\psi(\alpha) + z \| \leq \eps \})}{\Leb (B_m(0, \rho))} \leq C \Big(\frac{\eps}{\sigma_p(\psi) \, \rho}\Big)^p, \]
where $C > 0$ depends only on $m,k$.
\end{lem}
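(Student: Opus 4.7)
The plan is to reduce the problem to a diagonal, product-type computation via the singular value decomposition recalled in Subsection~\ref{subsec:sing_values}. Writing the matrix $A$ of $\psi$ as $A = U\Sigma V^T$ with $U$ a $k\times k$ orthogonal matrix, $V$ an $m\times m$ orthogonal matrix, and $\Sigma$ the $k\times m$ rectangular diagonal matrix whose diagonal carries the singular values $\sigma_1(A)\ge\sigma_2(A)\ge\cdots$, I would apply the orthogonal change of variables $\alpha' = V^T\alpha$ (which preserves Lebesgue measure and leaves $B_m(0,\rho)$ invariant) and set $z' = U^T z\in\R^k$. Since $U$ is orthogonal, the defining constraint $\|\psi(\alpha)+z\|\le\eps$ becomes $\|\Sigma\alpha' + z'\|\le\eps$.

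The next step is to extract coordinatewise bounds from this Euclidean inequality. The hypothesis $\sigma_p(\psi)>0$ automatically forces $p\le\min(k,m)$, and for each $i=1,\dots,p$ the $i$-th coordinate of $\Sigma\alpha'+z'$ equals $\sigma_i(A)\alpha'_i + z'_i$; hence $|\sigma_i(A)\alpha'_i + z'_i|\le\eps$. Using $\sigma_i(A)\ge\sigma_p(A)$ for $i\le p$, each of these coordinates is then confined to an interval of length at most $2\eps/\sigma_p(A)$. The remaining coordinates $\alpha'_{p+1},\dots,\alpha'_m$ are constrained only by $\alpha'\in B_m(0,\rho)$, so they lie in $[-\rho,\rho]$.

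The conclusion follows by enclosing the admissible set of $\alpha'$'s in the resulting $m$-dimensional product box, whose Lebesgue measure is at most $(2\eps/\sigma_p(A))^p\,(2\rho)^{m-p}$, and dividing by $\Leb(B_m(0,\rho))$, which equals a constant (depending only on $m$) times $\rho^m$. This immediately yields the desired inequality with $C$ depending only on $m$. Since the lemma is essentially a linear-algebra fact, I do not anticipate any genuine obstacle; the only minor subtlety is to treat the cases $k\le m$ and $k>m$ uniformly, which is accommodated by the rectangular form of the SVD together with the observation that $\sigma_p(\psi)>0$ guarantees $p\le\min(k,m)$.
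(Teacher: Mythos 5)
Your proof is correct, and it is essentially the standard argument behind this lemma: the paper does not prove it but cites \cite[Lemma~4.2]{SYC91} and \cite[Lemma~14.3]{Rob11}, where the same reduction via the singular value decomposition to a coordinatewise estimate and a box-volume comparison is used. Your handling of the case $k>m$ through the observation that $\sigma_p(\psi)>0$ forces $p\le\min(k,m)$ is also fine, so no changes are needed.
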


\subsection{Measures}\label{subsec:meas}

Let $\mu$ be a Borel measure on a Borel set $X \subset \R^N$. A Borel set $Y \subset X$ is called a \emph{full-measure} subset of $X$, if $\mu (X \setminus Y) = 0$. By $\supp \mu$ we denote the (topological) \emph{support} of $\mu$, which is the smallest closed subset of full $\mu$-measure. \emph{Dirac's measure} at a point $x$, denoted by $\delta_x$, is the Borel probability measure supported on $\{x\}$. For Borel measures $\mu, \nu$ in $\R^N$ we write $\nu \ll \mu$, if $\nu$ is \emph{absolutely continuous} with respect to $\mu$, i.e.~if $\nu(E) = 0$ whenever $\mu(E) = 0$ for Borel sets $E \subset \R^N$. 
For a Borel map $\phi\colon X \to \R^k$, $k \in \N$, by $\phi\mu$ we denote the \emph{push-forward} of $\mu$ under $\phi$, defined by $\phi\mu(E) = \mu(\phi^{-1}(E))$ for Borel sets $E \subset \R^k$. 

Let $T\colon X \to X$ be a Borel map. The measure $\mu$ is $T$-\emph{invariant}, if $\mu(T^{-1}(E)) = \mu(E)$ for every Borel set $E \subset X$. The measure $\mu$ is \emph{ergodic}, if for every Borel set $E \subset X$ such that $T^{-1}(E) = E$, there holds $\mu(E) = 0$ or $\mu(X \setminus E) = 0$.

\begin{defn}[{\bf Natural measure}{}]\label{defn:nat_measure}
Let $M$ be a compact Riemannian manifold and $T \colon M \to M$ a $C^1$-diffeomorphism. A compact $T$-invariant set $X \subset M$ is called an \emph{attractor}, if there exists an open set $B \subset M$ containing $X$, such that $\lim_{n \to \infty} \dist(T^n x, X) = 0$ for every $x \in B$. The largest such set $B(X)$ is called the (\emph{maximal}) \emph{basin of attraction} of $X$. A $T$-invariant Borel probability measure $\mu$ on $X$ is called a \emph{natural measure} if 
	\[ \lim_{n \to \infty} \frac{1}{n} \sum_{i=0}^{n-1} \delta_{T^i x} = \mu \]
	for almost every $x \in B(X)$ with respect to the volume measure on $M$, where   the limit is taken in the weak-$^*$ topology.
\end{defn}

\subsection{Prevalence}\label{subsec:preval} 
As noted in the introduction, we understand the genericity in the space of Lipschitz observables on compact sets in the sense of prevalence -- a notion introduced by Hunt, Sauer and Yorke in \cite{Prevalence92}, which may be considered as an analogue of `Lebesgue almost sure' condition in infinite dimensional normed linear spaces. 

\begin{defn}\label{defn:preval} By a (complete) \emph{linear metric} on a linear space we mean a (complete) metric which makes addition and scalar multiplication continuous. Let $V$ be a complete linear metric space (i.e.~a linear space with a complete linear metric). A Borel set $\mathcal S \subset V$ is called \emph{prevalent} if there exists a Borel measure $\nu$ in $V$, which is positive and finite on some compact set in $V$, such that for every $v \in V$, there holds $v + e \in \mathcal S$ for $\nu$-almost every $e \in V$. A non-Borel subset of $V$ is prevalent if it contains a prevalent Borel subset. For more information on prevalence we refer to \cite{Prevalence92} and \cite[Chapter~5]{Rob11}.
\end{defn}

For a compact set $X \subset \R^N$, $N \in \N$ we consider the space $\Lip(X)$ of Lipschitz functions on $X$. Recall that a function $h\colon X \to \R$ is \emph{Lipschitz} if there exists $L > 0$ such that $|h(x) - h(y)| \leq L \|x-y\|$ for every $x, y \in X$. We set 
\[
\|h \|_{\Lip(X)} = \sup_{x \in X} |h(x)| + \Lip(h), \quad \text{where } \Lip(h) = \sup_{\substack{x, y \in X \\ x \neq y}} \frac{|h(x) - h(y)|}{\|x-y\|}
\]
to be the \emph{Lipschitz norm} on $\Lip(X)$. With this norm, the space $\Lip(X)$ is a Banach space (in particular, a complete linear metric space). In this paper we use the notion of prevalence in the sense of Definition~\ref{defn:preval} applied to $V = \Lip(X)$. In fact, similarly as in \cite{SYC91, Rob11, BGS22, BGSPredict}, we as the measure $\nu$ in Definition~\ref{defn:preval} we employ $\nu = \xi\Leb$ for $\xi \colon \R^m \to \Lip(X)$, $\xi(\alpha_1, \ldots, \alpha_m) = \sum_{j=1}^m \alpha_j h_j$, where $\{h_1, \ldots, h_m\}$ (called the \emph{probe set}) is the set of all real monomials of $N$ variables of degree at most $d$ for $d = 2k+1$ (in fact, most of the results proved in this paper hold also for $d =2k-1$), and $\Leb$ is the $k$-dimensional Lebesgue measure in $\R^k$.
Then a set $\mathcal S \subset \Lip(X)$ is prevalent if for every $h \in\Lip(X)$, the function $h + \sum_{j=1}^m \alpha_j h_j$ is in $\mathcal S$ for Lebesgue-almost every $(\alpha_1, \ldots, \alpha_m) \in \R^m$. Note that whenever prevalence in $\Lip(X)$ is established via a probe set consisting of polynomials as above, it also provides prevalence in the spaces $C^r(X)$, $r=1,2,\ldots, \infty$.

\subsection{Dimensions}\label{subsec:dim} For convenience, we present the definitions of all notions of dimensions that appear in this paper.

\begin{defn}\label{defn:dim}
	
	For $s>0$, the \emph{$s$-dimensional $($outer$)$ Hausdorff measure} of a set $X \subset \R^N$ is defined  as
	\[ \mH^s(X) = \lim_{\delta \to 0}\ \inf \Big\{ \sum_{i = 1}^{\infty} |U_i|^s : X \subset \bigcup_{i=1}^{\infty} U_i,\ |U_i| \leq \delta  \Big\}.\]
	The \emph{Hausdorff dimension} of $X$ is given as
	\[ \hdim X = \inf \{ s > 0 : \mathcal{H}^s(X) = 0 \} = \sup \{ s > 0 : \mathcal{H}^s(X) = \infty \} \]
with the convention $\sup \emptyset = 0$. 	
\end{defn}

\begin{defn}\label{defn:bdim}
For a bounded set $X \subset \R^N$ and $\delta>0$, let $N(X, \delta)$ denote the minimal number of balls of diameter at most $\delta$ required to cover $X$. The \emph{lower} and \emph{upper box-counting $($Minkowski$)$ dimensions} of $X$ are defined, respectively, as
\[ \ldim_B X = \liminf_{\delta \to 0} \frac{\log N(X,\delta)}{-\log \delta},\qquad \udim X = \limsup_{\delta \to 0} \frac{\log N(X,\delta)}{-\log \delta}. \]
If $\ldim_B X = \udim X$, then their common value is denoted by $\bdim X$ and called the \emph{box dimension} of $X$.
\end{defn}

\begin{defn}
Let $\mu$ be a finite Borel measure on $\R^N$. The \emph{lower} and \emph{upper local dimensions} of $\mu$ at a point $x \in  \supp \mu$ are defined, respectively, as
	\[ \underline{d}(\mu, x) = \liminf_{\delta \to 0} \frac{\log \mu(B(x,\delta))}{\log \delta},\qquad  \overline{d}(\mu, x) = \limsup_{\delta \to 0} \frac{\log \mu(B(x,\delta))}{\log \delta}.\]
If $\underline{d}(\mu, x) = \overline{d}(\mu, x)$, then their common value is denoted by $d(\mu, x)$ and called the \emph{local dimension} of $\mu$ at $x$. If $d(\mu, x)$ exists and equals to some $d$ for $\mu$-almost every $x$, then we say that $\mu$ is \emph{exact dimensional} with dimension $d$.
	
	The \emph{upper} and \emph{lower Hausdorff dimension} of $\mu$ are defined, respectively,  as 
\begin{align*}
\hdim \mu &= \inf \{ \hdim E: \mu(\R^N \setminus E) = 0 \} = \underset{x \sim \mu}{\mathrm{ess\ sup}}\ \underline{d}(\mu, x),\\
\lhdim \mu &= \inf \{ \hdim E: \mu(E) > 0 \} = \underset{x \sim \mu}{\mathrm{ess\ inf}}\ \underline{d}(\mu, x).
\end{align*}
See \cite[Propositions 10.2--10.3]{FalconerTechniques} for the proof of the equivalence of both variants of the definitions. 
\end{defn}

The definitions immediately imply that for finite Borel measures $\mu$, $\nu$ on $\R^N$ the following holds.
\[
\text{If } \;  \nu \ll \mu, \; \text{ then } \; \hdim \nu \leq \hdim \mu \; \text{ and } \; \lhdim \nu \geq \lhdim \mu.
\]

\begin{defn}
For a Borel probability measure $\mu$ in $\R^N$ with compact support, its \emph{lower} and \emph{upper information dimensions} are
\[ \lid(\mu) = \liminf_{\eps \to 0} \int_{\supp \mu} \frac{\log \mu(B(x,\eps))}{\log \eps} d\mu(x), \qquad\uid(\mu) = \limsup_{\eps \to 0} \int_{\supp \mu} \frac{\log \mu(B(x,\eps))}{\log \eps} d\mu(x).\]
If $\lid(\mu) = \uid(\mu)$, then we denote their common value as $\idim (\mu)$ and call it the \emph{information dimension} of $\mu$. 
\end{defn}

\begin{defn}\label{defn:energy}
	For a finite Borel measure $\mu$ in $\R^N$ and $s>0$, the $s$-\emph{potential} of $\mu$ at a point $x \in \R^N$ is defined as
	\[ \mE_s(\mu, x) = \int_{\R^N}  \frac{d\mu(y)}{\|x - y\|^s}, \]
while the $s$-\emph{energy} of $\mu$ is 
	\[ \mE_s(\mu) = \int_{\R^N} \mE_s(\mu, x) d\mu(x) =  \int_{\R^N}\int_{\R^N}  \frac{d\mu(x)d\mu(y)}{\|x - y\|^s}.\]
	The \emph{correlation dimension} of $\mu$ is defined as
	\[\cdim \mu = \sup \{ s > 0 : \mE_s(\mu) < \infty \},\]
	with the convention $\sup \emptyset = 0$. 
For an explanation of this terminology see \cite[$\S$2]{SauerYorke97} and \cite[Section 17]{pesin2008dimension}. 
Note that if $\mu$ has an atom, then $\cdim \mu = 0$. It turns out that the lower local dimension can be characterized in terms of the local potentials as follows (see \cite[Section 3.2]{SauerYorke97} and \cite[Section 4]{HuntKaloshin97}).
	
	\end{defn}

\begin{lem}\label{lem:loc dim energy} Let $\mu$ be a finite Borel measure in $\R^N$. Then for every $x \in \supp \mu$,
\[ \ld(\mu, x) = \sup \{ s>0 : \mE_s(\mu, x) < \infty \}.\]
\end{lem}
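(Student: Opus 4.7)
Write $F(r) = \mu(B(x,r))$ and let $d = \ld(\mu,x)$, $e = \sup\{s>0 : \mE_s(\mu,x) < \infty\}$. The starting point is the layer-cake identity
\[
\mE_s(\mu,x) \;=\; \int_{\R^N} \|x-y\|^{-s} \, d\mu(y) \;=\; s \int_0^\infty F(r)\, r^{-s-1}\, dr,
\]
which follows from Fubini applied to $\|x-y\|^{-s} = s\int_{\|x-y\|}^\infty r^{-s-1}\,dr$. Once this identity is in hand, both inequalities become pure calculus estimates on a monotone function $F$ whose behavior near $0$ is controlled by $d$. The plan is to prove $e \ge d$ and $e \le d$ separately; the case when $\mu$ has an atom at $x$ is automatic, since then $d=0$ and $\mE_s(\mu,x) = \infty$ for every $s>0$, so both sides equal $0$ by the convention $\sup\emptyset = 0$.

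For the inequality $e \ge d$, I would fix any $s < d$ (nothing to show if $d=0$) and pick $t$ with $s < t < d$. The definition of the lower local dimension as a $\liminf$ gives some $\delta_0 > 0$ such that $F(r) \le r^{t}$ for all $r < \delta_0$. Splitting the integral at $\delta_0$, the tail $\int_{\delta_0}^\infty F(r) r^{-s-1} dr$ is bounded by $\mu(\R^N)\,\delta_0^{-s}/s$, and near zero
\[
\int_0^{\delta_0} F(r)\, r^{-s-1}\, dr \;\le\; \int_0^{\delta_0} r^{t-s-1}\, dr \;=\; \frac{\delta_0^{t-s}}{t-s} \;<\; \infty
\]
since $t-s > 0$. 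Hence $\mE_s(\mu,x) < \infty$, giving $e \ge s$; letting $s \nearrow d$ yields $e \ge d$.

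For the reverse inequality $e \le d$, I would fix $s > d$ and choose $t$ with $d < t < s$. The $\liminf$ definition of $d$ produces a sequence $\delta_n \downarrow 0$ with $F(\delta_n) \ge \delta_n^{t}$. Passing to a subsequence (still denoted $\delta_n$) with $2\delta_{n+1} < \delta_n$ so that the intervals $[\delta_n, 2\delta_n]$ are pairwise disjoint, and using monotonicity of $F$ on each such interval, one obtains
\[
\mE_s(\mu,x) \;\ge\; s \sum_{n} \int_{\delta_n}^{2\delta_n} F(\delta_n)\, r^{-s-1}\, dr \;\ge\; c \sum_n \delta_n^{\,t}\, \delta_n^{-s} \;=\; c \sum_n \delta_n^{\,t-s},
\]
for a constant $c>0$ depending only on $s,t$. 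Since $t-s < 0$ and $\delta_n \to 0$, each term tends to $+\infty$, so the series diverges and $\mE_s(\mu,x) = \infty$; this gives $e \le s$ for every $s > d$, hence $e \le d$.

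The only genuinely delicate point is the subsequence extraction in the lower bound, which is needed to avoid double-counting when the intervals $[\delta_n, 2\delta_n]$ overlap; apart from this, the argument is a routine Fubini-plus-monotonicity computation. Everything else (splitting at $\delta_0$, bounding the tail, handling atoms via the convention) is bookkeeping.
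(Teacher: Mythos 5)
Your proof is correct, and since the paper does not prove this lemma itself but cites \cite[Section 3.2]{SauerYorke97} and \cite[Section 4]{HuntKaloshin97}, your layer-cake-plus-splitting argument is exactly the standard one those references use (Fubini to get $\mE_s(\mu,x)=s\int_0^\infty \mu(B(x,r))r^{-s-1}dr$, a split at $\delta_0\le 1$ for the upper-tail direction, and disjoint dyadic-type intervals $[\delta_n,2\delta_n]$ along a subsequence for the divergence direction). The only cosmetic points are to take $\delta_0\le 1$ so the inequality $\frac{\log F(r)}{\log r}>t$ correctly flips to $F(r)\le r^t$, and to note that $x\in\supp\mu$ guarantees $F(\delta_n)>0$ so the liminf characterization makes sense; neither affects the argument.
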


\begin{rem}
If $\mu$ is a finite Borel measure on a bounded Borel set $X$ in $\R^N$, then
\[ \lhdim \mu \leq \hdim \mu \leq \hdim X \leq \ldim_B X \leq \udim X \]
and
\[ \lid(\mu)  \leq \ldim_B X, \qquad \uid(\mu) \leq \udim X. \]
In general, there are no relations between Hausdorff and information dimensions. However, it is known that if $\mu$ is a $T$-invariant ergodic measure for a Lipschitz map $T\colon X \to X$, then $\hdim \mu \leq \lid(\mu) \leq \uid(\mu)$ (see \cite[Proposition~2.1]{BGS22}).
\end{rem}

The relation between the Hausdorff and correlation dimension is not so direct, but one has the following result (see e.g. \cite[Theorem~8.7 and Remark~8.6]{mattila}).

\begin{lem}\label{lem:corr to Hausdorff}
Let $E \subset \R^N$ be a Borel set. If $\mu$ is a finite Borel measure on $E$ satisfying $\mE_s(\mu) < \infty$ for some $s>0$, then $\hdim E \geq s$. Consequently, $\lhdim \mu \geq \cdim \mu$.
\end{lem}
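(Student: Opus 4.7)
The plan is to combine Fubini's theorem applied to the double integral defining $\mE_s(\mu)$ with the potential-theoretic characterization of lower local dimensions supplied by Lemma~\ref{lem:loc dim energy}. First, given $\mE_s(\mu) < \infty$, Fubini's theorem applied to the non-negative integrand $\|x-y\|^{-s}$ yields that the $s$-potential
\[
\mE_s(\mu, x) = \int_{\R^N} \frac{d\mu(y)}{\|x-y\|^s}
\]
is finite for $\mu$-almost every $x \in \R^N$, since the outer $x$-integral of a non-negative $\mu$-integrable function must be finite almost everywhere.

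Second, I would invoke Lemma~\ref{lem:loc dim energy} to translate this into a pointwise bound on the lower local dimension: at every such $x$ the defining supremum for $\ld(\mu, x)$ is at least $s$, hence $\ld(\mu, x) \geq s$ for $\mu$-almost every $x$. Using the equivalent formulation $\lhdim \mu = \underset{x \sim \mu}{\mathrm{ess\,inf}}\, \ld(\mu, x)$ recalled in the definition of $\lhdim \mu$, this forces $\lhdim \mu \geq s$. Since $\mu$ is supported on $E$ (so that $\mu(\R^N \setminus E) = 0$), the chain $\lhdim \mu \leq \hdim \mu \leq \hdim E$ holds, where the first inequality is the trivial $\mathrm{ess\,inf} \leq \mathrm{ess\,sup}$ applied to $\ld(\mu, \cdot)$, and the second follows directly from the definition of $\hdim \mu$ as an infimum over Borel sets of full $\mu$-measure. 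Combining these yields $\hdim E \geq s$, which is the first assertion.

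For the consequence $\lhdim \mu \geq \cdim \mu$, the argument above actually produces $\lhdim \mu \geq s$ for every $s > 0$ with $\mE_s(\mu) < \infty$. Taking the supremum over all such $s$ and recalling $\cdim \mu = \sup\{s > 0 : \mE_s(\mu) < \infty\}$ gives the desired inequality immediately. I do not anticipate any substantial obstacle here: the entire argument is a direct composition of Fubini's theorem with the previously stated potential-theoretic characterization of $\ld(\mu, x)$, and no new geometric or dynamical input is required.
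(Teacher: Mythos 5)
Your argument is correct, but it is worth noting that the paper does not prove this lemma at all: it simply cites Mattila (Theorem~8.7 and Remark~8.6), where the standard proof runs differently — one restricts $\mu$ to a set on which the potential $\mE_s(\mu,\cdot)$ is bounded, deduces the Frostman-type growth bound $\mu(B(x,r)) \lesssim r^s$ there, and concludes $\mathcal{H}^s(E) > 0$ (in fact $=\infty$) by the mass distribution principle. You instead compose two facts already recorded in the preliminaries: the easy direction of Lemma~\ref{lem:loc dim energy} (finiteness of $\mE_s(\mu,x)$ forces $\ld(\mu,x)\geq s$, and since the paper defines $\mE_s(\mu)=\int \mE_s(\mu,x)\,d\mu(x)$, finiteness of the energy immediately gives this for $\mu$-a.e.\ $x$, all of which lie in $\supp\mu$), together with the characterizations $\lhdim\mu = \mathrm{ess\,inf}\,\ld(\mu,\cdot)$ and $\hdim\mu = \mathrm{ess\,sup}\,\ld(\mu,\cdot) \leq \hdim E$. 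This is legitimate and shorter, but the geometric content of the classical proof has not disappeared — it is hidden inside the cited equivalence of the two definitions of $\lhdim\mu$ and $\hdim\mu$ (Falconer, Propositions~10.2--10.3), which is itself a Billingsley/Frostman-type statement; the Mattila route proves the stronger conclusion $\mathcal{H}^s(E)=\infty$ directly without that input. The only (shared, implicit) caveat is that $\mu$ must be nonzero for the statement to have content. The derivation of $\lhdim\mu \geq \cdim\mu$ by taking the supremum over admissible $s$ is exactly as intended.
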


For more information on dimension theory in Euclidean spaces see \cite{falconer2014fractal, mattila, Rob11}.

\section{Relations between conditional measures and injectivity/predictability properties} \label{sec:cond measures imply}

In this section we prove Lemma~\ref{lem: cond meas iff} and explain how Theorems~\ref{thm:excluding embed and predict}, \ref{thm:excluding embed and predict determ} and \ref{thm:ssoy2.i hdim} follow from Theorems~\ref{thm:injectivity sections}--\ref{thm:predictability sections}.

\begin{proof}[Proof of Lemma~\rm\ref{lem: cond meas iff}] To show assertion~(a), suppose first that $\mu_{h,y}$ is a Dirac's measure for $\phi_h\mu$-almost every $y \in \R^k$. This means that for $\phi_h\mu$-almost every $y \in \R^k$ there exists a (unique) $f(y) \in X$ such that $\mu_{h,y} = \delta_{f(y)}$. Then by \eqref{eq:cond meas decomp}, for every $\mu$-measurable set $E \subset X$, the set $\{y \in \R^k: f(y) \in E\}$ is $\phi_h\mu$-measurable, so $f$ is  measurable. Therefore the set 
\[
\tilde X_h = \{x \in X: f(\phi_h(x)) = x\}
\]
is $\mu$-measurable. Note that by \eqref{eq:cond meas inverse}, we have $f(y) \in \phi_h^{-1}(y)$ for $\phi_h \mu$-a.e. $y$, hence also  $\phi_h(f({\phi_h(x)})) = \phi_h(x)$ for $\mu$-almost every $x \in X$. Applying $f$ to both sides of      the last equality yields $f({\phi_h(x)}) \in \tilde X_h$ for $\mu$-almost every $x \in X$. Consequently, \eqref{eq:cond meas decomp} gives
\[\begin{aligned}
\mu(\tilde X_h) & = \int_{\R^k} \mu_{h,y}(\tilde X_h) d(\phi_h\mu)(y) =  \int_{\R^k} \delta_{f(y)}(\tilde X_h) d(\phi_h\mu)(y) \\
& =  \int_X \delta_{f(\phi_h(x))}(\tilde X_h) d\mu(x) = \int_X 1 d\mu(x) = 1,
\end{aligned}
\]
so $\tilde X_h$ is a full $\mu$-measure set. If $x,y \in \tilde X_h$ and $\phi_h(x) = \phi_h(y)$, then $x = f(\phi_h(x)) = f(\phi_h(y)) = y$, hence $\phi_h$ is injective on $\tilde X_h$. By the regularity of $\mu$, the set $\tilde X_h$ contains a full $\mu$-measure Borel subset $X_h$. This shows that $\phi_h$ is almost surely injective.  

Conversely, if $\phi_h$ is injective on a Borel set $X_h \subset X$ of full $\mu$-measure, then a system of measures $\{\tilde\mu_{h,y}\}_{y \in \R^k}$ given by
\[
\tilde\mu_{h,y} = 
\begin{cases}
\delta_{(\phi_h|_{X_h})^{-1}(y)} &\text{for } y \in \phi_h(X_h)\\
0 &\text{otherwise}
\end{cases}
\]
satisfies the conditions \eqref{eq:cond meas decomp}--\eqref{eq:cond meas inverse}, so it is a system of conditional measures of $\mu$ with respect to $\phi_h$. Hence, by the uniqueness of the system of conditional measures, $\mu_{h,y} = \tilde\mu_{h,y}$ is a Dirac's measure for $\phi_h\mu$-almost every $y \in \R^k$. The details of this argument together with a precise discussion on measurability issues are presented in \cite[Proof of Theorem~3.1]{BGS22}.

Assertion~(c) is proved as \cite[Lemma~3.2]{BGS22}. An immediate consequence of (c) is that for $\phi_h \mu$-almost every $y \in \R^k$, the point $y$ is $k$-predictable (i.e. $\sigma_h(y) = 0$) if and only if the measure $\phi_h \circ T (\mu_{h,y})$ is a Dirac's measure. As pointed out in Remark~\ref{rem:two kinds of predict}, this condition is also equivalent to the almost sure $k$-predictability of $h$, which proves assertion~(b).
\end{proof}

Now, supposing Theorems~\ref{thm:injectivity sections}--\ref{thm:predictability sections} are true, we show how they imply Theorems~\ref{thm:excluding embed and predict}, \ref{thm:excluding embed and predict determ} and~\ref{thm:ssoy2.i hdim}. We start by proving Theorem~\ref{thm:ssoy2.i hdim}.

\begin{proof}[{Proof of Theorem~\rm\ref{thm:ssoy2.i hdim}}]

By assumption, $\hdim T^k(\mu|_{X \setminus \Prep_k(T)}) > k > 0$, which implies $\mu(X \setminus \Prep_k(T)) > 0$. Let 
\[
\tilde\mu = \frac{1}{\mu(X \setminus \Prep_k(T))}\mu|_{X \setminus \Prep_k(T)}.
\]
Then $\tilde\mu$ is a Borel probability measure on $X$ such that $\tilde\mu(\Prep_k(T)) = 0$ and $\hdim T^k\tilde \mu > k$. Hence, by Theorem~\ref{thm:predictability sections}\ref{it:predictability sections uhdim} applied for $\tilde\mu$, for a prevalent Lipschitz observable $h\colon X \to \R$ we have $\hdim \phi_h \circ T(\mu_{h, \phi_h(x)}) > 0$ for every $x$ from a $\tilde\mu$-positive measure set, hence also for every $x$ from a $\mu$-positive measure set. Consequently, $\phi_h \circ T(\mu_{h, \phi_h(x)})$ is not a Dirac's measure (and consequently, a random variable with probability distribution $\phi_h \circ T(\mu_{h, \phi_h(x)})$ has positive standard deviation) for every $x$ from a $\mu$-positive measure set. By Lemma~\ref{lem: cond meas iff}(c), this implies that for a prevalent $h$ there exists $\delta_0 > 0$ such that 
\[
\lim_{\eps \to 0} \sigma_{h,\eps}(\phi_h(x)) = \sigma_h(\phi_h(x)) > \delta_0
\]
for every $x \in Y_h$, where $Y_h \subset X$ is a set of positive $\mu$-measure. Therefore, for every $0 < \delta <\delta_0$,
\[ 
\mu(\{x \in Y_h: \sigma_{h,\eps}(\phi_h(x)) \le \delta\}) \to 0 \quad \text{as } \eps \to 0, 
\]
so
\[ 
\mu(\{x \in Y_h: \sigma_{h,\eps}(\phi_h(x)) > \delta\}) \to \mu(Y_h) > 0 \quad \text{as } \eps \to 0.
\]
This shows the main assertion of Theorem~\ref{thm:ssoy2.i hdim}. 

To prove the additional ones note first that if $T$ is bi-Lipschitz onto its image, then by the definition of the Hausdorff dimension of a measure, $\hdim T^k(\mu|_{X \setminus \Prep_k(T)})= \hdim \mu|_{X \setminus \Prep_k(T)}$. Moreover, $T$ is injective in this case, so every pre-periodic point is actually periodic and $\Prep_k(T) =  \bigcup_{p=1}^k \Per_p(T)$, so $\hdim \mu|_{X \setminus \Prep_k(T)} = \hdim \mu|_{X \setminus \bigcup_{p=1}^k \Per_p(T)}$. Consequently, it suffices to assume $k < \hdim \mu|_{X \setminus \bigcup_{p=1}^k \Per_p(T)}$ instead of $k < \hdim T^k(\mu|_{X \setminus \Prep_k(T)})$

Suppose now $\mu$ is $T$-invariant. Then 
\[
\mu \Big(T^{-k}\Big(\bigcup_{p=1}^k \Per_p(T)\Big) \setminus \bigcup_{p=1}^k \Per_p(T)\Big)=0.
\]
Hence, using the fact $\bigcup_{p=1}^k \Per_p(T) \subset \Prep_k(T) \subset T^{-k}(\bigcup_{p=1}^k\Per_p(T))$, we obtain 
\[
\mu\Big(T^{-k}\Big(\bigcup_{p=1}^k\Per_p(T)\Big) \setminus \Prep_k(T)\Big) = 0
\]
and, consequently,
\begin{multline*}
T^k(\mu|_{X \setminus \Prep_k(T)})(E) = \mu (T^{-k}(E) \setminus\Prep_k(T))\\
= \mu \Big(T^{-k}(E) \setminus T^{-k}\Big(\bigcup_{p=1}^k \Per_p(T)\Big)\Big) +  \mu \Big(T^{-k}(E) \cap T^{-k}\Big(\bigcup_{p=1}^k \Per_p(T)\Big) \setminus \Prep_k(T) \Big)\\
=\mu \Big(T^{-k}(E) \setminus T^{-k}\Big(\bigcup_{p=1}^k \Per_p(T)\Big)\Big) = 
\mu \Big(E \setminus \bigcup_{p=1}^k \Per_p(T)\Big) = \mu|_{X \setminus \bigcup_{p=1}^k \Per_p(T)}(E)
\end{multline*}
for Borel sets $E$. This implies that if $\mu$ is $T$-invariant, then $T^k(\mu|_{X \setminus \Prep_k(T)}) = \mu|_{X \setminus \bigcup_{p=1}^k \Per_p(T)}$, which gives the required assertion.

Suppose now $\mu$ is $T$-invariant and ergodic. If $\mu(\bigcup_{p=1}^k \Per_p(T)) = 0$, then $T^k(\mu|_{X \setminus \Prep_k(T)}) = \mu|_{X \setminus \bigcup_{p=1}^k \Per_p(T)} = \mu$, which ends the proof. Otherwise, $\mu$ is supported on the periodic orbit of $x$ (see e.g. \cite[Remark 4.4]{BGS20}. The proof therein is for injective $T$, but it can be modified in a straightforward manner to the general case). Then $\hdim \mu = 0 = \hdim T^k(\mu|_{X \setminus \Prep_k(T)})$, hence the proof is finished.
\end{proof}

\begin{proof}[{Proof of Theorem~\rm\ref{thm:excluding embed and predict}}] 
Suppose $k < \hdim T^{k-1}\mu$. Then by Theorem~\ref{thm:injectivity sections}\ref{it:injectivity sections uhdim}, we have $$\hdim T^{k-1} (\mu_{h, \phi_h(x)}) > 0$$ for $x$ from a positive $\mu$-measure set, so $T^{k-1} (\mu_{h, \phi_h(x)})$ (and hence $\mu_{h, \phi_h(x)}$) is not a Dirac measure for $x$ from a positive $\mu$-measure set. Consequently, $\mu_{h, y}$ is not a Dirac measure for $y$ from a positive $\phi_h\mu$-measure set. By Lemma~\ref{lem: cond meas iff}(a), this implies the first part of assertion~\ref{it:prob non injective}. If, additionally, $\mu$ is $T$-invariant or $T$ is bi-Lipschitz onto its image, then $\hdim T^k\mu= \hdim \mu$, which shows the second part of assertion~\ref{it:prob non injective}.

To show assertion~\ref{it:prob non predict}, note that if $h$ is almost surely $k$-predictable, then $\sigma_{h,\eps}(\phi_h(x)) \to 0$ as $\eps \to 0$ for $\mu$-almost every $x \in X$ (see Remark~\ref{rem:two kinds of predict}), so $\mu(\{x \in X: \sigma_{h,\eps}(\phi_h(x)) > \delta\}) \to 0$ as $\eps \to 0$ for every $\delta > 0$. Therefore, assertion~\ref{it:prob non predict} follows directly from Theorem~\ref{thm:ssoy2.i hdim}.
\end{proof}

\begin{proof}[{Proof of Theorem~\rm\ref{thm:excluding embed and predict determ}}] In view of Theorem~\ref{thm:excluding embed and predict}\ref{it:prob non injective}, to show assertion~\ref{it:determ non injective} it is sufficient to construct a Borel probability measure $\mu$ on $X$ with $\hdim T^{k-1}\mu > k$. To do it, note that since $\hdim T^{k-1}(X) > k$, Frostman's lemma (see e.g.~\cite[Theorem~8.8]{mattila}) implies that there exists a Borel probability measure $\nu$ on $T^{k-1}(X)$ with $\hdim \nu > k$. As $T$ is continuous, the Kuratowski--Ryll--Nardzewski selection theorem \cite[Theorem~12.13]{K95} ensures that there exists a Borel partial inverse to $T^{k-1}$, i.e.~a Borel map $F \colon T^{k-1}(X) \to X$ satisfying $T^{k-1}(F y) = y$ for $y \in T^{k-1}(X)$ (to apply the theorem, we use the fact that a continuous image of an open subset of $X$ is Borel, as a countable union of compact sets). Set $\mu = F \nu$. Then $\mu$ is a Borel probability measure on $X$ such that $T^{k-1} \mu = \nu$. Hence, $\hdim T^{k-1} \mu = \hdim \nu > k$. This shows assertion~\ref{it:determ non injective}.

For assertion~\ref{it:determ non predict}, using Theorem~\ref{thm:excluding embed and predict}\ref{it:prob non predict}, it is sufficient to construct a Borel probability measure $\mu$ on $X \setminus \Prep_k(T)$ with $\hdim T^k\mu > k$. Similarly as previously, Frostman's lemma implies the existence of a Borel probability measure $\nu$ on $T^k(X) \setminus \bigcup_{p=1}^k\Per_p(T)$ with $\hdim \nu > k$. Taking $F\colon T^k(X) \setminus \bigcup_{p=1}^k\Per_p(T) \to X$ to be a Borel partial inverse to $T^k$ and setting $\mu = F \nu$, we obtain $\hdim T^k\mu > k$. To end the proof, it is enough to notice that $\bigcup_{p=1}^k\Per_p(T) = T^k(\Prep_k(T))$, so $F\big(T^k(X) \setminus \bigcup_{p=1}^k\Per_p(T)\big) \subset X \setminus \Prep_k(T)$, and hence $\mu$ is supported on $X \setminus \Prep_k(T)$.
\end{proof}

\section{Technical tools}\label{sec:technic}

\subsection{Energy integral estimates}\label{subsec:energy}
The following lemma can be proved by the same arguments as \cite[Lemma 2.6]{SauerYorke97}. For the reader's convenience, we include a complete proof.
	
	\begin{lem}\label{lem:energy_int_ineq}
		Let $A$ be the matrix of a linear transformation $\psi\colon \R^m \to \R^k$, $m, k \in \N$, $m\ge k$ and let $p \in \{1, \ldots , k\}$ be such that the $p$th largest singular value $\sigma_p(A)$ is positive. Then for every $b \in \R^k$ and $0<s<p$,
		\begin{equation}\label{eq:int singular ineq} \int_{B_m(0,1)} \frac{d\alpha}{\|A\alpha + b\|^s} \leq \frac{C}{(\sigma_p(A))^s},
		\end{equation}
		where $C > 0$ depends only on $m, k, p$ and $s$.
	\end{lem}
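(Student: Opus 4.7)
The plan is to reduce the integral to a one-dimensional integral via the distribution function (layer cake) representation, and then split the resulting $\eps$-range at the threshold $\eps = \sigma_p(A)$, applying Lemma \ref{lem: key_ineq_inter} below the threshold and a trivial volume bound above it.

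First, writing the integrand $\alpha \mapsto \|A\alpha + b\|^{-s}$ as a non-negative measurable function on $B_m(0,1)$ and applying the layer cake formula gives
\[
\int_{B_m(0,1)} \frac{d\alpha}{\|A\alpha + b\|^s} = \int_0^\infty \Leb\bigl(\{\alpha \in B_m(0,1) : \|A\alpha + b\|^{-s} > t\}\bigr)\, dt.
\]
The substitution $\eps = t^{-1/s}$ converts this into
\[
s \int_0^\infty \Leb\bigl(\{\alpha \in B_m(0,1) : \|A\alpha + b\| < \eps\}\bigr)\, \eps^{-s-1}\, d\eps.
\]

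Next I would split the $\eps$-integral at $\eps_0 = \sigma_p(A)$. For $0 < \eps \le \sigma_p(A)$, Lemma \ref{lem: key_ineq_inter} applied with $\rho = 1$ yields
\[
\Leb\bigl(\{\alpha \in B_m(0,1) : \|A\alpha + b\| < \eps\}\bigr) \le C_1 \Leb(B_m(0,1)) \left(\frac{\eps}{\sigma_p(A)}\right)^p,
\]
so the contribution to the integral is bounded by
\[
C_1\, s\, \Leb(B_m(0,1))\, \sigma_p(A)^{-p} \int_0^{\sigma_p(A)} \eps^{p-s-1}\, d\eps = \frac{C_1\, s\, \Leb(B_m(0,1))}{p - s}\, \sigma_p(A)^{-s},
\]
where convergence at $\eps = 0$ uses precisely the hypothesis $s < p$. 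For $\eps > \sigma_p(A)$, I use the trivial estimate $\Leb(\{\cdots\}) \le \Leb(B_m(0,1))$, giving contribution
\[
\Leb(B_m(0,1)) \int_{\sigma_p(A)}^\infty s\eps^{-s-1}\, d\eps = \Leb(B_m(0,1))\, \sigma_p(A)^{-s},
\]
which converges because $s > 0$. Summing the two pieces yields the desired bound with $C = \Leb(B_m(0,1))\bigl(\tfrac{C_1\, s}{p - s} + 1\bigr)$, depending only on $m$, $k$, $p$, and $s$.

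I do not expect any real obstacle here; the only subtle point is the choice of the split threshold, which must match the scale appearing in Lemma \ref{lem: key_ineq_inter} so that both the sub- and super-threshold pieces produce the same power $\sigma_p(A)^{-s}$. The condition $s < p$ is essential precisely to guarantee integrability of $\eps^{p-s-1}$ near zero, which is why the hypothesis appears in the statement.
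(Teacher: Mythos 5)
Your proof is correct, but it takes a genuinely different route from the paper. The paper argues directly: for $b=0$ it uses the singular value decomposition to reduce to the explicit integral $\int_{B_m(0,1)}\bigl(\sum_{j=1}^p \alpha_j^2\bigr)^{-s/2}d\alpha$, which is finite precisely because $s<p$, and then handles general $b$ by a case analysis ($\|b\|\geq 2\|A\|$, where $\|A\alpha+b\|\geq\|A\alpha\|$, versus $\|b\|<2\|A\|$, where the component of $b$ in $\mathrm{Im}\,A$ is written as $A\alpha_0$ with $\alpha_0\in B_m(0,2)$ and a change of variables over $B_m(0,3)$ finishes the estimate). You instead use the layer cake representation together with the sublevel-set bound of Lemma~\ref{lem: key_ineq_inter} (applied with $\rho=1$), splitting the $\eps$-integral at $\sigma_p(A)$; the hypothesis $s<p$ enters as integrability of $\eps^{p-s-1}$ near $0$ rather than as finiteness of the explicit constant $c$. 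Your route is shorter and uniform in $b$ (no case split), at the cost of outsourcing the transversality content to Lemma~\ref{lem: key_ineq_inter}, whereas the paper's computation is self-contained modulo the SVD and follows the original argument of Sauer--Yorke. One minor point to keep in mind: the integrand is $+\infty$ on the affine set $\{A\alpha=-b\}$, but since $\sigma_p(A)>0$ forces $\mathrm{rank}\,A\geq p\geq 1$, that set is Lebesgue-null, so the layer cake formula applies without issue; your constant $C=\Leb(B_m(0,1))\bigl(\tfrac{C_1 s}{p-s}+1\bigr)$ indeed depends only on $m,k,p,s$.
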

	
	\begin{proof}
		First, we prove \eqref{eq:int singular ineq} for $b = 0$. Let $A = U\Sigma V^T$ be the singular value decomposition of $A$ (see Subsection~\ref{subsec:sing_values}). As $U$ and $V$ are orthogonal, we have, for $\alpha = (\alpha_1, \ldots, \alpha_m)$,
		\begin{equation}\label{eq:b=0}
			\begin{split} \int_{B_m(0,1)} \frac{d\alpha}{\|A\alpha\|^s} & = \int_{B_m(0,1)} \frac{d\alpha}{\|\Sigma\alpha\|^s} = \int_{B_m(0,1)} \bigg(\sum_{j=1}^k (\sigma_j(A))^2 \alpha_j^2\bigg)^{-s/2} d\alpha_1 \cdots d \alpha_m \\
				& \leq \int_{B_m(0,1)} \bigg(\sum_{j=1}^p (\sigma_p(A))^2 \alpha_j^2\bigg)^{-s/2} d\alpha_1 \cdots d \alpha_m = \frac{c}{(\sigma_p(A))^s}
			\end{split}
		\end{equation}
		for
		\[      
		c = \int_{B_m(0,1)} \bigg(\sum_{j=1}^p \alpha_j^2\bigg)^{-s/2} d\alpha_1 \cdots d \alpha_m 
		\]
This yields \eqref{eq:int singular ineq} with $b = 0$. To prove \eqref{eq:int singular ineq} for an arbitrary $b \in \R^k$, consider first the case when $\|b\| \geq 2\|A\|$, where $\|A\|$ denotes the matrix norm. Then $\|A\alpha + b\| \geq \|A\alpha\|$ for $\alpha \in B_m(0,1)$, so by \eqref{eq:b=0},
		\[ 
		\int_{B_m(0,1)} \frac{d\alpha}{\|A\alpha + b\|^s}  \leq \int_{B_m(0,1)} \frac{d\alpha}{\|A\alpha\|^s} \le \frac{c}{(\sigma_p(A))^s},
		\]
		providing \eqref{eq:int singular ineq} in this case. For the remaining case, let $ \|b\| < 2 \|A\|$, write $b = b_1 + b_2$, where $b_1 \in \Im A$ and $b_2 \in (\Im A)^\perp$. Then $\|b_1\| \le \|b\| < 2 \|A\|$, so there exists $\alpha_0 \in B_m(0,2)$ such that $b_1 = A \alpha_0$. Therefore,
		\begin{equation*}
			\begin{split}
				\int_{B_m(0,1)} \frac{d\alpha}{\|A\alpha + b\|^s}  & = \int_{B_m(0,1)}  \frac{d\alpha}{(\|A\alpha + b_1\|^2 + \|b_2\|^2)^{s/2}} \leq \int_{B_m(0,1)}  \frac{d\alpha}{\|A\alpha + b_1\|^s}\\
				& = \int_{B_m(0,1)} \frac{d\alpha}{\|A(\alpha + \alpha_0)\|^s}  \leq \int_{B_m(0,3)} \frac{d\alpha}{\|A \alpha \|^s}\\
				& = 3^{m-s} \int_{B_m(0,1)} \frac{d\alpha}{\|A\alpha\|^s} \le \frac{c 3^{m-s}}{(\sigma_p(A))^s},
			\end{split}
		\end{equation*}
		where the two last steps follow, respectively, by the change of variables $\alpha \mapsto \alpha/3$ and using \eqref{eq:b=0}. This finishes the proof.
	\end{proof}

\subsection{Observation matrices}\label{subsec:matrices}

Let $X \subset \R^N$, $N \in \N$, be a compact set and let $T\colon X \to X$ be a Lipschitz transformation. Fix $k \in \N$, $d \ge 2k - 1$, and let $\{h_1, \ldots, h_m\}$ be the set of all real monomials of $N$ variables of degree at most $d$. Note that $m \ge k$. For a Lipschitz observable $h \colon X \to \R$ and $\alpha = (\alpha_1, \ldots, \alpha_m) \in \R^m$ let $h_\alpha \colon X \to \R$ be given by 
\[
h_\alpha = h + \sum_{j=1}^m \alpha_j h_j.
\]
For simplicity, we write $\phi_\alpha$ instead of $\phi_{h_\alpha}$ for the $k$-delay coordinate map corresponding to $h_\alpha$, i.e.
\[
\phi_{\alpha} \colon X \to \R^k, \qquad \phi_\alpha(x) = (h_\alpha(x), h_\alpha(Tx), \ldots, h_\alpha(T^{k-1}x)). \]
Note that for $x,y \in X$ we have
\begin{equation}\label{eq:matrix_form} \phi_\alpha(x) - \phi_\alpha(y) = D_{x,y}\alpha + w_{x,y}
\end{equation}
for a $k\times m$ matrix $D_{x,y}$ defined by
\begin{equation}\label{eq:D_xy}
	D_{x,y} = \begin{bmatrix} h_1(x) - h_1(y) & \ldots & h_m(x) - h_m(y) \\
		h_1(Tx) - h_1(Ty) & \ldots & h_m(Tx) - h_m(Ty) \\
		\vdots & \ddots & \vdots \\
		h_1(T^{k-1}x) - h_1(T^{k-1}y) & \ldots & h_m(T^{k-1}x) - h_m(T^{k-1}y) \\
	\end{bmatrix} 
\end{equation}
and
\[
w_{x,y} = \begin{bmatrix} h(x) - h(y) \\
	h(Tx) - h(Ty)\\
	\vdots \\
	h(T^{k-1} x) - h(T^{k-1}y) \end{bmatrix}.
\]
The above notation will be used throughout the paper.  

\begin{rem}\label{rem:cond_for_preval}
As explained in Subsection~\ref{subsec:preval}, a sufficient condition for a set $\Sk \subset \Lip(X)$ to be prevalent (with the probe set $\{h_1, \ldots, h_m\}$ defined as the family of all real monomials of $N$ variables of degree at most $d$) is that for every Lipschitz observable $h \colon X \to \R$, we have $h_\alpha \in \Sk$ for Lebesgue-almost every $\alpha \in \R^m$. Within the subsequent part of the paper, we check prevalence using this condition. For all the results, it is enough to take $d = 2k+1$ in the definition of $\{h_1, \ldots, h_m\}$, while most of them hold also for $d = 2k-1$, which is indicated in the formulations of the particular results.\footnote{ It is easy to see that if a set $\Sk$ is prevalent with the probe set defined as the family of all monomials of degree at most $d$, then $\Sk$ is prevalent with the probe set defined as the family of all  monomials of degree at most $\tilde d$, for any $\tilde d \ge d$.}
\end{rem}

The following fact was proved in \cite{SauerYorke97}.

\begin{lem}[{\cite[Lemma 4.1]{SauerYorke97}}]\label{lem:SY97}
Fix $d \geq 2k - 1$ and let $\{h_1, \ldots , h_m\}$ be the set of all monomials of $N$ variables of degree at most $d$. Assume $y_1,\ldots, y_{2k}\in \R^N$ satisfy
\begin{align*}
\| y_{i+k} - y_i\| &\geq \sigma \quad \text{for } i = 1, \ldots, k,\\
\|y_i - y_j  \|  &\geq \eps \quad \text{for } i,j = 1, \ldots, 2k \text{ such that }i \neq j,\: |i-j| \neq k.
\end{align*}
for some $\sigma,\eps > 0$. Then for every $z = (z_1, \ldots, z_{2k}) \in \R^{2k}$ there exists $\alpha = (\alpha_1, \ldots, \alpha_m) \in \R^m$, such that 
$$\sum_{j=1}^m \alpha_j h_j(y_i) = z_i \quad \text{for } i = 1, \ldots, 2k$$
and
$$\|\alpha\|_{\infty} \leq \frac{2k (\max \{1,\|y_1\|,\ldots, \|y_{2k}\|\})^{2m-1}\|z\|_{\infty}}{\eps^{2k-2}\sigma}.$$
   
\end{lem}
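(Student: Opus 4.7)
The plan is to build $\alpha$ by explicit multivariate Lagrange interpolation. For each index $i\in\{1,\ldots,2k\}$, I would introduce the polynomial
$$P_i(y)\;=\;\prod_{j\neq i}\frac{\langle y-y_j,\,y_i-y_j\rangle}{\|y_i-y_j\|^{2}},$$
a product of $2k-1$ affine functions of $y\in\R^N$, each vanishing at $y_j$ and equal to $1$ at $y_i$. Thus $P_i(y_\ell)=\delta_{i\ell}$ and $\deg P_i\le 2k-1$, so the polynomial $P(y)=\sum_{i=1}^{2k}z_iP_i(y)$ has total degree at most $2k-1\le d$ and satisfies $P(y_i)=z_i$ for every $i$. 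Since $h_1,\ldots,h_m$ form a basis of the space of polynomials of degree $\le d$ in $N$ variables, expanding $P=\sum_{j=1}^m\alpha_j h_j$ produces a unique $\alpha\in\R^m$ with the required property $\sum_j\alpha_jh_j(y_i)=z_i$.

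To bound $\|\alpha\|_\infty$, I would estimate the coefficients of $P$ in the monomial basis. Each factor $\ell_{ij}(y)=\langle y-y_j,y_i-y_j\rangle/\|y_i-y_j\|^{2}$, viewed as an affine function of $(y^{(1)},\ldots,y^{(N)})$, has all its coefficients (constant and linear) bounded in absolute value by $M/\|y_i-y_j\|$, where $M=\max\{1,\|y_1\|,\ldots,\|y_{2k}\|\}$. Expanding $P_i$ as a product of $2k-1$ such affine factors and collecting terms, each monomial coefficient of $P_i$ is then bounded by a combinatorial factor depending only on $N$ and $k$ times $M^{2k-1}/\prod_{j\neq i}\|y_i-y_j\|$. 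By the separation hypotheses, for each $i$ exactly one factor $\|y_i-y_{i\pm k}\|$ in the denominator is diagonal and hence at least $\sigma$, while the remaining $2k-2$ are non-diagonal and each at least $\eps$, giving $\prod_{j\neq i}\|y_i-y_j\|\ge\eps^{2k-2}\sigma$. Summing over $i$ with $|z_i|\le\|z\|_\infty$ then yields
$$\|\alpha\|_\infty\;\le\;\frac{2k\,C(N,k)\,M^{2k-1}\,\|z\|_\infty}{\eps^{2k-2}\sigma}$$
for a combinatorial constant $C(N,k)$.

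To reach the clean form $2k\,M^{2m-1}\,\|z\|_\infty/(\eps^{2k-2}\sigma)$ appearing in the statement, one absorbs $C(N,k)\,M^{2k-1}$ into $M^{2m-1}$ using that $m=\binom{N+d}{d}$ is comparatively large: in particular $2m-1\ge 2k-1$, so $M^{2k-1}\le M^{2m-1}$ thanks to $M\ge 1$, and the combinatorial constant $C(N,k)$ is then dominated by the remaining factor $M^{2m-2k}$, either trivially for sufficiently large $M$ or by a finer case analysis in the small-$M$ regime. The main obstacle will be this last bookkeeping step: matching the explicit constants from the multivariate Lagrange construction with the precise exponent $M^{2m-1}$ in the statement may require either a more refined count of the monomial contributions in the coefficient estimate above or an alternative construction (for instance, reducing to one-dimensional Lagrange interpolation via a generic linear projection $\R^N\to\R$ that inherits the separation up to a dimension-dependent constant, and then expanding the resulting univariate polynomial in the monomial basis of $\R^N$).
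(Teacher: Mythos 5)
The paper does not prove Lemma~\ref{lem:SY97} at all --- it is imported verbatim from \cite{SauerYorke97} --- so your attempt can only be judged against the statement and the way it is used later. Your construction is the natural one (and, in spirit, the one behind the cited source): the polynomials $P_i$ are well defined, each factor $\langle y-y_j,y_i-y_j\rangle/\|y_i-y_j\|^2$ kills exactly one node and equals $1$ at $y_i$, the degree is $2k-1\le d$, and the separation hypotheses give $\prod_{j\ne i}\|y_i-y_j\|\ge \eps^{2k-2}\sigma$ because for each $i$ exactly one of the $2k-1$ distances is a partner distance. Expanding $P=\sum_i z_iP_i$ in the monomial basis then yields an estimate of the form $\|\alpha\|_\infty\le 2k\,C(k,N)\,M^{2k-1}\|z\|_\infty/(\eps^{2k-2}\sigma)$, which is exactly what this paper needs: in the proofs of Propositions~\ref{prop:singular value on orbit} and~\ref{prop:singular value on orbit kernel} the conclusion of the lemma is immediately coarsened to $\|\alpha\|\le \|z\|/(c\,\eps^{2k-2}\sigma)$ with $c$ depending only on $k$, $m$ and $\diam X+\dist(0,X)$, so a constant $C(k,N)$ in place of the exact factor $M^{2m-1}$ is harmless there.

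The genuine gap is the step you yourself flag and leave open: absorbing $C(k,N)$ into $M^{2m-1}$. That route cannot work, since $M\ge 1$ only gives $M^{2k-1}\le M^{2m-1}$, and when $M$ is close to $1$ the spare factor $M^{2m-2k}$ is close to $1$ and cannot dominate a combinatorial constant strictly larger than $1$; no ``small-$M$ case analysis'' fixes this, because the cross terms in the expansion of a product of $2k-1$ affine factors really do accumulate. Worse, the literal constant in the quoted statement does not seem attainable at the boundary of its hypotheses: take $N=1$, $k=2$, $d=2k-1=3$ (so $m=4=2k$ and $\alpha$ is \emph{uniquely} determined by Vandermonde interpolation), nodes $y_1=1$, $y_2=1-\eps$, $y_3=1-2\eps$, $y_4=1-3\eps$ (so $\sigma=2\eps$, $M=1$) and $z=(1,-1,1,-1)$; the coefficient of $y$ in the unique cubic interpolant equals $\sum_i z_i e_2^{(i)}/\prod_{j\ne i}(y_i-y_j)$, which is $4\|z\|_\infty\eps^{-3}(1+o(1))$, exceeding the claimed bound $2k M^{2m-1}\|z\|_\infty/(\eps^{2k-2}\sigma)=2\|z\|_\infty\eps^{-3}$. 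So matching the exact constant is not a bookkeeping issue to be finessed; the honest conclusion of your argument is the lemma with $2kM^{2m-1}$ replaced by a constant depending only on $k$ and $N$ (times $M^{2k-1}$), which is both provable by your method and fully sufficient for every application of Lemma~\ref{lem:SY97} in this paper. If you want the statement literally as quoted, you would have to exploit the freedom coming from $d\ge 2k+1$ (a non-square system, where a smaller solution than the Lagrange one may exist) or consult the original proof in \cite{SauerYorke97}; as written, your proposal does not deliver it.
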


Using this lemma, we prove the following two estimates of the singular values of $D_{x,y}$.

\begin{prop}\label{prop:singular value on orbit} Fix $d \geq 2k - 1$ and let $\{h_1, \ldots , h_m\}$ be the set of all monomials of $N$ variables of degree at most $d$. For $x , y\in X$ assume
\[
\|T^i \xi_1 - T^j \xi_2 \| \ge \eps \quad \text{for } i,j = 0, \ldots, k-1 \text{ such that }i \neq j,\: \xi_1,\xi_2 \in \{x,y\}
\]
for some $\eps \ge 0$. Then
	\[ \sigma_k(D_{x,y}) \geq C\eps^{2k - 2}\|T^{k-1} x - T^{k-1} y \|, \]
where $C>0$ depends only on $k,m, X$ and $\Lip(T)$.
\end{prop}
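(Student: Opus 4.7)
The plan is to deduce the singular value bound from Lemma~\ref{lem:SY97} via the standard characterization
$$\sigma_k(A) = \min_{\|v\|=1,\, v\in\R^k}\|A^T v\|,$$
which for a surjective $k\times m$ matrix $A$ (with $k\le m$) is equivalent to $\sigma_k(A) = 1/\|A^{+}\|$, where $A^{+}$ denotes the Moore--Penrose pseudo-inverse. Since $\|A^{+}w\|$ equals the infimum of $\|\alpha\|$ over all solutions of $A\alpha=w$, a lower bound on $\sigma_k(D_{x,y})$ will follow once, for every $w\in\R^k$, I exhibit some $\alpha\in\R^m$ with $D_{x,y}\alpha = -w$ and $\|\alpha\|$ appropriately controlled by $\|w\|$.

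The case $T^{k-1}x = T^{k-1}y$ (or $\eps = 0$) being trivial, assume otherwise and set $L = \Lip(T)$. The Lipschitz inequality gives $\|T^i x - T^i y\|\geq L^{-(k-1-i)}\|T^{k-1}x - T^{k-1}y\|\geq \sigma$ for every $i=0,\ldots,k-1$, where $\sigma := L^{-(k-1)}\|T^{k-1}x - T^{k-1}y\|$. Together with the hypothesis of the proposition, this verifies both separation conditions of Lemma~\ref{lem:SY97} applied to the $2k$ points $y_i = T^{i-1}x$ and $y_{k+i}=T^{i-1}y$ for $i=1,\ldots,k$, with parameters $\sigma$ and $\eps$ and with the constant $M = \max\{1,\sup_{z\in X}\|z\|\}$ bounded in terms of $X$ alone.

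Given $w=(w_1,\ldots,w_k)\in\R^k$, I will feed the asymmetric vector $z=(0,\ldots,0,w_1,\ldots,w_k)\in\R^{2k}$ into the lemma. It returns $\alpha\in\R^m$ with $\sum_j\alpha_j h_j(T^{i-1}x)=0$ and $\sum_j\alpha_j h_j(T^{i-1}y)=w_i$ for $i=1,\ldots,k$, satisfying $\|\alpha\|_\infty \leq C_1\|w\|_\infty/(\eps^{2k-2}\sigma)$ for a constant $C_1$ depending only on $k,m,X$. Subtracting the first $k$ equations from the last $k$ and recalling the definition \eqref{eq:D_xy} of $D_{x,y}$ yields $D_{x,y}(-\alpha) = w$. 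Combining $\|\alpha\|\leq\sqrt{m}\,\|\alpha\|_\infty$ with the first paragraph produces $\|(D_{x,y})^{+}\|\leq C_2/(\eps^{2k-2}\sigma)$, and substituting the definition of $\sigma$ gives the claimed estimate $\sigma_k(D_{x,y})\geq C\eps^{2k-2}\|T^{k-1}x - T^{k-1}y\|$ with a constant $C$ depending only on $k,m,X,\Lip(T)$.

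The only genuinely non-routine step is the choice of the vector $z$: setting the first $k$ entries to zero is what makes the subtraction of the two blocks of equations reproduce $D_{x,y}\alpha$ (up to sign) rather than fall into its kernel, which a more symmetric choice would give. After this observation the argument becomes a straightforward unpacking of Lemma~\ref{lem:SY97} together with the singular value / pseudo-inverse duality.
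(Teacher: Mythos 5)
Your proposal is correct and takes essentially the same route as the paper: both feed Lemma~\ref{lem:SY97} a target vector that vanishes on one of the two orbit blocks (you on the $x$-block, the paper on the $y$-block, a harmless sign difference) to produce, for every $w \in \R^k$, a preimage of $w$ under $D_{x,y}$ of norm at most a constant times $\|w\|/(\eps^{2k-2}\sigma)$, and then convert this bounded solvability into the lower bound on $\sigma_k(D_{x,y})$ — the paper via the singular value decomposition, you via the equivalent pseudo-inverse formulation. The one small slip is that your inequality $L^{-(k-1-i)} \ge L^{-(k-1)}$ (needed to get $\|T^i x - T^i y\| \ge \sigma$) requires $L \ge 1$, so you should set $L = \max\{\Lip(T),1\}$ as the paper does; with that adjustment the argument is complete.
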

\begin{proof}
	Obviously, we can assume $\eps > 0$ and $\|T^{k-1} x - T^{k-1} y \| > 0$.
	Then
	\[   
	\sigma = \min \{ \| T^i x - T^i y \| : 0 \leq i \leq k-1\} > 0.
	\]
	Applying Lemma~\ref{lem:SY97} for $y_i = T^{i-1} x$, $y_{k + i} = T^{i-1} y$, $i = 1, 2, \ldots, k$ and  $z_1, \ldots, z_{k} \in \R$, $z_{k+1} = \cdots = z_{2k} = 0$, we find $\alpha  = (\alpha_1, \ldots, \alpha_m)\in \R^m$ such that
	\[ \sum_{j=1}^m \alpha_j h_j(y_i) = z_i \quad \text{for } i = 1, \ldots, 2k, \qquad \|\alpha\|_{\infty} \leq \frac{2k (\max\{1, \diam X + \dist(0,X)\})^{2m-1}\|\tilde z \|_{\infty}}{\eps^{2k-2}\sigma}, \]
	where
	\[   
	 \tilde{z} = (z_1, \ldots, z_k), \qquad
	 \]
	Hence, by \eqref{eq:D_xy}, 
	\[
	D_{x,y} \alpha = \Big(\sum_{j=1}^m \alpha_j (h_j(T^{i-1} x) - h_j(T^{i-1} y))\Big)_{i= 1}^k =(z_1-z_{k+1},\ldots, z_k-z_{2k}) = \tilde z.
	\]
Moreover,
	 \begin{equation}\label{eq:alpha<}
	  \|\alpha\| \leq \frac{ \|\tilde{z}\|}{c\eps^{2k-2}\sigma}
	  \end{equation}
  	 for some $c > 0$ depending only on $k$, $m$ and $\diam X + \dist(0,X)$. Concluding, for every $\tilde z \in \R^k$ we have found $\alpha \in \R^m$ such that $D_{x,y}\alpha = \tilde z$ and \eqref{eq:alpha<} holds. This implies (it is enough to use the singular value decomposition of $D_{x,y}$, see Subsection~\ref{subsec:sing_values}) that 
  	 \[ \sigma_k(D_{x,y}) \geq c\eps^{2k - 2}\sigma.\]
	 As $T$ is Lipschitz, for $0 \leq i \leq k-1$ we have $\|T^{k-1}x - T^{k-1}y\| \le (\Lip(T))^{k - 1 - i} \|T^i x - T^i y\| \le L^{k - 1} \|T^i x - T^i y\|$, where $L = \max\{\Lip(T),1\}$. Hence, $\sigma \geq L^{1-k}\|T^{k-1} x - T^{k-1} y\|$, which implies
	  \[ \sigma_k(D_{x,y}) \geq cL^{1-k}\eps^{2k - 2}\|T^{k-1} x - T^{k-1} y\|.\]
\end{proof}

\begin{prop}\label{prop:singular value on orbit kernel} 
Fix $d \geq 2k + 1$ and let $\{h_1, \ldots , h_m\}$ be the set of all monomials of $N$ variables of degree at most $d$. For $x , y\in X$  assume
\[
\|T^i \xi_1 - T^j \xi_2 \| \ge \eps \quad \text{for } i,j = 0, \ldots, k \text{ such that }i \neq j,\: \xi_1,\xi_2 \in \{x,y\}
\]
for some $\eps \ge 0$. Then
	\[ \sigma_1(D_{Tx,Ty}|_{\Ker D_{x,y}}) \geq C\eps^{2k}\|T^{k} x - T^{k} y \|, \]
	where $C > 0$ depends only on $k,m, X$ and $\Lip(T)$, while $D_{Tx,Ty}|_{\Ker D_{x,y}}$ denotes the restriction of the linear operator with the matrix $D_{Tx, Ty}$ to the kernel of the linear operator with the matrix $D_{x,y}$.
\end{prop}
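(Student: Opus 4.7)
The essential structural observation is that the rows of $D_{Tx,Ty}$ indexed by $i = 0, \ldots, k-2$ are precisely the rows of $D_{x,y}$ indexed by $i = 1, \ldots, k-1$. Consequently, for any $\alpha \in \Ker D_{x,y}$ all coordinates of $D_{Tx,Ty}\alpha$ except the last vanish, so that
\[
D_{Tx,Ty}\alpha = (0,\ldots,0,L_\alpha), \qquad L_\alpha := \sum_{j=1}^m \alpha_j\bigl(h_j(T^k x) - h_j(T^k y)\bigr),
\]
and therefore
\[
\sigma_1\bigl(D_{Tx,Ty}|_{\Ker D_{x,y}}\bigr) = \sup\bigl\{\,|L_\alpha|/\|\alpha\| : 0 \neq \alpha \in \Ker D_{x,y}\bigr\}.
\]
Hence the task reduces to producing a single $\alpha \in \Ker D_{x,y}$ with $|L_\alpha|/\|\alpha\| \gtrsim \eps^{2k}\|T^k x - T^k y\|$. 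Clearly we may assume $\|T^k x - T^k y\| > 0$, which (since $T$ is a function) forces $T^i x \neq T^i y$ for every $i = 0, \ldots, k$, with the quantitative lower bound $\|T^i x - T^i y\| \ge L^{i-k}\|T^k x - T^k y\| \ge L^{-k}\|T^k x - T^k y\|$, where $L = \max\{1,\Lip(T)\}$.

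The plan is to apply a $(k+1)$-pair analogue of Lemma~\ref{lem:SY97} to the $2(k+1)$ points
\[
y_1, \ldots, y_{k+1} = x, Tx, \ldots, T^k x, \qquad y_{k+2}, \ldots, y_{2k+2} = y, Ty, \ldots, T^k y,
\]
with prescribed values $z_i = z_{k+1+i} = 0$ for $i = 1, \ldots, k$, $z_{k+1} = \|T^k x - T^k y\|$ and $z_{2k+2} = 0$. The first $k$ equalities $z_i - z_{k+1+i} = 0$ will force $\alpha \in \Ker D_{x,y}$, while the remaining equality yields $L_\alpha = \|T^k x - T^k y\|$. By hypothesis the cross-level separations of these $2(k+1)$ points are all at least $\eps$, and by the Lipschitz estimate above the same-level separations are all at least $\sigma := L^{-k}\|T^k x - T^k y\|$.

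The $(k+1)$-pair version of Lemma~\ref{lem:SY97} requires monomials of $N$ variables of degree at most $2(k+1) - 1 = 2k+1$, which matches exactly the new hypothesis $d \ge 2k+1$ (stronger than $d \ge 2k-1$ used in Proposition~\ref{prop:singular value on orbit}). Its proof is a direct adaptation of that of Lemma~\ref{lem:SY97} with one extra pair built into the interpolation blocks, and yields $\alpha$ with
\[
\|\alpha\|_\infty \le \frac{2(k+1)\bigl(\max\{1, \diam X + \dist(0,X)\}\bigr)^{2m-1}\|T^k x - T^k y\|}{\eps^{2k}\,\sigma}.
\]
Substituting $\sigma = L^{-k}\|T^k x - T^k y\|$ collapses the factor $\|T^k x - T^k y\|$ and produces $\|\alpha\| \le C'/\eps^{2k}$ with $C'$ depending only on $k,m,X,\Lip(T)$. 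Combining with $|L_\alpha| = \|T^k x - T^k y\|$ gives the claimed lower bound $\sigma_1(D_{Tx,Ty}|_{\Ker D_{x,y}}) \ge C \eps^{2k}\|T^k x - T^k y\|$.

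The only real obstacle is the passage from the $2k$-point to the $2(k+1)$-point version of Lemma~\ref{lem:SY97}; this is where the strengthened degree assumption $d \geq 2k+1$ is consumed. Once that interpolation statement is in hand, the remainder is a clean combination of the kernel structure of $D_{Tx,Ty}$ relative to $D_{x,y}$ and the Lipschitz lower bound for $\|T^i x - T^i y\|$ in terms of $\|T^k x - T^k y\|$.
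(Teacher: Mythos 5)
Your proposal is correct and follows essentially the paper's own proof: both arguments apply Lemma~\ref{lem:SY97} to the $2(k+1)$ points $x, Tx, \ldots, T^k x, y, Ty, \ldots, T^k y$ (this is simply the lemma with $k+1$ in place of $k$, so no separate ``analogue'' needs to be proved, the hypothesis $d \ge 2k+1 = 2(k+1)-1$ being exactly what is required) to produce $\alpha \in \Ker D_{x,y}$ with $D_{Tx,Ty}\alpha = (0,\ldots,0,w)$ and $\|\alpha\| \le C'\eps^{-2k}$, and then use the Lipschitz bound $\min_{0\le i\le k}\|T^i x - T^i y\| \ge L^{-k}\|T^k x - T^k y\|$. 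The only cosmetic differences are your explicit row-shift observation identifying $\sigma_1(D_{Tx,Ty}|_{\Ker D_{x,y}})$ with $\sup|L_\alpha|/\|\alpha\|$ and the normalization $w = \|T^k x - T^k y\|$ in place of the paper's $w = c\eps^{2k}\sigma$.
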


\begin{proof}
	The proof is analogous to the one of Proposition~\ref{prop:singular value on orbit}. We can assume $\eps > 0$ and $\|T^{k} x - T^{k} y \| > 0$, which gives
	\[   
	\sigma = \min \{ \| T^i x - T^i y \| : 0 \leq i \leq k\} > 0.
	\]
By Lemma~\ref{lem:SY97} applied for $y_i = T^{i-1} x$, $y_{k + 1 + i} = T^{i-1} y$, $i = 1 \ldots, k + 1$ and 
	\[ z_1 = z_2 = \cdots = z_k =0, \qquad z_{k+1} = w, \qquad z_{k+2} = \cdots = z_{2k+2} = 0,\]
	where $w \in \R$, we find $\alpha = (\alpha_1, \ldots, \alpha_m) \in \R^m$ such that
	\[ \sum_{j=1}^m \alpha_j h_j(y_i) = z_i \quad \text{for } i = 1, \ldots,  2k+2,\qquad  \|\alpha\| \leq \frac{\|\tilde z \|}{c\eps^{2k}\sigma},\]
	where
	\[   
	\tilde{z}=(0,\ldots, 0, w)\in \R^k
	\]
and $c > 0$ depends only on $k$, $m$ and $\diam X + \dist(0,X)$. By \eqref{eq:D_xy}, 
	\begin{align*}   
	D_{x,y} \alpha &= (z_1-z_{k+2},\ldots, z_k-z_{2k+1}) = 0,\\
	D_{Tx,Ty} \alpha &= (z_2-z_{k+3},\ldots, z_{k+1}-z_{2k+2}) = \tilde z.
	\end{align*}
Choosing $w = c\eps^{2k}\sigma$, we have $\alpha \in \Ker D_{x,y} \cap \overline{B_m}(0,1)$ with $\|D_{Tx, Ty}\alpha\|= c\eps^{2k}\sigma$, which implies
\[
\sigma_1(D_{Tx,Ty}|_{\Ker D_{x,y}}) \geq c\eps^{2k}\sigma.
\]
As in the proof of Proposition~\ref{prop:singular value on orbit}, the Lipschitz condition for $T$ gives $\sigma \ge L^{-k}\|T^{k} x - T^{k} y \|$ for $L = \max\{\Lip(T), 1\}$, so
\[
\sigma_1(D_{Tx,Ty}|_{\Ker D_{x,y}}) \geq cL^{-k}\eps^{2k}\|T^{k} x - T^{k} y\|.
\]

\end{proof}

\subsection{Phase space decomposition}\label{subsec:decomp}

The proofs of Theorems~\ref{thm:injectivity sections}--\ref{thm:absolute continuity} require working with energy integrals, which leads to estimates of the correlation dimensions of the considered measures. In order to obtain the results for the Hausdorff dimension, we have to restrict measure $\mu$ to suitable sets, where its Hausdorff and correlation dimensions are arbitrary close. Moreover, for technical reasons it is necessary to consider subsets of $X$, on which the number $\eps$ from Propositions~\ref{prop:singular value on orbit}--\ref{prop:singular value on orbit kernel} is uniformly bounded away from zero. Both objectives are achieved by the following decomposition of $X$.

\begin{prop}\label{prop:decomp} Let $X \subset\R^N$, $N \in \N$, be a compact set, let $\mu$ be a Borel probability measure on $X$ and let $T\colon X \to X$ be a Lipschitz map. Fix $\ell \in \N \cup \{0\}$, $\eta > 0$ and $($in the case $\ell > 0)$ assume $\mu \left( \Prep_\ell(T) \right) = 0$. Then there exists a countable collection $\mF$ of compact subsets of $X$ such that:
	\begin{enumerate}[$($i$)$]
		\item\label{it:F sum} $\mu \left(\bigcup_{F \in \mF} F\right) = 1$ and $\mu(F)>0$ for every $F \in \mF$,
		\item\label{it:cdim>hdim} $\cdim T^{\ell}(\mu|_F) \geq \lhdim T^{\ell}\mu - \eta$,
		\item\label{it:orbit separation} if $\ell > 0$, then for every $F \in \mF$ there exists $\eps = \eps(F)> 0$ such that
		\[
\|T^i \xi_1 - T^j \xi_2 \| \ge \eps \quad \text{for } i,j = 0, \ldots, \ell \text{ such that }i \neq j,\: \xi_1,\xi_2 \in \{x,y\}
\]
for every $x, y \in F$.
	\end{enumerate}
\end{prop}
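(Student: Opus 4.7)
The plan is to build $\mF$ in two stages: a coarse decomposition that secures the energy bound~(ii) via sublevel sets of a suitable potential, followed by a geometric refinement that enforces the orbit-separation condition~(iii); throughout, inner regularity of $\mu$ turns Borel pieces into compact ones. If $\lhdim T^\ell \mu \le \eta$ then~(ii) is vacuous, since $\cdim \ge 0$ by definition, so we may assume the contrary and fix some $s \in (\lhdim T^\ell \mu - \eta,\, \lhdim T^\ell \mu)$.

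For the first stage, set $g(x) = \mE_s(T^\ell \mu,\, T^\ell x)$. By Lemma~\ref{lem:loc dim energy} together with the essential-infimum characterization of $\lhdim T^\ell \mu$, the potential $\mE_s(T^\ell \mu, \cdot)$ is finite $T^\ell \mu$-almost everywhere; since it is lower semi-continuous in its argument and $T^\ell$ is continuous, $g$ is Borel and $\mu$-almost everywhere finite. Partition $X$, up to a null set, into the Borel sets $B_n = \{x : n-1 \le g(x) < n\}$, and use inner regularity of $\mu$ to exhaust each $B_n$ by an increasing sequence of compact subsets $K_{n,i}$. For any Borel $F \subset \{g \le n\}$, Fubini and the definition of $g$ give
\[
\mE_s(T^\ell(\mu|_F)) = \int_F \int_F \frac{d\mu(x)\,d\mu(x')}{\|T^\ell x - T^\ell x'\|^{s}} \le \int_F g(x)\,d\mu(x) \le n\mu(F) < \infty,
\]
so by Lemma~\ref{lem:corr to Hausdorff}-style reasoning on energies, $\cdim T^\ell(\mu|_F) \ge s > \lhdim T^\ell \mu - \eta$. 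In particular~(ii) holds for every compact subset of any $K_{n,i}$, hence is automatically preserved by the refinement below.

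For $\ell = 0$ the collection $\{K_{n,i} : \mu(K_{n,i}) > 0\}$ already meets all requirements. For $\ell > 0$, introduce the continuous function $\delta(x) = \min_{0 \le i < j \le \ell}\|T^i x - T^j x\|$, which is strictly positive precisely off $\Prep_\ell(T)$, hence $\mu$-almost everywhere. The compact sets $A_q = \{x \in X : \delta(x) \ge 2/q\}$ increase to a set of full $\mu$-measure. Setting $L = \max\{\Lip T,\,1\}$, cover each compact set $K_{n,i} \cap A_q$ by finitely many closed balls of radius $1/(2qL^\ell)$ and let $F_{n,i,q,r}$ denote their intersections with $K_{n,i} \cap A_q$. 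For $x,y \in F_{n,i,q,r}$ and $0 \le a \ne b \le \ell$, the triangle inequality together with $\|T^b x - T^b y\| \le L^\ell \|x-y\| \le 1/q$ yields
\[
\|T^a x - T^b y\| \ge \|T^a x - T^b x\| - \|T^b x - T^b y\| \ge \tfrac{2}{q} - \tfrac{1}{q} = \tfrac{1}{q},
\]
with symmetric estimates in the remaining combinations of $\xi_1,\xi_2 \in \{x,y\}$ (the case $\xi_1 = \xi_2$ follows directly from membership in $A_q$). Thus $\eps(F_{n,i,q,r}) = 1/q$ realizes~(iii). Taking $\mF = \{F_{n,i,q,r} : \mu(F_{n,i,q,r}) > 0\}$ gives a countable family of compact sets, each of positive $\mu$-measure; since $\bigcup_{n,i} K_{n,i}$ and $\bigcup_q A_q$ both have full $\mu$-measure, their intersection does as well, establishing~(i).

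The main obstacle is calibrating the ball radii in the refinement stage: one must pick them small enough (here $1/(2qL^\ell)$) that Lipschitz expansion of $T$ over $\ell$ iterates cannot eat away the separation threshold $2/q$ inherited from $A_q$, yet still produce only countably many pieces. Beyond this, the only subtle measurability point is that $g$ is Borel, which follows from lower semi-continuity of the $s$-potential combined with continuity of $T^\ell$; the rest is bookkeeping via inner regularity and Fubini.
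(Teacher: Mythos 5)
Your proposal is correct and follows essentially the same strategy as the paper: decompose $X$ into countably many pieces on which the (image of the) measure has controlled dimension, intersect with the sets $A_q$ (the paper's $Y_q$) and finitely many small balls of radius comparable to $q^{-1}(\Lip T)^{-\ell}$ to get the orbit-separation constant $\eps(F)$, and pass to compact pieces of positive measure via inner regularity. The only difference is in how (ii) is secured: you use sublevel sets of the $s$-potential $g(x)=\mE_s(T^\ell\mu,T^\ell x)$ (finite $\mu$-a.e.\ by Lemma~\ref{lem:loc dim energy}) and bound $\mE_s(T^\ell(\mu|_F))\le\int_F g\,d\mu$, whereas the paper uses Frostman-type sets $\{x: T^\ell\mu(B(x,r))\le r^{t-\eta}\}$ and a layer-cake computation — both yield the same energy estimate, and your bound follows directly from the definition of $\cdim$ (the reference to Lemma~\ref{lem:corr to Hausdorff} is not actually needed).
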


\begin{proof}
Let $t = \lhdim T^\ell \mu$. For $n \in \N$ set
\[ X_n = \{ x \in X :  T^\ell\mu(B(x,r)) \leq r^{t - \eta} \text{ for every } 0< r< 1/n \}.\]
By the definition of $\lhdim \mu$, we have $\mu \left( \bigcup_{n = 1}^\infty T^{-\ell}(X_{n}) \right) = T^\ell\mu \left( \bigcup_{n = 1}^\infty X_{n} \right) = 1$. Furthermore, $$\cdim T^\ell(\mu|_{T^{-\ell}(X_n)}) \geq t - \eta$$ for every $n \in \N$. Indeed, for $0 < s < t - \eta$, a suitable change of coordinates (see e.g. the calculation in \cite[Chapter~8]{mattila}) and the  equality $T^\ell (\mu|_{T^{-\ell} (X_n)}) = (T^\ell \mu)|_{X_n}$ provide
	\begin{equation*}
		\begin{split}
			\mE_s(T^\ell(\mu|_{T^{-\ell}X_n})) & = \int_{X_n} \int_{X_n} \frac{d(T^\ell \mu) (x) d(T^\ell \mu) (y)}{\| x - y\|^s}  = s  \int_{X_n} \int_{0}^\infty  \frac{T^\ell \mu (X_n \cap B(x,r))}{r^{s+1}} dr d(T^\ell \mu)(x) \\
			& \leq s \int_{0}^{1/n} r^{t - \eta - s - 1} dr + \int_{1/n}^\infty  \frac{dr}{r^{s+1}} < \infty.
		\end{split}
	\end{equation*}
The remainder of the construction proceeds as in the proof of \cite[Theorem~4.2]{SauerYorke97}. We present the arguments for the reader's convenience. In the case $\ell > 0$ we have $\mu ( \Prep_\ell(T)) = 0$, so
\[ \eps(x) = \min\{ \|T^i x - T^j x \| : 0 \leq i \neq j \leq \ell \} > 0 \quad \text{ for } \mu\text{-almost every } x \in X\]
(in the case $\ell = 0$ we set $\eps(x) = 1$ for $x \in X$). Then for
\[ Y_q = \{ x \in X : \eps(x) \geq 1/q \}, \qquad q \in \N \]
we obtain $\mu ( \bigcup_{q = 1}^\infty Y_{q} ) = 1$. Note that if $y \in \overline{B_N}(x, \frac{1}{4q}(\Lip(T))^{-\ell})$ for some $x \in Y_q$, then 
\[
\|T^i \xi_1 - T^j \xi_2 \| \ge \frac{1}{2q} \quad \text{for } i,j = 0, \ldots, \ell \text{ such that }i \neq j,\: \xi_1,\xi_2 \in \{x,y\}.
\]
Let $\mB_q$ be a countable cover of $Y_{q}$ by balls centred in $Y_q$, of radii at most $\frac{1}{4q}(\Lip(T))^{-\ell}$. Then
\[ \tilde\mF = \{ \tilde F = T^{-\ell}X_{n} \cap Y_{q} \cap B : \: n, q \in \N,\ B \in \mB_q,\ \mu(\tilde F) > 0 \} \]
is a countable family satisfying the conditions \ref{it:F sum}--\ref{it:orbit separation}. By the regularity of $\mu$, each set $\tilde F \in \tilde \mF$ has a full $\mu$-measure subset, which is a countable union of compact sets. The union over $\tilde F \in \tilde \mF$ of the families of all these compact sets defines the suitable family $\mF$.

\end{proof}

\subsection{Restricted conditional measures}\label{subsec:restricted cond meas}

In the subsequent proofs, we will use the following fact.

\begin{lem}\label{lem:restricted cond meas} Let $X \subset \R^N$, $N \in \N$, be a compact set, let $\mu$ be a Borel probability measure on $X$ and let $\phi\colon X \to \R^k$, $k \in \N$, be a Borel map. Suppose $F \subset X$ is a positive $\mu$-measure set and let $\nu = \frac{1}{\mu(F)}\mu|_F$. Then for $\nu$-almost every $x \in F$, there exists $f(x) > 0$ such that
\[
\nu_{\phi, \phi(x)} = f(x) \mu_{\phi, \phi(x)}|_F,
\]
where $\{\mu_{\phi, y}\}_{y\in \R^k}$ and $\{\nu_{\phi, y}\}_{y\in \R^k}$ are, respectively, the systems of conditional measure of $\mu$ and $\nu$, with respect to $\phi$.
\end{lem}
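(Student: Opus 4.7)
The plan is to invoke the uniqueness part of the topological Rohlin disintegration theorem: I will construct an explicit candidate system of conditional measures for $\nu$ by restricting and renormalizing $\mu_{\phi, y}$, verify the two characterizing conditions \eqref{eq:cond meas decomp} and \eqref{eq:cond meas inverse}, and conclude from uniqueness.

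First I compute the push-forward. For any Borel $A \subset \R^k$, using \eqref{eq:cond meas decomp} for $\mu$,
\[
\phi\nu(A) \;=\; \frac{1}{\mu(F)} \mu(\phi^{-1}(A) \cap F) \;=\; \frac{1}{\mu(F)} \int_{A} \mu_{\phi, y}(F) \, d(\phi\mu)(y).
\]
Hence $\phi\nu \ll \phi\mu$ with Radon--Nikodym derivative $g(y) := \mu_{\phi, y}(F)/\mu(F)$. In particular the measurable function $y \mapsto \mu_{\phi, y}(F)$ (measurability comes from the disintegration theorem applied to $E = F$) is strictly positive for $\phi\nu$-a.e. $y$.

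Next I define the candidate
\[
\tilde\nu_{\phi, y} \;:=\; \begin{cases} \dfrac{1}{\mu(F) g(y)} \, \mu_{\phi, y}|_F & \text{if } g(y) > 0,\\[2pt] 0 & \text{otherwise.} \end{cases}
\]
Each $\tilde\nu_{\phi, y}$ is a Borel measure supported on $\phi^{-1}(y)$ (since $\mu_{\phi, y}$ is). The normalization condition \eqref{eq:cond meas inverse} for $\tilde\nu$ is immediate: for $\phi\nu$-a.e. $y$,
\[
\tilde\nu_{\phi, y}(\phi^{-1}(y)) \;=\; \frac{\mu_{\phi, y}(\phi^{-1}(y) \cap F)}{\mu(F) g(y)} \;=\; \frac{\mu_{\phi, y}(F)}{\mu(F) g(y)} \;=\; 1,
\]
using $\mu_{\phi, y}(\phi^{-1}(y)) = 1$ which holds $\phi\mu$-a.e.\ and hence also $\phi\nu$-a.e. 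For the disintegration identity \eqref{eq:cond meas decomp}, for any $\mu$-measurable $E \subset X$,
\[
\int_{\R^k} \tilde\nu_{\phi, y}(E) \, d(\phi\nu)(y) \;=\; \int_{\R^k} \frac{\mu_{\phi, y}(E \cap F)}{\mu(F) g(y)} \cdot g(y) \, d(\phi\mu)(y) \;=\; \frac{1}{\mu(F)} \mu(E \cap F) \;=\; \nu(E),
\]
where the middle step applies \eqref{eq:cond meas decomp} for $\mu$ to the set $E \cap F$.

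By the uniqueness clause of the topological Rohlin disintegration theorem, $\nu_{\phi, y} = \tilde\nu_{\phi, y}$ for $\phi\nu$-a.e.\ $y \in \R^k$. Since $\phi\nu$ is carried by $\phi(F)$ up to a null set, this translates (pulling back by $\phi$, using that $\nu$-null preimages of $\phi\nu$-null sets) to: for $\nu$-a.e.\ $x \in F$,
\[
\nu_{\phi, \phi(x)} \;=\; f(x) \, \mu_{\phi, \phi(x)}|_F \qquad \text{with } f(x) := \frac{1}{\mu(F) \, g(\phi(x))} \in (0, \infty),
\]
which is the claimed identity. There is no real obstacle here beyond tracking the normalizing factors and invoking measurability of $y \mapsto \mu_{\phi, y}(F)$; the content of the lemma is essentially the uniqueness of Rohlin disintegration applied to a restricted measure.
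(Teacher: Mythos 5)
Your proof is correct, but it follows a genuinely different route from the paper's. The paper argues directly from the defining limit \eqref{eq:cond measure limit}: since $\phi(\mu|_F) \ll \phi\mu$, the differentiation theorem for measures (Besicovitch, \cite[Theorem~2.12]{mattila}) gives that $\phi(\mu|_F)(B(y,\delta))/\phi\mu(B(y,\delta))$ converges to a positive, finite Radon--Nikodym derivative for $\phi(\mu|_F)$-almost every $y$, and comparing the normalized restrictions $\nu|_{\phi^{-1}(B(y,\delta))}/\nu(\phi^{-1}(B(y,\delta)))$ with $\mu|_{\phi^{-1}(B(y,\delta))}/\mu(\phi^{-1}(B(y,\delta)))$ in the limit yields the claim, with $f$ essentially the reciprocal of that derivative composed with $\phi$. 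You instead build the explicit candidate $\tilde\nu_{\phi,y} = \mu_{\phi,y}(F)^{-1}\,\mu_{\phi,y}|_F$, verify the characterizing conditions \eqref{eq:cond meas decomp} and \eqref{eq:cond meas inverse} for $\nu$, and invoke the uniqueness clause of the Rohlin disintegration theorem, which the paper records right after \eqref{eq:cond meas inverse}. Your route has a real advantage: it avoids the differentiation theorem altogether and, more importantly, sidesteps the slightly delicate point implicit in the paper's argument that restricting the weak-$^*$ convergent measures in \eqref{eq:cond measure limit} to $F$ still converges to $\mu_{\phi,y}|_F$; the paper's route, in turn, stays entirely within the limit definition and produces $f$ concretely without having to check measurability of a candidate family. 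Two small points in your write-up are worth tightening: the identity $\mu(\phi^{-1}(A)\cap F) = \int_A \mu_{\phi,y}(F)\,d(\phi\mu)(y)$ uses \eqref{eq:cond meas inverse} (concentration on fibers) in addition to \eqref{eq:cond meas decomp}, and in verifying \eqref{eq:cond meas decomp} for $\tilde\nu$ you silently discard the set $\{g=0\}$, which is harmless because there $\mu_{\phi,y}(E\cap F) \le \mu_{\phi,y}(F) = \mu(F)g(y) = 0$. Note also that your normalizing factor simplifies to $f(x) = \mu_{\phi,\phi(x)}(F)^{-1}$, exactly the constant that makes $\nu_{\phi,\phi(x)}$ a probability measure, which is consistent with a direct computation from \eqref{eq:cond measure limit}.
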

\begin{proof} Note that $\phi(\mu|_F) \ll \phi\mu$, so by the differentiation theorem for measures (see e.g.~\cite[Theorem~2.12]{mattila}), the Radon-Nikodym derivative 
			\[\frac{d\phi(\mu|_F)}{d\phi \mu}(y) = \lim_{\delta \to 0} \frac{\phi(\mu|_F)(B(y,\delta))}{\phi\mu(B(y,\delta))}  \]
			exists and is positive and finite for $\phi(\mu|_F)$-almost every $y \in \R^k$. This together with \eqref{eq:cond measure limit} implies the assertion of the lemma for $f(x) = \left(\mu(F)\frac{d\phi(\mu|_F)}{d\phi \mu}(\phi(x)) \right)^{-1}$.
\end{proof}

\subsection{Geometric slices}\label{subsec:slices}

In the subsequent section, apart from the conditional measures defined in \eqref{eq:cond measure limit}, we use `geometric slices' of the measure $\mu$. 
More precisely, for a Borel map $\phi \colon X \to \R^k$ on a compact set $X \subset \R^N$ and a Borel probability measure $\mu$ on $X$ we define the system of \emph{geometric slices} $\mu^G_{\phi,y}$, $y \in \R^k$, of $\mu$ as weak-$^*$ limits (recall that $\mH^s$, $s > 0$ denotes the $s$-dimensional Hausdorff measure)
\[ \mu^G_{\phi,y} = \lim_{\delta \to 0}\ \frac{1}{\mH^k(B(y, \delta))} \mu|_{\phi^{-1}(B(y,\delta))} = \frac{1}{\mH^k(B(0,1))}\lim_{\delta \to 0}\frac{1}{\delta^k} \mu|_{\phi^{-1}(B(y,\delta))},\]
whenever the limit exists, and zero otherwise. The relation between the conditional measures and geometric slices (under an absolute continuity condition) is explained by the following lemma.

\begin{lem}\label{lem: geometric to conditional}
Let $\phi \colon X \to \R^k$, $k \in \N$, be a Borel map on a compact set $X \subset \R^N$, $N \in \N$, and let $\mu$ be a probability Borel measure on $X$. Assume $\phi \mu \ll \mH^k$. Then for $\phi \mu$-almost every $y \in \R^k$ there exists $0 < f(y) < \infty$ such that
\begin{equation*}\label{eq:geometric to conditional} \mu^G_{\phi,y} = f(y) \mu_{\phi,y}.
\end{equation*}
Moreover, 
\[ \mu(E) = \int_{\R^k} \mu^G_{\phi,y}(E)d\mH^k(y). \]
for every Borel set $E \subset X$.
\end{lem}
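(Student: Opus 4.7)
\medskip

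The plan is to identify the proportionality constant with the Radon--Nikodym derivative $f(y) = \frac{d\phi\mu}{d\mathcal{H}^k}(y)$. Since $\mathcal{H}^k$ restricted to $\mathbb{R}^k$ is a positive constant multiple of $k$-dimensional Lebesgue measure, the differentiation theorem for measures (e.g.\ \cite[Theorem~2.12]{mattila}) gives that $\phi\mu \ll \mathcal{H}^k$ implies
\[ f(y) = \lim_{\delta \to 0} \frac{\phi\mu(B(y,\delta))}{\mathcal{H}^k(B(y,\delta))} \in [0,\infty) \]
for $\mathcal{H}^k$-a.e.\ $y$. Moreover, $\phi\mu(\{f = 0\}) = \int_{\{f=0\}} f\, d\mathcal{H}^k = 0$, so $0 < f(y) < \infty$ for $\phi\mu$-a.e.\ $y$.

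Next I would write, for every $g \in C(X)$ and every $\delta > 0$ with $\phi\mu(B(y,\delta)) > 0$, the factorization
\[ \frac{1}{\mathcal{H}^k(B(y,\delta))} \int_{\phi^{-1}(B(y,\delta))} g\, d\mu = \frac{\phi\mu(B(y,\delta))}{\mathcal{H}^k(B(y,\delta))} \cdot \frac{1}{\phi\mu(B(y,\delta))} \int_{\phi^{-1}(B(y,\delta))} g\, d\mu. \]
For $\phi\mu$-a.e.\ $y$ the first factor tends to $f(y)$ by the previous paragraph, while the second factor tends to $\int g\, d\mu_{\phi,y}$ by the defining weak-$^*$ convergence \eqref{eq:cond measure limit} of the conditional measures. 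Hence for $\phi\mu$-a.e.\ $y$,
\[ \lim_{\delta \to 0} \frac{1}{\mathcal{H}^k(B(y,\delta))} \int_{\phi^{-1}(B(y,\delta))} g\, d\mu = f(y) \int g\, d\mu_{\phi,y}. \]
Since $X$ is compact metric, $C(X)$ is separable; picking a countable dense subset $\{g_n\}$ and removing the corresponding $\phi\mu$-null exceptional sets, a standard $3\varepsilon$-argument (using that the factor $\phi\mu(B(y,\delta))/\mathcal{H}^k(B(y,\delta))$ stays bounded near $f(y)$) upgrades the above convergence to all $g \in C(X)$ simultaneously, for $\phi\mu$-a.e.\ $y$. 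This shows the weak-$^*$ limit defining $\mu^G_{\phi,y}$ exists and equals $f(y)\mu_{\phi,y}$ for $\phi\mu$-a.e.\ $y$.

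For the integral identity, it is enough to combine the previous step with the Rohlin decomposition \eqref{eq:cond meas decomp} and the fact that $d\phi\mu = f\, d\mathcal{H}^k$. Given a Borel $E \subset X$,
\[ \mu(E) = \int_{\mathbb{R}^k} \mu_{\phi,y}(E)\, d\phi\mu(y) = \int_{\mathbb{R}^k} f(y)\, \mu_{\phi,y}(E)\, d\mathcal{H}^k(y) = \int_{\mathbb{R}^k} \mu^G_{\phi,y}(E)\, d\mathcal{H}^k(y), \]
where in the last equality I use $\mu^G_{\phi,y} = f(y)\mu_{\phi,y}$ on the set $\{f > 0\}$ (which carries all of $\phi\mu$, hence all of $f\,d\mathcal{H}^k$), together with the observation that on $\{f = 0\}$ the total mass $\mu^G_{\phi,y}(X) = \lim_\delta \phi\mu(B(y,\delta))/\mathcal{H}^k(B(y,\delta)) = 0$ forces $\mu^G_{\phi,y} = 0$, so both integrands vanish there.

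The only subtle point is the careful bookkeeping in the fourth step: one must verify that the set of ``good'' $y$ can be chosen independently of the test function $g$, and that the $\mathcal{H}^k$-almost-everywhere vanishing of $\mu^G_{\phi,y}$ on $\{f = 0\}$ is sufficient to justify switching the reference measure from $\phi\mu$ to $\mathcal{H}^k$ in the disintegration formula. Both are routine once $f$ is introduced.
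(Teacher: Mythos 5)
Your proof is correct and follows essentially the same route as the paper: identify $f$ with the Radon--Nikodym derivative $\frac{d(\phi\mu)}{d\mH^k}$, use the differentiation theorem to realize it as the limit of the ratios $\phi\mu(B(y,\delta))/\mH^k(B(y,\delta))$, deduce $\mu^G_{\phi,y}=f(y)\mu_{\phi,y}$ from the defining weak-$^*$ limits, and obtain the integral identity by changing measure in the Rohlin decomposition \eqref{eq:cond meas decomp}. Your extra separability/$3\varepsilon$ step is not needed (the exceptional set in \eqref{eq:cond measure limit} is already independent of the test function, and a scalar limit times a weak-$^*$ convergent family of probability measures converges), but it is harmless, and your explicit treatment of the set $\{f=0\}$ only supplies details the paper leaves implicit.
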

\begin{proof}
Set $f$ to be the Radon--Nikodym derivative $\frac{d(\phi \mu)}{d\mH^k}$. Then $0 < f(y) < \infty$ for $\phi \mu$-almost every $y \in \R^k$ and  $f(y) = \lim_{\delta \to 0} \frac{\phi \mu(B(y,\delta))}{\mH^k(B(y,\delta))}$ for $\mH^k$-almost every $y \in \R^k$, which provides the first assertion of the lemma. Furthermore, for every Borel set $E \subset X$,
\[ \mu(E) = \int_{\R^k} \mu_{\phi,y}(E)d(\phi \mu)(y) = \int_{\R^k} \mu_{\phi,y}(E)f(y)d\mH^k(y) = \int_{\R^k} \mu^G_{\phi,y}(E)d\mH^k(y).  \] 
\end{proof}

We will also make use of the following simple observation. If $g \colon X \to [0, \infty]$ is lower semi-continuous, then for $\phi \mu$-almost every $y \in \R^k$,
\begin{equation}\label{eq:muG liminf}
	\int g\, d \mu^G_{\phi,y} \leq  \frac{1}{\mH^k(B(0,1))}\liminf_{\delta \to 0} \frac{1}{\delta^k} \int_{\phi^{-1}(B(y,\delta))} g\, d\mu.
\end{equation}
This follows from the definition of $\mu^G_{\phi,y}$ as a weak-$^*$ limit and the fact that a lower semi-continuous function $g \colon X \to [0, \infty]$ is a non-decreasing limit of a sequence of non-negative continuous functions.

\begin{rem}\label{rem:cond_vs_slices} The advantage of switching from conditional measures to geometric slices is that the latter are better suited for the transversality argument used in the proofs. In Theorem~\ref{thm:absolute continuity full} we will show that under assumptions of our main theorems, the condition $\phi \mu \ll \mH^k$ is satisfied, when $\phi$ is a (typical) $k$-delay coordinate map. Then by Lemma~\ref{lem: geometric to conditional}, the dimensions of $\mu_{\phi,y}$ and $\mu^G_{\phi,y}$ coincide for $\phi \mu$-almost every $y \in \R^k$. 
\end{rem}

\section{Proofs of Theorems~\ref{thm:injectivity sections}--\ref{thm:absolute continuity}}\label{sec:proof sections}

In this section we prove suitable versions of Theorems~\ref{thm:injectivity sections}--\ref{thm:absolute continuity}, using the prevalence condition described in Remark~\ref{rem:cond_for_preval}, under the notation introduced in Subsection~\ref{subsec:matrices}.
\subsection{Proof of Theorem~\ref{thm:absolute continuity}} \label{subsec:proof abs cont}

The following result is the suitable version of Theorem~\ref{thm:absolute continuity}.

\begin{thm}\label{thm:absolute continuity full}
Let $X \subset\R^N$, $N \in \N$, be a compact set, let $\mu$ be a Borel probability measure on $X$ and let $T\colon X \to X$ be a Lipschitz map. Fix $k \in \N$ such that $k < \lhdim T^{k-1} \mu$ and $($in the case $k > 1)$ assume $\mu ( \Prep_{k-1}(T) ) = 0$. Fix $d \geq 2k - 1$ and let $\{h_1, \ldots , h_m\}$ be the set of all monomials of $N$ variables of degree at most $d$. Let $h \colon X \to \R$ be a Lipschitz observable. Then for Lebesgue-almost every $\alpha \in \R^m$ we have $\phi_{\alpha} \mu \ll \mH^k$, where $\phi_{\alpha}$ is the $k$-delay coordinate map corresponding to $h_\alpha$.

If, additionally, $\mu$ is $T$-invariant or $T$ is bi-Lipschitz onto its image, then the condition $k < \lhdim T^{k-1}\mu$ may be replaced by $k < \lhdim \mu$ and the condition $\mu(\Prep_{k-1}(T)) = 0$ may be replaced by $\mu\big(\bigcup_{p=1}^{k-1} \Per_p(T)\big) = 0$.

\end{thm}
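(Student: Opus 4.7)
My plan is to follow the classical Marstrand--Mattila projection-theorem strategy, adapted to delay-coordinate maps via the transversality estimates of Subsection~\ref{subsec:matrices}. Following Remark~\ref{rem:cond_for_preval}, it suffices, for a fixed Lipschitz $h$, to show that the set of $\alpha \in \R^m$ for which $\phi_\alpha \mu \ll \mH^k$ has full Lebesgue measure. I first dispense with the ``additional'' assertions. If $T$ is bi-Lipschitz onto its image, bi-Lipschitz invariance of the lower Hausdorff dimension of measures gives $\lhdim T^{k-1}\mu = \lhdim \mu$, and injectivity of $T$ yields $\Prep_{k-1}(T) = \bigcup_{p=1}^{k-1}\Per_p(T)$. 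If $\mu$ is $T$-invariant, then $T^{k-1}\mu = \mu$, while from $\Prep_{k-1}(T) \subset T^{-(k-1)}\bigl(\bigcup_{p=1}^{k-1}\Per_p(T)\bigr)$ and invariance one obtains $\mu(\Prep_{k-1}(T)) = \mu\bigl(\bigcup_{p=1}^{k-1}\Per_p(T)\bigr)$. In either case the proof reduces to the main statement.

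Next I choose $\eta > 0$ with $\lhdim T^{k-1}\mu - \eta > k$ and apply Proposition~\ref{prop:decomp} with $\ell = k-1$ to obtain a countable family $\mF$ of compact sets satisfying \ref{it:F sum}--\ref{it:orbit separation}. Since $\mu(\bigcup_{F\in \mF} F) = 1$ and $\mF$ is countable, it suffices to prove that for each fixed $F \in \mF$, setting $\mu_F = \mu(F)^{-1}\mu|_F$, we have $\phi_\alpha \mu_F \ll \mH^k$ for Lebesgue-a.e. $\alpha$. For this I will use the following criterion: if $\liminf_{\delta\to 0}\nu(B(y,\delta))/\delta^k < \infty$ for $\nu$-a.e. $y$, then $\nu \ll \mH^k$. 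This follows from differentiation of measures: a nontrivial singular part $\nu_s$ of $\nu$ satisfies $\liminf_{\delta\to 0}\mathcal L^k(B(y,\delta))/\nu_s(B(y,\delta)) = 0$ on a $\nu_s$-positive set, forcing $\liminf_\delta \nu(B(y,\delta))/\delta^k = \infty$ there. Hence for every $\rho > 0$ it is enough to bound
\[
I_\rho := \int_{B_m(0,\rho)} \int_F \liminf_{\delta\to 0}\frac{\phi_\alpha\mu_F(B(\phi_\alpha(x),\delta))}{\delta^k}\, d\mu_F(x)\, d\alpha.
\]

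By Fatou's lemma, Fubini and the identity $\phi_\alpha\mu_F(B(\phi_\alpha(x),\delta)) = \int_F \mathbf{1}_{\|\phi_\alpha(x)-\phi_\alpha(y)\|<\delta}\,d\mu_F(y)$, together with \eqref{eq:matrix_form},
\[
I_\rho \le \liminf_{\delta \to 0}\frac{1}{\delta^k}\int_F\int_F \int_{B_m(0,\rho)} \mathbf{1}_{\|D_{x,y}\alpha + w_{x,y}\| < \delta}\, d\alpha\, d\mu_F(y)\, d\mu_F(x).
\]
Condition \ref{it:orbit separation} of Proposition~\ref{prop:decomp} combined with Proposition~\ref{prop:singular value on orbit} yields a uniform lower bound $\sigma_k(D_{x,y}) \ge C\,\eps(F)^{2k-2}\|T^{k-1}x - T^{k-1}y\|$ on $F\times F$, valid whenever $T^{k-1}x \ne T^{k-1}y$. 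Applying Lemma~\ref{lem: key_ineq_inter} with $p = k$ produces exactly the factor $\delta^k$ needed to cancel, and gives
\[
I_\rho \le C(\rho, F)\int_F \int_F \frac{d\mu_F(x)\, d\mu_F(y)}{\|T^{k-1}x - T^{k-1}y\|^k} = C(\rho,F)\,\mE_k\bigl(T^{k-1}\mu_F\bigr).
\]
The diagonal $\{T^{k-1}x = T^{k-1}y\}$ contributes nothing because $T^{k-1}\mu_F$ is non-atomic, as $\lhdim T^{k-1}\mu_F > 0$. By property \ref{it:cdim>hdim} of $\mF$, $\cdim T^{k-1}\mu_F \ge \lhdim T^{k-1}\mu - \eta > k$, so $\mE_k(T^{k-1}\mu_F) < \infty$ and $I_\rho < \infty$. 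Letting $\rho \to \infty$ and intersecting over the countable family $\mF$ gives $\phi_\alpha \mu = \sum_{F}\mu(F)\phi_\alpha\mu_F \ll \mH^k$ for Lebesgue-a.e. $\alpha$.

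The main obstacle I anticipate is the interplay of the three limiting procedures: tracking that the exponent $p=k$ in Lemma~\ref{lem: key_ineq_inter} is attainable precisely under the assumption $k < \lhdim T^{k-1}\mu$ (not some strict excess), verifying that the diagonal set of pairs with $T^{k-1}x = T^{k-1}y$ can be safely discarded via the non-atomicity argument, and ensuring that the $\liminf$-based absolute continuity criterion is legitimately applied at $\phi_\alpha\mu_F$-almost every $y$ rather than at $\mu_F$-almost every $x$ (which follows from the push-forward identity $\int g(\phi_\alpha(x))\,d\mu_F(x) = \int g(y)\,d\phi_\alpha\mu_F(y)$). Everything else is bookkeeping: applying Tonelli to a nonnegative integrand, invoking the decomposition to get uniform orbit separation, and assembling the countably many ``good'' sets of $\alpha$.
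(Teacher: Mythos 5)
Your argument is correct and is essentially the paper's own proof: the same decomposition via Proposition~\ref{prop:decomp} with $\ell=k-1$, the same density criterion for $\phi_\alpha(\mu|_F)\ll\mH^k$, the same Fatou/Tonelli reduction combined with Lemma~\ref{lem: key_ineq_inter} (with $p=k$) and the transversality bound of Proposition~\ref{prop:singular value on orbit}, the same finiteness of $\mE_k(T^{k-1}(\mu|_F))$ from property (ii) of the decomposition, and the same treatment of the two additional assertions. The only deviations are cosmetic (integrating over $B_m(0,\rho)$ for all $\rho$ instead of over $B_m(0,1)$ plus rescaling, and making the diagonal/non-atomicity point explicit); note only that the sets in $\mF$ need not be disjoint, so the final identity $\phi_\alpha\mu=\sum_F\mu(F)\,\phi_\alpha\mu_F$ should be replaced by countable subadditivity over the cover with $\mu\big(\bigcup_{F\in\mF}F\big)=1$, which is exactly how the paper concludes and yields $\phi_\alpha\mu\ll\mH^k$ all the same.
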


\begin{proof} It is easy to check that it is sufficient to prove the assertion for  Lebesgue-almost every $\alpha \in B_m(0,1)$ (then the general case follows by multiplying $h$ by a positive constant). 
Choose $\eta > 0$ such that $\lhdim T^{k-1} \mu - \eta > k$. Consider the collection $\mF$ from Proposition~\ref{prop:decomp} corresponding to $\ell = k-1$ and $\eta$. Fix $F \in \mF$. We will prove that $\phi_\alpha(\mu|_F) \ll \mH^{k}$ for Lebesgue-almost every $\alpha \in B_m(0,1)$. As $\mu \left(\bigcup_{F \in \mF} F\right) = 1$, this will finish the proof.

To prove $\phi_\alpha(\mu|_F) \ll \mH^{k}$, it suffices to show (see \cite[Theorem~2.12]{mattila}) that for $\phi_\alpha(\mu|_F)$-almost every $z \in \R^k$ we have
\[ 
\liminf_{\delta \to 0} \frac{\phi_\alpha(\mu|_F)(B(z,\delta))}{\delta^k} < \infty.
\]
Therefore, it is enough to show
\[ I = \int_{B_m(0,1)} \int_{\R^k}\liminf_{\delta \to 0} \frac{\phi_\alpha(\mu|_F)(B(z,\delta))}{\delta^k} \: d\phi_\alpha(\mu|_F)(z)d\alpha < \infty. \]
For that, we proceed as follows. First, by Fatou's lemma,
\begin{align*} I & \leq \liminf_{\delta \to 0} \frac{1}{\delta^k}  \int_{B_m(0,1)} \int_{F} \mu\left( F \cap \phi^{-1}_\alpha( B(\phi_\alpha(x), \delta))\right)d\mu(x)d\alpha \\
	& =  \liminf_{\delta \to 0} \frac{1}{\delta^k}  \int_{B_m(0,1)} \int_{F} \int_{F} \mathds{1}_{\phi^{-1}_\alpha( B(\phi_\alpha(x), \delta))}(y) \: d\mu(y)d\mu(x)d\alpha.
\end{align*}
Consequently, by Tonelli's theorem, \eqref{eq:matrix_form} and Lemma~\ref{lem: key_ineq_inter},
\begin{align*}
	I & \leq \liminf_{\delta \to 0} \frac{1}{\delta^k}  \int_{F} \int_{F} \int_{B_m(0,1)}  \mathds{1}_{ \{ \alpha \in B_m(0,1) : \| \phi_\alpha(x) - \phi_\alpha(y) \| \leq \delta \}}(\alpha) \: d\alpha d\mu(x)d\mu(y) \\
	& = \liminf_{\delta \to 0} \frac{1}{\delta^k}  \int_{F} \int_{F} \Leb\left( \{ \alpha \in B_m(0,1) : \| \phi_\alpha(x) - \phi_\alpha(y) \| \leq \delta \} \right) d\mu(x)d\mu(y) \\
		& = \liminf_{\delta \to 0} \frac{1}{\delta^k}  \int_{F} \int_{F} \Leb\left( \{ \alpha \in B_m(0,1) : \|D_{x,y} \alpha + w_{x,y} \| \leq \delta \} \right) d\mu(x)d\mu(y) \\
		& \leq C_1 \int_{F} \int_{F} \frac{d\mu(x)d\mu(y)}{(\sigma_k(D_{x,y}))^k}
	\end{align*}
for some $C_1 > 0$. By Proposition~\ref{prop:decomp}.\ref{it:orbit separation}, we can apply Proposition~\ref{prop:singular value on orbit} to obtain
\[
	I \leq C_2 \int_{F} \int_{F} \frac{d\mu(x)d\mu(y)}{\|T^{k-1} x - T^{k-1} y\|^k}  = C_2  \mE_k(T^{k-1} (\mu|_F)) 
\]
for some $C_2 = C_2(F) > 0$. Since $\cdim T^{k-1} (\mu|_F) \geq \lhdim T^{k-1} \mu - \eta  > k$ by Proposition~\ref{prop:decomp}\ref{it:cdim>hdim}, we obtain $\mE_k(T^{k-1} (\mu|_F)) < \infty$, which gives $I < \infty$ and ends the proof of the absolute continuity of $\phi_\alpha \mu$.

To show the additional assertions, note that a push-forward by a bi-Lipschitz map does not change the dimension of the measure and injectivity of $T$ implies that every pre-periodic point $x \in \Prep_{k-1}$ is actually periodic of period at most $k-1$. Furthermore, if $\mu$ is $T$-invariant and $\mu\big(\bigcup_{p=1}^{k-1} \Per_p(T)\big) = 0$, then $\mu(\Prep_{k-1}(T)) = 0$ since  $\Prep_{k-1}(T) \subset T^{-(k-1)}(\bigcup_{p=1}^{k-1}\Per_p(T))$.
\end{proof}

\subsection{Proof of Theorem~\ref{thm:injectivity sections}} \label{subsec:proof injectivity sections}
To prove Theorem~\ref{thm:injectivity sections}, first we show the following result, where we assume $\mu ( \Prep_{k-1}(T)) = 0$. Later we will explain how to remove this assumption. For simplicity, here and in the sequel we write $\mu_{\alpha, y}$ (resp.~$\mu^G_{\alpha, y}$), $y \in \R^k$, for conditional measures (resp.~geometric slices) of a Borel probability measure $\mu$ on $X$ with respect to a $k$-delay coordinate map $\phi_\alpha$.

\begin{thm}\label{thm:injectivity sections lower full}
Let $X \subset\R^N$, $N \in \N$, be a compact set, let $\mu$ be a Borel probability measure on $X$ and let $T\colon X \to X$ be a Lipschitz map. Fix $k \in \N$ and $($in the case $k > 1)$ assume $\mu ( \Prep_{k-1}(T) ) = 0$. Fix $d \geq 2k - 1$ and let $\{h_1, \ldots , h_m\}$ be the set of all monomials of $N$ variables of degree at most $d$. Let $h \colon X \to \R$ be a Lipschitz observable. Then for Lebesgue-almost every $\alpha \in \R^m$,
\[ \lhdim T^{k-1} (\mu_{\alpha, \phi_\alpha(x)}) \geq \lhdim T^{k-1} \mu - k \quad \text{for } \mu\text{-almost every } x \in X, \]
where $\phi_{\alpha}$ is the $k$-delay coordinate map corresponding to $h_\alpha$.
\end{thm}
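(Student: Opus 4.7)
I follow the Marstrand--Mattila slicing approach adapted to delay-coordinate maps, using geometric slices as an intermediary thanks to Theorem~\ref{thm:absolute continuity full}. Set $t = \lhdim T^{k-1}\mu$; the claim is trivial when $t \le k$, so assume $t > k$. Fix $s < t - k$ and $\eta > 0$ with $s + k < t - \eta$; as in the proof of Theorem~\ref{thm:absolute continuity full}, it suffices to work with $\alpha \in B_m(0,1)$. Apply Proposition~\ref{prop:decomp} with $\ell = k-1$ to obtain a countable family $\mF$ of compact sets whose union has full $\mu$-measure, each carrying the orbit separation required by Proposition~\ref{prop:singular value on orbit} and with $\cdim T^{k-1}(\mu|_F) \ge t - \eta$. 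Since $k < t$ and $\mu(\Prep_{k-1}(T)) = 0$, Theorem~\ref{thm:absolute continuity full} yields $\phi_\alpha \mu \ll \mH^k$ for Lebesgue-a.e.\ $\alpha$, so by Lemma~\ref{lem: geometric to conditional} the geometric slice $(\mu|_F)^G_{\alpha, y}$ is a positive-constant multiple of $(\mu|_F)_{\alpha, y}$ at $\phi_\alpha(\mu|_F)$-a.e.\ $y$, and by Lemma~\ref{lem:restricted cond meas} the latter is in turn a positive multiple of $\mu_{\alpha,y}|_F$; these rescalings preserve Hausdorff dimension.

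The heart of the argument is, for each fixed $F \in \mF$, the estimate
\[
I_F \;:=\; \int_{B_m(0,1)} \int_{\R^k} \mE_s\!\left(T^{k-1}(\mu|_F)^G_{\alpha, y}\right) d\mH^k(y)\, d\alpha \;\le\; C\, \mE_{s+k}\!\left(T^{k-1}(\mu|_F)\right).
\]
I expand the $s$-energy as a double integral against $(\mu|_F)^G_{\alpha,y} \otimes (\mu|_F)^G_{\alpha,y}$, and use the weak-$*$ definition of the geometric slice together with the lower semi-continuity of $(w_1, w_2) \mapsto \|T^{k-1}w_1 - T^{k-1}w_2\|^{-s}$ (the product analogue of \eqref{eq:muG liminf}) to dominate the inner integrand by a $\liminf_{\delta\to 0}$ of $\delta^{-2k}$ times the integral of $\|T^{k-1}w_1 - T^{k-1}w_2\|^{-s}\,d\mu|_F(w_1)\,d\mu|_F(w_2)$ over pairs with $\phi_\alpha(w_i) \in B(y, \delta)$. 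Fatou and Tonelli then reorder the integrals: integration over $y \in \R^k$ contributes $\le C\delta^k$ from the intersection of two $\delta$-balls in $\R^k$, while Lemma~\ref{lem: key_ineq_inter} combined with Proposition~\ref{prop:singular value on orbit} yields
\[
\Leb\!\left(\{\alpha \in B_m(0,1) : \|\phi_\alpha(w_1) - \phi_\alpha(w_2)\| \le 2\delta\}\right) \;\le\; \frac{C\delta^k}{\|T^{k-1}w_1 - T^{k-1}w_2\|^k}.
\]
The two factors of $\delta^k$ absorb the normalization $\delta^{2k}$, and the bound $I_F \le C\,\mE_{s+k}(T^{k-1}(\mu|_F)) < \infty$ follows since $s + k < t - \eta \le \cdim T^{k-1}(\mu|_F)$ by Proposition~\ref{prop:decomp}.\ref{it:cdim>hdim}. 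Fubini then shows that for Lebesgue-a.e.\ $\alpha$ and $\phi_\alpha(\mu|_F)$-a.e.\ $y$, $\mE_s(T^{k-1}(\mu|_F)^G_{\alpha,y}) < \infty$, so by Lemma~\ref{lem:corr to Hausdorff} and the identifications above, $\lhdim T^{k-1}(\mu_{\alpha,y}|_F) \ge s$.

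A countable intersection over $F \in \mF$ produces a single full-measure set of $\alpha$ on which this holds for every $F$. To transfer the bound from the restricted conditional measure to the full one, I observe that $T^{k-1}(\mu_{\alpha,y}|_F) \le T^{k-1}\mu_{\alpha,y}$ and apply the Lebesgue density theorem on $\R^N$ to these two finite Radon measures: their Radon--Nikodym ratio tends to $1$ at $T^{k-1}(\mu_{\alpha,y}|_F)$-a.e.\ $z$, so the lower local dimensions of the two measures agree there. Thus $\underline{d}(T^{k-1}\mu_{\alpha,y}, T^{k-1}w) \ge s$ for $\mu_{\alpha,y}|_F$-a.e.\ $w$; since disintegration of $\mu(\bigcup_{F \in \mF} F) = 1$ gives $\mu_{\alpha,y}(\bigcup_{F \in \mF} F) = 1$ for $\phi_\alpha\mu$-a.e.\ $y$, a countable union over $\mF$ upgrades this to $\mu_{\alpha,y}$-a.e.\ $w$, whence $\lhdim T^{k-1}\mu_{\alpha,y} \ge s$ by the essential-infimum characterization. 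Running $s$ along a countable sequence approaching $t - k$ finishes the proof. The main obstacle is the integration estimate, where one must arrange for the transversality cancellation to produce exactly the right power of $\delta$; the final restriction-to-full passage is a secondary but subtle point, since restriction of a measure can only \emph{raise} the lower local dimension, making the Lebesgue density argument indispensable.
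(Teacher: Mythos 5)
Your proposal is correct and follows the same overall scaffolding as the paper's proof: the decomposition of Proposition~\ref{prop:decomp} with $\ell=k-1$, absolute continuity from Theorem~\ref{thm:absolute continuity full}, passage to geometric slices via Lemma~\ref{lem: geometric to conditional}, the transversality input from Proposition~\ref{prop:singular value on orbit} combined with Lemma~\ref{lem: key_ineq_inter}, and Lemma~\ref{lem:corr to Hausdorff} to convert finite energy into the dimension bound. Two steps are executed differently. In the energy estimate, the paper applies the semicontinuity bound \eqref{eq:muG liminf} to one factor only and eliminates the other factor together with the $z$-integration exactly, through the identity in Lemma~\ref{lem: geometric to conditional}, so only one $\delta^{-k}$ appears and it is cancelled by Lemma~\ref{lem: key_ineq_inter}; you instead apply a product version of \eqref{eq:muG liminf} to both factors, pay $\delta^{-2k}$, and recover one $\delta^{k}$ from the $\mH^k$-measure of the intersection of two $\delta$-balls (the $y$-integration) and the other from the transversality bound (the $\alpha$-integration). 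This works and gives the same bound $C\,\mE_{s+k}(T^{k-1}(\mu|_F))$, provided you record that weak-$^*$ convergence of the normalized restrictions on the compact set $X$ implies weak-$^*$ convergence of their self-products (so that the product analogue of \eqref{eq:muG liminf} is legitimate). Second, your transfer from the restricted conditional measures $\mu_{\alpha,y}|_F$ back to $\mu_{\alpha,y}$ is a genuine issue (restriction can only raise $\lhdim$), and your treatment -- comparing lower local dimensions of $T^{k-1}(\mu_{\alpha,y}|_F)$ and $T^{k-1}\mu_{\alpha,y}$ via the differentiation theorem for measures and then patching over the countable family $\mF$ using $\mu_{\alpha,y}\bigl(\bigcup_{F\in\mF}F\bigr)=1$ -- is correct and in fact more explicit than the paper's one-line reduction via Lemma~\ref{lem:restricted cond meas}. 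One small inaccuracy: for $\rho_1=T^{k-1}(\mu_{\alpha,y}|_F)\le\rho_2=T^{k-1}\mu_{\alpha,y}$ the ratio $\rho_1(B(z,r))/\rho_2(B(z,r))$ tends $\rho_1$-a.e.\ to the Radon--Nikodym derivative $d\rho_1/d\rho_2$ (by \cite[Theorem~2.12]{mattila}), which is positive and finite but in general not equal to $1$; since positivity and finiteness are all your local-dimension comparison needs, the argument stands as written.
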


\begin{proof} As previously, it is sufficient to prove the assertion for Lebesgue-almost every $\alpha \in B_m(0,1)$. 
	Obviously, we can assume $\lhdim T^{k-1} \mu > k$. Choose $\eta,s > 0$ such that 
	\[   
\eta < \lhdim T^{k-1} \mu -k, \qquad k < s < \lhdim T^{k-1} \mu - \eta.
	\]
  Consider the collection $\mF$ from Proposition~\ref{prop:decomp} corresponding to $\ell = k-1$ and $\eta$. Fix $F \in \mF$. We will prove
	\begin{equation}\label{eq:aux}
	 \lhdim  T^{k-1} (\mu_{\alpha, \phi(x)}) \geq s - k\text{ for } \mu\text{-almost every } x \in F \text{ and almost every } \alpha \in B_m(0,1).
	\end{equation}
	As $\eta, s$ can be chosen such that $s$ is arbitrarily close to $\lhdim T^{k-1} \mu$ and $\mu \left(\bigcup_{F \in \mF} F\right) = 1$, proving \eqref{eq:aux} will conclude the proof of the theorem.
	
	Let
	\[\nu = \frac{1}{\mu(F)}\mu|_F.\] 
	As by Lemma~\ref{lem:restricted cond meas}, for $\nu$-almost every $x \in F$, the measures $\nu_{\alpha, \phi_\alpha(x)}$ and $\mu_{\alpha, \phi_\alpha(x)}|_F$ are equal up to a multiplication by a positive constant, to prove \eqref{eq:aux} it is sufficient to show
	\begin{equation}\label{eq: nu lhdim bound injectivity} \lhdim T^{k-1} (\nu_{\alpha,\phi_\alpha(x)}) \geq s - k \quad \text{for } \nu\text{-almost every } x \in F \text{ and $\Leb$-almost every } \alpha \in B_m(0,1).
	\end{equation}
	The remainder of the proof is devoted to verifying \eqref{eq: nu lhdim bound injectivity}. 
	
	As $\lhdim T^{k-1} \nu \geq \lhdim T^{k-1} \mu > k$ and $($in the case $k > 1)$ $$\nu ( \Prep_{k-1}(T) ) \le \frac{1}{\mu(F)}\mu ( \Prep_{k-1}(T) ) = 0,$$ Theorem~\ref{thm:absolute continuity full} implies $\phi_\alpha \nu \ll \mH^k$ for Lebesgue-almost every $\alpha \in B_m(0,1)$. Hence, by Lemma~\ref{lem: geometric to conditional}, the assertion \eqref{eq: nu lhdim bound injectivity} will follow from
	\begin{equation}\label{eq: nu lhdim bound injectivity2}\lhdim   T^{k-1} (\nu^G_{\alpha, z}) \geq s - k \quad \text{for } \mH^k\text{-almost every } z \in \R^k \text{ and $\Leb$-almost every } \alpha \in B_m(0,1)
	\end{equation}
	(with the convention that $\lhdim$ of a zero measure is zero). By Lemma~\ref{lem:corr to Hausdorff}, to show \eqref{eq: nu lhdim bound injectivity2}  it suffices to prove
	\begin{equation}\label{eq:inject slices main integral}
		I = \int_{B_m(0,1)} \int_{\R^k} \mE_{s-k}(T^{k-1} (\nu^G_{\alpha,z})) d\mH^k(z)d\alpha < \infty.
	\end{equation}
	
	Now we check \eqref{eq:inject slices main integral}. As the function $x \mapsto \|T^{k-1}x - T^{k-1}y\|^{k-s} \in [0, +\infty]$ is lower semi-continuous, applying \eqref{eq:muG liminf} for the measure $\nu$ together with Fatou's lemma implies
	\begin{align*}
		I & = \int_{B_m(0,1)} \int_{\R^k} \int_F  \int_F   \frac{d\nu^G_{\alpha, z}(x) d\nu^G_{\alpha, z}(y) d\mH^k(z)d\alpha}{\|T^{k-1}x - T^{k-1}y\|^{s-k}} \\
		& \leq \liminf_{\delta \to 0} \frac{C_1}{\delta^k} \int_{B_m(0,1)} \int_{\R^k} \int_F  \int_{\phi_\alpha^{-1}(B(z, \delta))}   \frac{d\nu(x)d\nu^G_{\alpha, z}(y) d\mH^k(z)d\alpha}{\|T^{k-1}x - T^{k-1}y\|^{s-k}}
	\end{align*}
	for some $C_1>0$. By Lemma~\ref{lem: geometric to conditional},
	\[ \int_{\R^k} \int_{F} \frac{d\nu^G_{\alpha,z}(y)d\mH^k(z)}{\|T^{k-1}x - T^{k-1}y\|^{s-k}} = \int_{F}  \frac{d\nu(y)}{\|T^{k-1}x - T^{k-1}y\|^{s-k}}. \]
	Therefore, by Tonelli's theorem,
	\begin{align*} 	I & \leq \liminf_{\delta \to 0} \frac{C_1}{\delta^k} \int_{B_m(0,1)} \int\limits_{F}  \int_{\phi_\alpha^{-1}(B(\phi_\alpha(y), \delta))}   \frac{d\nu(x) d \nu(y) d\alpha}{\|T^{k-1}x - T^{k-1}y\|^{s-k}} \\
		& =  \liminf_{\delta \to 0} \frac{C_1}{\delta^k} \int_{B_m(0,1)} \int_{F}  \int_F   \frac{\mathds{1}_{\phi_\alpha^{-1}(B(\phi_\alpha(y), \delta))}(x)}{\|T^{k-1}x -T^{k-1}y\|^{s-k}} \: d\nu(x) d \nu(y) d\alpha\\
		& = \liminf_{\delta \to 0} \frac{C_1}{\delta^k}  \int_{F}  \int_F \int_{B_m(0,1)} \frac{\mathds{1}_{\{ \alpha \in B_m(0,1) : \|\phi_\alpha(x) - \phi_\alpha(y)\| \leq \delta \}}(\alpha)}{\|T^{k-1}x - T^{k-1}y\|^{s-k}} \: d\alpha d\nu(x) d \nu(y) \\
		& = \liminf_{\delta \to 0} \frac{C_1}{\delta^k}  \int_{F}  \int_F \frac{\Leb(\{ \alpha \in B_m(0,1) : \|\phi_\alpha(x) - \phi_\alpha(y)\| \leq \delta \})}{\|T^{k-1}x - T^{k-1}y\|^{s-k}} \: d\nu(x) d \nu(y).
	\end{align*}
	Consequently, by Lemma~\ref{lem: key_ineq_inter},
	\[	I \leq C_1\int_{F}  \int_F  \frac{d\nu(x) d \nu(y)}{(\sigma_k(D_{x,y}))^k \|T^{k-1}x - T^{k-1}y\|^{s-k}}.\]
	By Proposition~\ref{prop:decomp}\ref{it:orbit separation}, we can apply Proposition~\ref{prop:singular value on orbit} to obtain
	\[ I \leq C_2 \int_{F}  \int_F  \frac{d\nu(x) d \nu(y)}{\|T^{k-1}x - T^{k-1}\|^s} = C_2\mE_s(T^{k-1} \nu).\]
	for some $C_2 = C_2(F)> 0$. As $\cdim T^k \nu \geq \lhdim T^k \mu - \eta > s$ by Proposition~\ref{prop:decomp}.\ref{it:cdim>hdim}, we have $\mE_s(T^{k-1} \nu) < \infty$, which establishes \eqref{eq:inject slices main integral} and finishes the proof of the theorem.

	\end{proof}

\begin{proof}[Proof of Theorem~\rm\ref{thm:injectivity sections}]
	Assertion~\ref{it:injectivity sections lhdim} of Theorem~\ref{thm:injectivity sections} is a direct consequence of Theorem~\ref{thm:injectivity sections lower full}, if we assume additionally $\mu \left( \Prep_{k-1}(T) \right) = 0$ in the case $k>1$. Therefore, it remains to prove that this assumption can be omitted.
	
	For $p \in \N \cup \{0\}$ and $q \in \N$ define
	\[X_{p,q} = \{ x \in X : x, Tx, \ldots, T^{p + q -1}x \text{ are pairwise distinct and } T^{p+q}x = T^{p} x \}.\]
	In other words, $X_{p,q}$ is the union of all pre-periodic orbits under $T$ with the pre-periodic part of length $p$ and the periodic part of length $q$. Let
	\[ \mR = \{ (p,q) : p \in \N \cup \{0\},\, q \in \N,\, p+q \leq k-1 \}, \qquad Y = X \setminus \bigcup_{(p,q) \in \mR} X_{p,q}\]
	and set
	\[ 
	\mu^{p,q} = 
	\begin{cases}   
	\frac{1}{\mu(X_{p,q})}\mu|_{X_{p,q}} &\text{if } \mu(X_{p,q}) > 0\\
	0 &\text{if } \mu(X_{p,q}) = 0
	\end{cases}
, \qquad \nu = 
\begin{cases}   
	\frac{1}{\mu(Y)}\mu|_Y &\text{if } \mu(Y) > 0\\
	0 &\text{if } \mu(Y) = 0
	\end{cases}. \]
	As
	\[ \mu = \mu|_Y + \sum_{(p,q) \in \mR} \mu|_{X_{p,q}},\]
	we have that for $\mu$-almost every $x \in X$, the conditional measure $\mu_{\phi_{h,k}, \phi_{h,k}(x)}$ with respect to a $k$-delay coordinate map $\phi_{h,k}$ corresponding to an observable $h$ (here and in the sequel it is convenient to include the number of measurements in the notation for the delay coordinate map) is a convex combination of the conditional measures $\mu^{p,q}_{\phi_{h,k}, \phi_{h,k}(x)}$, $(p,q) \in \mR$, and $\nu_{\phi_{h,k}, \phi_{h,k}(x)}$. Therefore, it suffices to show for a prevalent Lipschitz observable $h\colon X \to \R$, 
	\begin{alignat*}{3}   
		\lhdim T^{k-1} (\mu^{p,q}_{\phi_{h,k}, \phi_{h,k}(x)}) &\geq \lhdim T^{k-1} \mu - k \qquad &&\text{for } \mu^{p,q}\text{-almost every } x \in X,\\
	\lhdim T^{k-1} (\nu_{\phi_{h,k}, \phi_{h,k}(x)}) &\geq \lhdim T^{k-1} \mu - k \qquad &&\text{for } \nu\text{-almost every } x \in X
	\end{alignat*}
	for $(p,q) \in \mR$. As
	\[\lhdim T^{k-1} \mu^{p,q} \geq \lhdim T^{k-1} \mu, \qquad  \lhdim T^{k-1} \nu \geq \lhdim T^{k-1} \mu\]
	whenever these measure are non-zero, it suffices to prove, for a prevalent $h$ and $(p,q) \in \mR$.
	\begin{alignat}{3}\label{eq:mu pq lhdim lower}
		\lhdim T^{k-1} (\mu^{p,q}_{\phi_{h,k}, \phi_{h,k}(x)}) &\geq \lhdim T^{k-1} \mu^{p,q} - k &\quad &\text{for } \mu^{p,q}\text{-almost every } x \in X,\\
	\label{eq:nu lhdim lower}
		\lhdim T^{k-1} (\nu_{\phi_{h,k}, \phi_{h,k}(x)}) &\geq \lhdim T^{k-1} \nu - k &\quad &\text{for } \nu\text{-almost every } x \in X.
	\end{alignat}
	As $\nu(\Prep_{k-1}(T)) = 0$, Theorem~\ref{thm:injectivity sections lower full} gives \eqref{eq:nu lhdim lower}. Fix $(p,q) \in \mR$ and let $\ell = p + q \leq k-1$. Then $\mu^{p,q}(\Prep_{\ell - 1}(T)) = 0$, so we can apply Theorem~\ref{thm:injectivity sections lower full} to the measure $\mu^{p,q}$, with $\ell$ instead of $k$. This implies
	\begin{equation}\label{eq:mu pq lhdim lower ell}
		\lhdim T^{\ell-1} (\mu^{p,q}_{\phi_{h,\ell}, \phi_{h,\ell}(x)}) \geq \lhdim T^{\ell-1} \mu^{p,q} - \ell \quad \text{for } \mu^{p,q}\text{-almost every } x \in X
	\end{equation}
	for a prevalent $h$. Since for $x, y \in X_{p,q}$ we have $\|\phi_{h,\ell}(x) - \phi_{h,\ell}(y)\|_\infty = \|\phi_{h,k}(x) - \phi_{h,k}(y)\|_\infty$, we see from \eqref{eq:cond measure limit} that 
	\begin{equation}\label{eq:periodic cond measures equal} \mu^{p,q}_{\phi_{h,\ell},\phi_{h,\ell}(x)} = \mu^{p,q}_{\phi_{h,k}, \phi_{h,k}(x)} \quad \text{for } \mu^{p,q}\text{-almost every }x \in X.
	\end{equation}
	As $T$ is Lipschitz and $\ell < k$, we have
	\begin{equation}\label{eq:mu pq dim iterate}
		\lhdim T^{\ell-1} \mu^{p,q} - \ell \geq \lhdim T^{k-1} \mu^{p,q} - k.
	\end{equation}
	Note that there exists $0 \leq i \leq \ell - 1$ such that $T^{k-1} x = T^{i} x$ for every $x \in X_{p,q}$. Indeed, let $a \in \N \cup \{0\}$ be the maximal number such that $k - 1 - aq \geq p+q$. Then $i =k - 1 - (a+1)q$ satisfies $0 \leq p \leq i \leq p+q-1 = \ell - 1$ and $T^{k-1} x = T^{k - 1 - (a+1)q} x = T^{i} x$ for every $x \in X_{p,q}$. Therefore,
	\[ \lhdim T^{k-1} (\mu^{p,q}_{\phi_{h,k}, \phi_{h,k}(x)}) = \lhdim T^{i} (\mu^{p,q}_{\phi_{h,k},\phi_{h,k}(x)}) \geq \lhdim T^{\ell-1} (\mu^{p,q}_{\phi_{h,k}, \phi_{h,k}(x)}).
	\]
	Combining this with \eqref{eq:mu pq lhdim lower ell}, \eqref{eq:periodic cond measures equal} and \eqref{eq:mu pq dim iterate} yields \eqref{eq:mu pq lhdim lower} and finishes the proof of assertion~\ref{it:injectivity sections lhdim} of Theorem~\ref{thm:injectivity sections}.

	To prove assertion~\ref{it:injectivity sections uhdim}, note that for every $\eps > 0$ there exists a set Borel $E \subset X$ of positive $\mu$-measure such that $\lhdim T^{k-1} (\mu|_E) \geq \hdim T^{k-1} \mu - \eps$ (this can be easily seen from the definitions of the upper and lower Hausdorff dimension of measures via local dimensions, see Subsection~\ref{subsec:dim}). The assertion is then proved by applying the already established point \ref{it:injectivity sections lhdim}  of Theorem \ref{thm:injectivity sections} to the measure $\tilde \mu = \frac{1}{\mu(E)}\mu|_E$ and noting that $T^{k-1} (\tilde \mu_{h, \phi_h(x)})  \ll T^{k-1} (\mu_{h,\phi_h(x)})$, so $\lhdim T^{k-1} (\tilde \mu_{h, \phi_h(x)}) \leq \hdim T^{k-1} (\tilde \mu_{h, \phi_h(x)}) \leq \hdim  T^{k-1} (\mu_{h,\phi_h(x)})$ for $\mu$-almost every $x \in E$.

\end{proof}

\subsection{Proof of Theorem~\ref{thm:predictability sections}}\label{subsec:proof predictability sections}

First, we prove the following.

\begin{thm}\label{thm:predictability sections lower full}
Let $X \subset\R^N$, $N \in \N$, be a compact set, let $\mu$ be a Borel probability measure on $X$ and let $T\colon X \to X$ be a Lipschitz map. Fix $k \in \N$ and assume $\mu (\Prep_k(T)) = 0$. Fix $d \geq 2k + 1$ and let $\{h_1, \ldots , h_m\}$ be the set of all monomials of $N$ variables of degree at most $d$. Let $h \colon X \to \R$ be a Lipschitz observable. Then for Lebesgue-almost every $\alpha \in \R^m$,
	\[ \lhdim \, \phi_\alpha \circ T  (\mu_{\alpha, \phi_\alpha(x)}) \geq \min\{ 1, \lhdim T^{k} \mu - k\} \quad \text{for } \mu\text{-almost every } x \in X,\]
	where $\phi_\alpha$ is the $k$-delay coordinate map corresponding to $h_\alpha$.
\end{thm}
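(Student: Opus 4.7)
The strategy is to parallel the proof of Theorem~\ref{thm:injectivity sections lower full}, with the essential difference that the energy-integral denominator $\|\phi_\alpha(Ty) - \phi_\alpha(Tw)\|^s$ now genuinely depends on $\alpha$, so cannot simply be pulled out of the $\alpha$-integral as in the injectivity case. First I fix $0 < s < \min\{1, \lhdim T^k \mu - k\}$ and $\eta > 0$ with $s + k < \lhdim T^k \mu - \eta$, and apply Proposition~\ref{prop:decomp} with $\ell = k$ to obtain a countable collection $\mF$ covering $\mu$-a.e.\ of $X$ on whose members both the correlation-dimension bound $\cdim T^k(\mu|_F) \ge \lhdim T^k \mu - \eta$ and the orbit-separation condition for times $0, 1, \ldots, k$ hold. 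Fixing $F \in \mF$ and setting $\nu = \mu|_F / \mu(F)$, Lemma~\ref{lem:restricted cond meas} reduces the assertion to one about $\nu_{\alpha, \phi_\alpha(x)}$. Since $\lhdim T^{k-1}\nu \ge \lhdim T^k \mu > k$ and $\nu(\Prep_{k-1}(T)) \le \nu(\Prep_k(T)) = 0$, Theorem~\ref{thm:absolute continuity full} gives $\phi_\alpha \nu \ll \mH^k$ for Lebesgue-a.e.\ $\alpha$; Lemma~\ref{lem: geometric to conditional} then allows me to work with the geometric slices $\nu^G_{\alpha, z}$, and combined with Lemma~\ref{lem:corr to Hausdorff} the task reduces to verifying
\[ I_1 := \int_{B_m(0,1)} \int_{\R^k} \mE_s\bigl(\phi_\alpha \circ T(\nu^G_{\alpha, z})\bigr) \, d\mH^k(z) \, d\alpha < \infty. \]

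Next I repeat the manipulations from the injectivity proof --- apply \eqref{eq:muG liminf} with Fatou to one slice factor, use the second assertion of Lemma~\ref{lem: geometric to conditional} to collapse the $z$-integral, and swap $d\alpha$ inside by Fubini --- to obtain
\[ I_1 \le \liminf_{\delta \to 0} \frac{C_1}{\delta^k} \int_F \int_F J_\delta(y, w) \, d\nu(y) \, d\nu(w), \qquad J_\delta(y, w) = \int_{B_m(0,1)} \frac{\mathds{1}_{\{\|\phi_\alpha(y) - \phi_\alpha(w)\| \le \delta\}}}{\|\phi_\alpha(Ty) - \phi_\alpha(Tw)\|^s} \, d\alpha. \]

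The heart of the proof is a pointwise bound $J_\delta(y, w)/\delta^k \le C/\|T^k y - T^k w\|^{s+k}$ uniform in small $\delta$ and valid for $\nu \otimes \nu$-a.e.\ $(y, w)$. I first replace the denominator by its smallest component using $\|\phi_\alpha(Ty) - \phi_\alpha(Tw)\| \ge |h_\alpha(T^k y) - h_\alpha(T^k w)|$, then decompose $\alpha = \alpha_1 + \alpha_2$ with $\alpha_1 \in K := \Ker D_{y, w}$ and $\alpha_2 \in K^\perp$ and use Fubini. The indicator depends only on $\alpha_2$, while $h_\alpha(T^k y) - h_\alpha(T^k w)$ is linear in $\alpha_1$ with gradient $P_K v_{y, w}$, where $v_{y,w}$ denotes the row of differences of the polynomial probe set evaluated at $T^k y$ and $T^k w$. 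The key geometric identification $\|P_K v_{y, w}\| = \sigma_1(D_{Ty, Tw}|_K)$ combined with Proposition~\ref{prop:singular value on orbit kernel} forces $\|P_K v_{y, w}\| \ge C\|T^k y - T^k w\|$, so Lemma~\ref{lem:energy_int_ineq} with $p = 1$ (legal because $s < 1$) bounds the $\alpha_1$-integral by $C/\|T^k y - T^k w\|^s$. The remaining $\alpha_2$-integral is a pure volume of the constraint set, and Lemma~\ref{lem: key_ineq_inter} combined with Proposition~\ref{prop:singular value on orbit} bounds it by $C\delta^k/\|T^{k-1}y - T^{k-1}w\|^k$; the Lipschitz inequality $\|T^{k-1}y - T^{k-1}w\| \ge \Lip(T)^{-1}\|T^k y - T^k w\|$ then converts this into $CL^k\delta^k/\|T^k y - T^k w\|^k$. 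Multiplying the two estimates produces exactly the target bound. The full-rank of $D_{y, w}$ and the nondegeneracies $T^i y \ne T^i w$ for $i = 0, \ldots, k$ hold $\nu \otimes \nu$-a.e., because $\lhdim T^k \mu > 0$ implies each $T^i \nu$ is atomless via the inclusion $(T^i)^{-1}(c) \subset (T^k)^{-1}(T^{k-i}c)$.

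Substituting the pointwise bound into the $\nu \otimes \nu$-integral yields $I_1 \le C\, \mE_{s+k}(T^k \nu)$, finite since $s + k < \cdim T^k\nu$ by the choice of $\eta$ and Proposition~\ref{prop:decomp}\ref{it:cdim>hdim}. Letting $s$ approach $\min\{1, \lhdim T^k \mu - k\}$ and summing over the countable collection $\mF$ finishes the argument. The principal technical nuisance I anticipate is handling the intermediate range $\delta > \sigma_k(D_{y, w})$, where Lemma~\ref{lem: key_ineq_inter} no longer gives a sharp $\alpha_2$-volume bound; there one must estimate the $\alpha_2$-integral trivially by the unit-ball volume and use $\delta \ge \sigma_k \ge C\Lip(T)^{-1}\|T^k y - T^k w\|$ to recover the same final bound for $J_\delta/\delta^k$. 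Once this is dispatched, the remainder is bookkeeping --- measurability and justification of Fubini/Fatou --- that runs in parallel with the injectivity case.
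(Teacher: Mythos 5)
Your proposal is correct and follows essentially the same route as the paper's proof: the same decomposition via Proposition~\ref{prop:decomp} with $\ell = k$, the same reduction through Lemma~\ref{lem:restricted cond meas}, Theorem~\ref{thm:absolute continuity full}, Lemma~\ref{lem: geometric to conditional} and Lemma~\ref{lem:corr to Hausdorff}, and the same kernel/co-kernel Fubini split estimated via Propositions~\ref{prop:singular value on orbit}--\ref{prop:singular value on orbit kernel} together with Lemmas~\ref{lem: key_ineq_inter} and~\ref{lem:energy_int_ineq}. Your scalar reduction is sound --- since the first $k-1$ rows of $D_{Ty,Tw}$ are rows of $D_{y,w}$ and hence annihilate $\Ker D_{y,w}$, the norm of the projection of the last row onto $\Ker D_{y,w}$ does equal $\sigma_1(D_{Ty,Tw}|_{\Ker D_{y,w}})$, so this is merely a repackaging of the paper's application of Lemma~\ref{lem:energy_int_ineq} to $D_{Tx,Ty}|_{\Ker D_{x,y}}$ --- and the anticipated difficulty for $\delta > \sigma_k(D_{y,w})$ does not arise, as Lemma~\ref{lem: key_ineq_inter} is valid for all $\delta,\rho>0$.
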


\begin{proof} Again, it is sufficient to prove the assertion for Lebesgue-almost every $\alpha \in B_m(0,1)$. The first part of the proof is similar to the one of Theorem~\ref{thm:injectivity sections lower full}. We can assume $\lhdim T^k \mu > k$. Choose $\eta,s > 0$ such that 
\[
\eta < \lhdim T^k \mu - k, \qquad k < s < k + \min\{1, \lhdim T^k \mu - \eta - k\}.
\]
Consider the collection $\mF$ from Proposition~\ref{prop:decomp} corresponding to $\ell = k$ and $\eta$, and fix $F \in \mF$. We will show
\begin{equation}\label{eq:aux1}
\lhdim \, \phi_h \circ T (\mu_{\alpha, \phi_\alpha(x)} )\geq s - k \quad \text{for } \mu\text{-almost every } x \in F \text{ and almost every } \alpha \in B_m(0,1).
\end{equation}
As $\eta, s$ can be chosen such that $s-k$ is arbitrarily close to $\min\{ 1, \lhdim T^{k} \mu - k\}$ and $\mu \left(\bigcup_{F \in \mF} F\right) = 1$, showing \eqref{eq:aux1} will prove the theorem. 

Let 
\[
\nu = \frac{1}{\mu(F)}\mu|_F.
\]
As by Lemma~\ref{lem:restricted cond meas}, for $\nu$-almost every $x \in F$, the measures $\nu_{\alpha, \phi_\alpha(x)}$ and $\mu_{\alpha, \phi_\alpha(x)}|_F$ are equal up to a multiplication by a positive constant, to show \eqref{eq:aux1} it is sufficient to prove
\begin{equation}\label{eq: nu lhdim bound} \lhdim \, \phi_h \circ T (\nu_{\alpha, \phi_\alpha(x)}) \geq s - k \quad \text{for } \nu\text{-almost every } x \in F \text{ and almost every } \alpha \in B_m(0,1).
\end{equation}
In the remainder of the proof we show \eqref{eq: nu lhdim bound}. As $\lhdim T^{k-1} \nu \geq \lhdim T^{k} \nu \geq \lhdim T^k \mu  > k$ and $\nu(\Prep_k(T)) \leq \frac{1}{\mu(F)}\mu (\Prep_k(T)) = 0$, Theorem~\ref{thm:absolute continuity full} implies $\phi_\alpha \nu \ll \mH^k$ for Lebesgue-almost every $\alpha \in B_m(0,1)$. Consequently, by Lemma~\ref{lem: geometric to conditional}, the assertion \eqref{eq: nu lhdim bound} will follow from
	\begin{equation} \label{eq: nu lhdim bound2}\lhdim \, \phi_h \circ T   (\nu^G_{\alpha,z}) \geq s - k \quad \text{for } \mH^k\text{-almost every } z \in \R^k \text{ and almost every } \alpha \in B_m(0,1).
	\end{equation}
	By Lemma~\ref{lem:corr to Hausdorff}, to show \eqref{eq: nu lhdim bound2}  it is sufficient to prove
	\begin{equation}\label{eq:predict slices main integral}
		I = \int_{B_m(0,1)} \int_{\R^k} \mE_{s-k}(\phi_\alpha \circ T (\nu^G_{\alpha,z}))  d\mH^k(z)d\alpha < \infty.
	\end{equation}
	Now we verify \eqref{eq:predict slices main integral}. As the function $x \mapsto \|\phi_\alpha(Tx) - \phi_\alpha(Ty)\|^{k-s} \in [0, +\infty]$ is lower semi-continuous, \eqref{eq:muG liminf} and Fatou's lemma imply
	\begin{align*}
		I &= \int_{B_m(0,1)} \int_{\R^k} \int_F  \int_F  \frac{d\nu^G_{\alpha, z}(x) d\nu^G_{\alpha, z}(y) d\mH^k(z)d\alpha}{\|\phi_\alpha(Tx) - \phi_\alpha(Ty)\|^{s-k}} \\
		&\leq \liminf_{\delta \to 0} \frac{C_1}{\delta^k} \int_{B_m(0,1)} \int_{\R^k} \int_F  \int_{\phi_\alpha^{-1}(B(z, \delta))}  \frac{d\nu(x)d\nu^G_{\alpha, z}(y) d\mH^k(z)d\alpha}{ \|\phi_\alpha(Tx) - \phi_\alpha(Ty)\|^{s-k}}
	\end{align*}
	for some $C_1>0$. By Lemma~\ref{lem: geometric to conditional},
		\[ \int_{\R^k} \int_{F} \frac{d\nu^G_{\alpha,z}(y)d\mH^k(z)}{ \|\phi_\alpha(Tx) - \phi_\alpha(Ty) \|^{s-k}} = \int_{F} \frac{d\nu(y)}{\|\phi_\alpha(Tx) - \phi_\alpha(Ty) \|^{s-k} },\]
	so by Tonelli's theorem,
	\begin{align*} 	I & \leq \liminf_{\delta \to 0} \frac{C_1}{\delta^k} \int_{B_m(0,1)} \int_{F}  \int_{\phi_\alpha^{-1}(B(\phi_\alpha(y), \delta))}   \frac{d\nu(x) d \nu(y) d\alpha}{\|\phi_\alpha(Tx) - \phi_\alpha(Ty)\|^{s-k}} \\
		& =  \liminf_{\delta \to 0} \frac{C_1}{\delta^k} \int_{B_m(0,1)} \int_{F}  \int_F \frac{\mathds{1}_{\phi_\alpha^{-1}(B(\phi_\alpha(y), \delta))}(x)}{\|\phi_\alpha(Tx) - \phi_\alpha(Ty)\|^{s-k}}\: d\nu(x) d \nu(y) d\alpha \\
		& = \liminf_{\delta \to 0} \frac{C_1}{\delta^k}  \int_{F}  \int_F J_{x,y}\: d\nu(x) d \nu(y),
	\end{align*}
	where
	\[   
	J_{x,y} = \int_{\{ \alpha \in B_m(0,1) : \|\phi_\alpha(x) - \phi_\alpha(y)\| \leq \delta \}}   \frac{d\alpha}{\|\phi_\alpha(Tx) - \phi_\alpha(Ty)\|^{s-k}}, \qquad x,y \in F.
	\]
	To estimate $J_{x,y}$ we proceed as follows. First we note that as $\dim_c T^{k} \nu > 0$, the measure $T^{k} \nu$ has no atoms. Therefore, within the integral $I$ it suffices to consider only points $x,y \in F$ with $T^k x \neq T^k y$. Consequently, estimating $J_{x,y}$ we can assume
	\[   
	x,y \in F, \qquad T^k x \neq T^k y.
	\]
	By Proposition~\ref{prop:decomp}.\ref{it:orbit separation}, we can use Proposition~\ref{prop:singular value on orbit} to obtain
	\begin{equation}\label{eq:sigma k lower bound}
		\sigma_k(D_{x,y}) \geq \frac{C_2(\eps(F))^{2k - 2}}{\Lip(T)} \|T^k x - T^k y\| >0
	\end{equation}
	for some $C_2 > 0$. In particular, this implies that $\rank D_{x,y} = k$ and $\Ker D_{x,y}$ is an $(m - k)$-dimensional subspace of $\R^m$. By \eqref{eq:matrix_form}, we can write 
	\begin{align*}
		\{ \alpha \in B_m(0,1) : \|\phi_\alpha(x) - \phi_\alpha(y)\| \leq \delta \} & = 	\{ \alpha \in B_m(0,1) :  \|D_{x,y}\alpha + w_{x,y}\| \leq \delta \} \\
		& \subset \left\{ \beta + \gamma : \beta \in  B_m(0,1) \cap \Ker D_{x,y},\, \gamma \in G_{x,y}(\delta) \right\},
	\end{align*}
	where
	\[ G_{x,y}(\delta) =  B_m(0,1) \cap (\Ker D_{x,y})^\perp \cap D_{x,y}^{-1}\left( B_k(-w_{x,y}, \delta) \right) \subset \R^m.\]
	Identifying $(\Ker D_{x,y})^\perp$ with $\R^k$, we can use Lemma~\ref{lem: key_ineq_inter} to obtain
	\begin{equation}\label{eq: G measure bound}
		\mH^k(G_{x,y}(\delta)) \leq C_3\left(\frac{\delta}{\sigma_k(D_{x,y})}\right)^k
	\end{equation}
	for some $C_3 > 0$. 
	
	Now we estimate $J_{x,y}$. By \eqref{eq:matrix_form} and Fubini's theorem,
	\begin{equation}\label{eq:preimage integral Fubini}
	J_{x,y} \leq C_4 \int_{G_{x,y}(\delta)} \int_{ \Ker D_{x,y} \cap B_{m}(0,1)} \frac{d\mH^{m-k}(\beta) d\mH^k(\gamma)}{\|D_{Tx,Ty} \beta + D_{Tx, Ty}\gamma + w_{Tx, Ty}\|^{s-k}},
	\end{equation}
where we use the fact that the $m$-dimensional Lebesgue measure satisfies $\Leb = C_4\mH^{m-k} \otimes \mH^k$ for some $C_4 > 0$.
		Fix $\gamma \in G_{x,y}(\delta)$. By Proposition~\ref{prop:decomp}.\ref{it:orbit separation}, we can apply Proposition~\ref{prop:singular value on orbit kernel} to obtain $$\sigma_1(D_{Tx,Ty}|_{\Ker D_{x,y}}) \geq C_5\|T^k x - T^k y\| > 0$$ for some $C_5 > 0$. Consequently,  Lemma~\ref{lem:energy_int_ineq} applied for $p=1$ and $b = D_{Tx, Ty}\gamma + w_{Tx, Ty}$ gives
	\[ \int_{ \Ker D_{x,y} \cap B_{m}(0,1)}  \frac{d\mH^{m-k}(\beta)}{\|D_{Tx,Ty} \beta + D_{Tx, Ty}\gamma + w_{Tx, Ty}\|^{s-k}} \leq \frac{C_6}{\|T^k x - T^k y\|^{s-k}} \]
	for some $C_6 > 0$ (we use here $0 < s - k < 1$). Combining \eqref{eq:sigma k lower bound}, \eqref{eq: G measure bound} and \eqref{eq:preimage integral Fubini} yields
	\[
		J_{x,y}  \leq C_7 \frac{\delta^k}{(\sigma_k(D_{x,y}))^k\|T^k x - T^k y\|^{s-k}} \leq C_8 \frac{\delta^k}{\|T^k x - T^k y\|^s}
	\]
	for some $C_7, C_8 > 0$. Finally, this gives
	\[ I \leq C_1 C_8 \int_F \int_F \frac{d\nu(x) d\nu(y)}{ \|T^k x - T^k y\|^s} = C_1 C_8 \mE_{s}(\nu) < \infty, \]
	as $\cdim \nu \geq \lhdim T^k \mu - \eta > s$ by Proposition~\ref{prop:decomp}.\ref{it:cdim>hdim}. This establishes \eqref{eq:predict slices main integral} and finishes the proof of the theorem.
\end{proof}

\begin{proof}[Proof of Theorem~\rm\ref{thm:predictability sections}]
Assertion~\ref{it:predictability sections lhdim} is a direct consequence of Theorem~\ref{thm:predictability sections lower full}, while assertion~\ref{it:predictability sections uhdim} can be obtained in the same way as Theorem~\ref{thm:injectivity sections}.\ref{it:injectivity sections uhdim} was obtained from Theorem~\ref{thm:injectivity sections}.\ref{it:injectivity sections lhdim} within the proof of Theorem~\ref{thm:injectivity sections}.
\end{proof}

\section{A counterexample -- proof of Theorem~\ref{thm:counterexample}} \label{sec:counterexample}

In this section we prove Theorem~\ref{thm:counterexample}. First, recall the definition of the Smale--Williams solenoid attractor (see e.g.~\cite{Robinson99}). Let $Y$ be a solid torus (a smooth manifold with boundary) defined by $$Y=\mathbb S^1\times \D,$$ where $\mathbb S^1 = \R/(2\pi\Z)$ and $\D$ is the unit disc in the complex plane $\mathbb C$. Let 
$\tilde T \colon Y \rightarrow Y$ be given by 

$$\tilde T(t,z) = \left(2t \mod 2\pi, \tfrac{1}{4}z + \tfrac{1}{2}e^{it}\right).$$
The map $\tilde T$ provides a  $C^\infty$-embedding of $Y$ into itself. Let
\[
\Lambda = \bigcap_{n=0}^\infty \tilde T^n (Y).
\]
The set $\Lambda$ is called the \emph{Smale--Williams solenoid}. It is an invariant compact hyperbolic attractor admitting a natural (SRB) measure $\nu$, in the sense of Definition \ref{defn:nat_measure}. It is known (see \cite{simon1997hausdorff, RamsSimon03}) that
\[
\idim(\nu) = \hdim \nu = \hdim \Lambda = \bdim \Lambda = \frac 3 2.
\]

\begin{thm}\label{thm:axiom_A_extension}
The dynamical system $(Y,\tilde T^2)$ can be $C^\infty$-dynamically embedded into $(M, T)$, where $M$ is a $3$-dimensional manifold and $T\colon M \to M$ is a $C^\infty$-axiom A diffeomorphism.    
\end{thm}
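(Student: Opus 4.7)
The plan is to take $M = S^3$ with its standard genus-one Heegaard splitting $S^3 = Y \cup Y'$, where $Y$ and $Y'$ are solid tori glued along the common boundary torus $\partial Y = \partial Y'$, and to construct $T$ as a $C^\infty$-extension of $\tilde T^2$ to all of $S^3$. The dynamical embedding $i \colon Y \hookrightarrow M$ will simply be the inclusion of the standard solid torus. Because $\tilde T^2(Y) \subsetneq \Int(Y)$, the attractor $\Lambda = \bigcap_{n \geq 0} \tilde T^{2n}(Y)$ sits strictly inside $Y$, and once $T$ is constructed to agree with $\tilde T^2$ on a neighborhood of $\Lambda$, the embedding will automatically be dynamical.

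The extension is carried out in two stages. First, I would modify $\tilde T^2$ by a smooth isotopy supported in a collar of $\partial Y$, away from $\Lambda$, to produce a diffeomorphism $T_0 \colon Y \to T_0(Y) \subsetneq Y$ equal to $\tilde T^2$ on a neighborhood of $\Lambda$ and with prescribed boundary behavior on $\partial Y$. Second, on $Y'$ I would define a smooth map $T_1$ whose inverse is a dual solenoid embedding $T_1^{-1}(Y') \subsetneq \Int(Y')$, wrapping $Y'$ around its own core, so as to produce a hyperbolic solenoidal repeller $\Lambda' = \bigcap_{n \geq 0} T^{-n}(Y')$. The pair $T_0, T_1$ is then glued to a single $C^\infty$-diffeomorphism $T$ of $S^3$ by smoothly interpolating in a collar of the common boundary torus. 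Passing to the second iterate $\tilde T^2$ (degree-four wrapping on the core of $Y$) is what allows the wrapping degrees on $Y$ and on $Y'$ to be chosen with compatible parities so that an orientation-preserving smooth extension of prescribed form exists.

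To verify the axiom A property I would argue as follows. The non-wandering set of $T$ equals $\Lambda \cup \Lambda'$: any point not in this union escapes a small neighborhood of $\Lambda \cup \Lambda'$ under forward or backward iteration, because the nested filtrations $Y \supset T(Y) \supset T^2(Y) \supset \cdots$ and $Y' \supset T^{-1}(Y') \supset T^{-2}(Y') \supset \cdots$ force asymptotic behavior toward $\Lambda$ or $\Lambda'$. The reverse inclusion follows from density of periodic points on each solenoid (classical, since each solenoid is an inverse limit of circle-doublings whose periodic points are dense). Hyperbolicity of $\Lambda$ (one-dimensional expanding along the core direction, two-dimensional contracting in the transverse disc direction) follows from the explicit derivative of $\tilde T^2$, and hyperbolicity of $\Lambda'$ follows dually from the derivative of $T^{-1}|_{Y'}$.

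The main obstacle I expect is performing the smooth interpolation in the collar without introducing spurious non-wandering points and while preserving the hyperbolic splittings on $\Lambda \cup \Lambda'$. This is to be handled by a careful filtration argument in the style of Smale's construction of axiom A systems from handle attachments, combined with the topological compatibility of the two solenoidal wrappings with the Heegaard gluing, which is precisely where the passage from $\tilde T$ to $\tilde T^2$ enters.
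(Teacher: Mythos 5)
Your plan fails before the interpolation step you identify as the main obstacle: the requirement that $T$ agree with $\tilde T^2$ on a neighbourhood of $\Lambda$, with $Y$ the standard Heegaard solid torus of $S^3$, is already impossible. Indeed, suppose such a diffeomorphism $T\colon S^3\to S^3$ existed, with $T=\tilde T^2$ on an open set $N\supset\Lambda$. Since the tubes $\tilde T^{2n}(Y)$ decrease to $\Lambda$, there is $m$ with $V:=\tilde T^{2m}(Y)\subset N$, and then $T(V)=\tilde T^{2(m+1)}(Y)$; as $T$ is a global diffeomorphism, the exteriors $S^3\setminus \Int\tilde T^{2m}(Y)$ and $S^3\setminus \Int\tilde T^{2(m+1)}(Y)$ would be homeomorphic. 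But $\tilde T^{2n}(Y)$ is a tubular neighbourhood of the iterated cable $K_n=\tilde T^{2n}(\mathrm{core})$, and these knots change type with $n$: for $\tilde T(t,z)=(2t,\tfrac14 z+\tfrac12 e^{it})$ the image core $\tilde T(\mathrm{core})$ is a $(2,1)$-curve (an unknot), but the framing that $\tilde T$ induces on its image tube differs from the Seifert framing of that curve by a nonzero integer $c$ (the disc factor is scaled without rotation while the centre $\tfrac12 e^{it}$ turns once), so $K_1=\tilde T^2(\mathrm{core})$ is the $(2,2c+1)$-torus knot, which is nontrivial; and each $K_{n+1}$ is a satellite of $K_n$ with winding number $4$, whence $\deg\Delta_{K_{n+1}}\ge 4\deg\Delta_{K_n}>\deg\Delta_{K_n}>0$ for $n\ge 1$. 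Distinct Alexander polynomials give non-homeomorphic exteriors, a contradiction. So no diffeomorphism of $S^3$ extends the germ of $\tilde T^2$ near $\Lambda$, let alone an axiom~A one; in particular your ``compatible parities'' remark about passing to $\tilde T^2$ is not the relevant compatibility condition. The true condition is knot-theoretic: the exterior of $T(Y)$ must again be a solid torus and the boundary map must carry the meridian of $Y'$ (the longitude of $Y$) to its meridian, a framing condition that the standard solenoid map violates, and that propagates through all iterates.

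This is precisely why the paper does not work on $S^3$: which closed orientable $3$-manifolds carry a diffeomorphism whose nonwandering set is a Smale solenoid attractor--repeller pair is a delicate classification problem solved in \cite{jiang2004}, and the paper simply takes $M$ to be the lens space $L(3,7)$, where the Claim on p.~4377 of \cite{jiang2004} provides a $C^\infty$ axiom~A diffeomorphism whose attractor is $C^\infty$-conjugate to $(\Lambda,\tilde T^2)$ and whose repeller is a $4$-adic solenoid. If you wish to avoid that citation you must either reproduce such a construction on an admissible lens space, or modify the solenoid map near $\partial Y$ (re-framing it so that all iterated cores stay unknotted) before extending; but in the latter case the embedded system is no longer $(Y,\tilde T^2)$ as the theorem requires, and the later arguments of the paper that use the explicit form of $\tilde T$ and the dimension of its natural measure would have to be reworked. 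Your remaining steps (density of periodic points, hyperbolicity of the two basic sets, the filtration argument for $\Omega(T)=\Lambda\cup\Lambda'$) are fine in spirit, but they presuppose an extension that does not exist in the setting you chose.
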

\begin{proof}
	Let $M=L(3,7)$, the \emph{$(3,7)$-lens space} (this smooth $3$-manifold is defined in e.g.~\cite[Example~2.43]{hatcher2002algebraic}). According to \cite[Claim on p.~4377]{jiang2004}, $M$ admits a $C^\infty$-diffeomorphism $T\colon M \to M$ such that $\Omega (T)=A\cup R$, where $A$ is a $4$-adic solenoid attractor ($C^\infty$-conjugated to $(\Lambda,\tilde T^2)$) and $R$ is a  $4$-adic solenoid repeller.\footnote{Note that the definition of an attractor presented in \cite[p.~4373]{jiang2004} is compatible with Definition~\ref{defn:nat_measure}.} By \cite[p.~4375]{jiang2004}, the map $T$ is an axiom~A diffeomorphism see e.g.~\cite[Chapter 3]{BR08} for the definition).
\end{proof}

The proof of Theorem~\rm\ref{thm:counterexample} is split into three propositions.
Consider the manifold $M$ and the diffeomorphism $T\colon M \to M$ from Theorem~\ref{thm:axiom_A_extension}. Denote by $\psi$ a $C^\infty$-embedding of $(Y,\tilde T^2)$ into $(M, T)$. Let 
\[
X = \psi(\Lambda), \qquad \mu = \psi\nu.
\]
Then $X$ is a $T$-invariant compact hyperbolic attractor with natural measure $\mu$ and
\[
\idim(\mu) = \hdim \mu = \hdim X = \bdim X = \frac 3 2.
\]
For the system $(X,T)$ we consider $2$-delay coordinate maps $\phi_h\colon X \to \R^2$ corresponding to Lipschitz observables $h\colon X \to \R$. 
For simplicity, in the two following propositions we use the notation which identifies $(X, T)$ with $(\Lambda,\tilde T^2)$.

\begin{prop}\label{prop:h_0 predict}
	The observable $h_0\colon X \to \R$ given by $h_0(t,z)=\cos t$ is $k$-pre\-dict\-able for every $k \in \N$.
\end{prop}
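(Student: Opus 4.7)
The plan is to exhibit an explicit continuous prediction map, which will immediately give $k$-predictability in the strong deterministic sense of Definition \ref{defn:predict-embed}. Since $T = \tilde T^2$ and $\tilde T$ doubles the angle coordinate, $T$ multiplies the angle coordinate by $4$. Hence for every $(t,z) \in X$ and $j \ge 0$ one has $h_0(T^j(t,z)) = \cos(4^j t)$, so the $k$-delay coordinate map corresponding to $h_0$ takes the form
\[
\phi_{h_0}(t,z) \;=\; \bigl(\cos t,\; \cos(4t),\; \cos(16t),\; \ldots,\; \cos(4^{k-1}t)\bigr).
\]

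The crucial observation is the Chebyshev identity $\cos(4\theta) = T_4(\cos\theta)$ for every $\theta \in \R$, where $T_4(u) = 8u^4 - 8u^2 + 1$ is the fourth Chebyshev polynomial of the first kind. Applying this with $\theta = 4^{k-1}t$ yields $\cos(4^k t) = T_4(\cos(4^{k-1} t))$, so the last entry of $\phi_{h_0}(T(t,z))$ is a polynomial function of the last entry of $\phi_{h_0}(t,z)$. Since the first $k-1$ entries of $\phi_{h_0}(T(t,z))$ agree with the last $k-1$ entries of $\phi_{h_0}(t,z)$, the map
\[
S_{h_0}\colon \R^k \to \R^k, \qquad S_{h_0}(a_0, a_1, \ldots, a_{k-1}) = \bigl(a_1, a_2, \ldots, a_{k-1},\, T_4(a_{k-1})\bigr)
\]
is well-defined and continuous (in fact polynomial), and it satisfies $S_{h_0}\circ \phi_{h_0} = \phi_{h_0}\circ T$ on all of $X$. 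Restricting $S_{h_0}$ to $\phi_{h_0}(X)$ gives the desired prediction map making diagram \eqref{eq:comm_diagram} commute, so $h_0$ is $k$-predictable for every $k \in \N$.

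There is no real obstacle here: the only point worth emphasising is that predictability holds everywhere on $X$ (not merely almost surely), because the Chebyshev identity is an algebraic relation that depends solely on the first coordinate $t$ and is insensitive to the fibre coordinate $z \in \D$ on which the solenoid dynamics $\tilde T^2$ acts contractively. Consequently, the argument is independent of the (fractal) structure of $X$ in the disc direction and of the measure $\mu$.
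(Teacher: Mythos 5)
Your proof is correct and follows essentially the same route as the paper: the paper notes that $\cos 4t = 2(2\cos^2 t-1)^2-1$, i.e.\ exactly your Chebyshev identity $\cos 4t = T_4(\cos t)$ with $T_4(u)=8u^4-8u^2+1$, and deduces $1$-predictability (hence $k$-predictability); you simply make the resulting shift-plus-polynomial prediction map explicit for general $k$.
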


\begin{proof}
It suffices to prove that $h_0$ is $1$-predictable, i.e. $h_0(t,z)$ determines $h_0(T(t,z))$ uniquely. This follows from the trigonometric identity $\cos 2t = 2\cos^2 t - 1$, which shows that $h_0(t,z) = \cos t$ determines uniquely $\cos 2t$ and hence also $\cos 4t = h_0(T(t,z))$ (or more explicitly: $h_0(T(t,z)) = 2(2h_0(t,z)^2 - 1)^2 - 1$).
\end{proof}

\begin{prop}\label{prop:h_0 non injective}
For every observable $h \in \Lip(X, \R)$, the corresponding $k$-delay coordinate map $\phi_h$ is not injective on $X$ for $k=1,2$.
\end{prop}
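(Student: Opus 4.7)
The plan is to reduce both cases to purely topological obstructions for the Smale--Williams solenoid $\Lambda$, which is homeomorphic to $X$ via $\psi$. Since $X$ is compact and $\R^k$ is Hausdorff, any continuous injection $\phi_h\colon X \to \R^k$ is automatically a homeomorphism onto its image (a continuous bijection from a compact space to a Hausdorff space being a homeomorphism). Hence injectivity of $\phi_h$ is equivalent to the existence of a topological embedding of $\Lambda$ into $\R^k$, and it suffices to exhibit, for $k=1$ and $k=2$, an obstruction to such an embedding.

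For $k=1$ I would use that $\Lambda$ is connected (being the decreasing intersection of the connected compact sets $\tilde T^n(Y)$) but not path-connected, the latter being a standard feature of solenoids: each path-component is a dense injective continuous image of $\R$, and there are uncountably many of them. If $h = \phi_h$ were injective, then $h(X)$ would be a compact connected subset of $\R$, i.e.\ a closed interval $[a,b]$; but $[a,b]$ is path-connected, which would force $X \cong \Lambda$ to be path-connected, a contradiction.

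For $k=2$ the corresponding obstruction is the classical theorem, going back to van Dantzig, that the dyadic solenoid does not embed topologically into the plane. Any continuous injection $\phi_h\colon X \to \R^2$ would furnish such an embedding, which is impossible.

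The main obstacle I anticipate is the $k=2$ step, namely justifying non-planarity of the solenoid at a level appropriate for the paper. If a direct citation is undesired, one route is to combine Alexander duality in $\mathbb{S}^2$ with the fact that the first \v{C}ech cohomology of $\Lambda$ is the group $\Z[1/2]$ of dyadic rationals, which is incompatible with the complement of a compact planar set. Alternatively, one could seek a more hands-on dynamical argument that, within any hypothetical planar embedding, produces two distinct points of $X$ with equal $\phi_h$-image by exploiting the four-to-one action of $\tilde T^2$ on the first coordinate together with the Cantor-set structure of the transverse slices $\Lambda_t = \{(t,z) \in \Lambda\}$.
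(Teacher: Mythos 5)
Your proposal is correct and follows essentially the same route as the paper: since $X$ is compact, an injective continuous $\phi_h$ would be a topological embedding of the solenoid into $\R^k\subset\R^2$, contradicting the classical non-planarity of solenoids, which the paper simply cites (Bing; Hoehn--Oversteegen). Your extra, self-contained path-connectedness argument for $k=1$ and the Alexander duality/\v{C}ech cohomology sketch (with $\check H^1(\Lambda)\cong\Z[1/2]$ not free abelian) are valid but not needed beyond that citation.
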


\begin{proof}
This follows directly from the fact that the solenoid $\Lambda$ does not embed topologically into $\R^2$, see \cite{Bing60,HoehnOversteegen}.
\end{proof}

\begin{prop}\label{prop:h_0 U}
For every $k \in \N$ and $\delta > 0$ there exists $\eps_0> 0$ and an open set $\mathcal{U} \subset \Lip(X, \R)$ containing $h_0$, such that for every $\eps < \eps_0$ and every $h \in \mathcal{U}$,
		$$\{ (t,z) \in X : \sigma_{h,\eps}(\phi_h(t,z)) > \delta \} = \emptyset,$$
		where $\phi_h$ is the $2$-delay coordinate map corresponding to $h$.
\end{prop}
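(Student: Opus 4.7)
My plan is to exploit the fact, established in Proposition~\ref{prop:h_0 predict}, that $h_0$ is deterministically $k$-predictable via an explicit \emph{polynomial} rule, and then transfer the associated exact prediction to all $h$ in a small Lipschitz-neighborhood of $h_0$ by a direct triangle-inequality perturbation argument. Inspecting the proof of Proposition~\ref{prop:h_0 predict}, the one-step rule $u \mapsto 2(2u^2-1)^2 - 1$ is polynomial, so iterating it produces a polynomial (hence continuous) $k$-delay prediction map $S_0 \colon \phi_{h_0}(X) \to \phi_{h_0}(X)$ with $\phi_{h_0} \circ T = S_0 \circ \phi_{h_0}$ on $X$. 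Since $\phi_{h_0}(X)$ is compact, $S_0$ is uniformly continuous there; fix a modulus of continuity $\omega$, so $\omega(r) \to 0$ as $r \to 0^+$.

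For $\eta > 0$ I set $\mathcal{U}_\eta = \{h \in \Lip(X,\R) : \|h - h_0\|_{\Lip(X)} < \eta\}$, an open neighborhood of $h_0$. For $h \in \mathcal{U}_\eta$ one has $\|h - h_0\|_\infty < \eta$, hence $\|\phi_h(x) - \phi_{h_0}(x)\| \le C_0 \eta$ for all $x \in X$, with $C_0 = \sqrt{k}$. Fix $x_0 \in X$, put $y = \phi_h(x_0)$, and let $x$ satisfy $\phi_h(x) \in B(y,\eps)$. Three triangle inequalities yield $\|\phi_{h_0}(x) - \phi_{h_0}(x_0)\| \le \eps + 2 C_0 \eta$, so by the predictability of $h_0$,
\[
\|\phi_{h_0}(Tx) - \phi_{h_0}(Tx_0)\| = \|S_0(\phi_{h_0}(x)) - S_0(\phi_{h_0}(x_0))\| \le \omega(\eps + 2C_0 \eta),
\]
and one further sup-norm comparison of $\phi_h \circ T$ with $\phi_{h_0} \circ T$ gives
\[
\|\phi_h(Tx) - \phi_h(Tx_0)\| \le 2C_0 \eta + \omega(\eps + 2C_0 \eta) =: M(\eta,\eps).
\]
Since $\chi_{h,\eps}(y)$ minimizes $v \mapsto \frac{1}{\mu(A)}\int_A \|\phi_h \circ T - v\|^2 \, d\mu$ over $v \in \R^k$, where $A = \phi_h^{-1}(B(y,\eps))$, choosing $v = \phi_h(Tx_0)$ yields $\sigma_{h,\eps}(y) \le M(\eta,\eps)$ whenever the expression is defined.

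Given $\delta > 0$, I then choose $\eta > 0$ so small that $2C_0 \eta < \delta/2$ and $\omega(4C_0 \eta) < \delta/2$ (possible since $\omega(0^+) = 0$ and $\omega$ is non-decreasing), and set $\eps_0 = 2C_0 \eta$. For every $0 < \eps < \eps_0$ and every $h \in \mathcal{U}_\eta$ one has $\eps + 2C_0\eta < 4C_0\eta$ and therefore $M(\eta,\eps) < \delta$, so $\sigma_{h,\eps}(\phi_h(x_0)) < \delta$ for every $x_0 \in X$ and the set in the statement is empty. The only mild obstacle is the mismatch between the sup-norm control of $\phi_h - \phi_{h_0}$ and the $L^2$-nature of $\sigma_{h,\eps}$, which is neutralized by the variance-versus-squared-radius bound above; once the polynomial (hence uniformly continuous on a compact set) form of $S_0$ has been identified, the rest is elementary.
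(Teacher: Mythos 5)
Your argument is correct and follows essentially the same route as the paper: both proofs exploit the uniform continuity of the prediction map for $h_0$ on the compact set $\phi_{h_0}(X)$ and then transfer the bound to nearby $h$ by sup-norm/triangle-inequality perturbation, concluding that $\sigma_{h,\eps}\le\delta$ everywhere. The only cosmetic differences are that you build the continuous prediction map explicitly from the polynomial rule (where the paper cites \cite[Proposition~1.9]{BGSPredict}) and that you bound $\sigma_{h,\eps}$ via the $L^2$-minimizing property of the mean rather than estimating $\|\chi_{h,\eps}(\phi_h(t,z))-\phi_h(T(t,z))\|$ directly.
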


\begin{proof}
This is an adaptation of the proof of  \cite[Theorem~7.2]{BGSPredict}. By Proposition~\ref{prop:h_0 predict} and \cite[Proposition~1.9]{BGSPredict}, there exists a continuous prediction map $S_{h_0} \colon \phi_{h_0}(X) \to \phi_{h_0}(X)$ satisfying $\phi_{h_0}(T(t,z)) = S_{h_0}(\phi_{h_0}(t,z))$ for every $(t,z) \in X$. Hence, for $\delta > 0$ we can find $0 < \eps_0 < \delta/6$ such that 
\[
\text{if}\quad \|\phi_{h_0}(t,z) - \phi_{h_0}(s,u)\| < 3 \eps_0, \quad \text{then} \quad \|\phi_{h_0}(T(t,z)) - \phi_{h_0}(T(s,u))\| < \frac{\delta}{6} 
\]
for $(t,z), (s,u) \in X$. Let $\eps_1 > 0$ be such that $\| \phi_h - \phi_{h_0} \| < \eps_0$ if $\|h - h_0 \|_{\Lip(X)} < \eps_1$ and let $$\mathcal{U} = \{h \in \Lip(X, \R) : \|h - h_0 \|_{\Lip(X)} < \eps_1\}.$$ With this choice, if $h \in \mathcal{U}$, then for every $\eps < \eps_0$ and $(s,u) \in \phi_{h}^{-1}(B(\phi_h(t,z), \eps))$, we have $\| \phi_{h_0}(t,z) - \phi_{h_0}(s,u) \| \leq \eps + 2 \eps_0 < 3\eps_0$, so $\|\phi_{h_0}(T(t,z)) - \phi_{h_0}(T(s,u))\| \leq \delta / 6$ and hence 
\[
\|\phi_{h}(T(t,z)) - \phi_{h}(T(s,u))\|  < \frac{\delta}{6} +2 \eps_0 < \frac{\delta}{2}.
\]
This together with the definitions of $\chi_{h,\eps}$ and $\sigma_{h,\eps}$ gives $\|\chi_{h,\eps}(\phi_{h}(t,z)) - \phi_{h}(T(t,z))\| \leq \delta/2$ and, consequently, $\sigma_{h,\eps}(\phi_{h}(t,z)) \le \delta$ for every $(t,z) \in X$.

\end{proof}

Propositions~\ref{prop:h_0 non injective}--\ref{prop:h_0 U} complete the proof of  Theorem~\rm\ref{thm:counterexample}.

\section{Examples and discussion on assumptions}\label{sec:examples}

In this section we provide examples showing the necessity of several assumptions within our results.

\subsection{Theorem~\ref{thm:ssoy2.i hdim} does not hold for the information dimension}\label{sec:no id}

In the following proposition we present an example showing that in Theorem~\ref{thm:ssoy2.i hdim}, one cannot replace the assumption $k < \hdim T^k ( \mu |_{ X \setminus \Prep_k (T)})$ by $k < \lid (T^k ( \mu |_{ X \setminus \Prep_k (T)}))$.

\begin{prop}\label{prop:ex ID}
There exists a compact set $X \subset \R^2$, a Borel probability measure $\mu$ on $X$ and a Lipschitz map $T \colon X \to X$ such that the following hold.
\begin{enumerate}[$($a$)$]
	\item $\mu \big( \bigcup_{p = 1}^\infty \Prep_p(T)\big) = 0$,
	\item $\hdim\mu < 1$,
	\item $\lid(\mu) = \lid(T \mu) > 1$.
	\item A prevalent Lipschitz observable $h \colon X \to \R$ is almost surely $1$-predictable, in particular $\lim_{\eps \to 0} \mu(\{x \in X: \sigma_{h,\eps}(\phi_h(x)) > \delta\}) = 0$ for every $\delta > 0$, where $\phi_h$ is the $1$-delay coordinate map corresponding to $h$.
\end{enumerate}

\end{prop}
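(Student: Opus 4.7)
The plan is to take the phase space $X = [0,1]^2 \subset \R^2$ together with the Lipschitz contraction
\[
T\colon X \to X, \qquad T(x,y) = (x/2,\, y/2),
\]
which is bi-Lipschitz onto its image $[0,1/2]^2$. Since $T^p(x,y) = (x/2^p, y/2^p)$, the equation $T^p(x,y) = T^q(x,y)$ with $0 \le q < p$ forces $(x,y)=(0,0)$, hence $\bigcup_{p=1}^\infty \Prep_p(T) = \{(0,0)\}$ and (a) will hold as soon as $\mu$ has no atom at the origin. Bi-Lipschitzness of $T$ onto its image also gives $\hdim(T\mu) = \hdim\mu$ and $\lid(T\mu) = \lid\mu$, so (b) and (c) reduce to constructing a Borel probability measure $\mu$ on $[0,1]^2$ with $\hdim \mu < 1$, $\lid \mu > 1$, and no atom at $(0,0)$.

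The heart of the proof is therefore the construction of such a $\mu$. I would use a non-homogeneous Moran-type construction on the dyadic grid of $[0,1]^2$ alternating \emph{spreading} levels (at which each surviving square divides its mass equally among all four dyadic children) with \emph{concentrating} levels (at which each surviving square transfers all of its mass to a single child, chosen away from $(0,0)$ and in a manner that depends on the cell). Let $f(n)$ denote the number of spreading levels up to level $n$. For a point $x$ in the resulting support and a scale $\eps \asymp 2^{-n}$, one has $\mu(B(x,\eps)) \asymp 4^{-f(n)}$, so $\log\mu(B(x,\eps))/\log\eps \asymp 2f(n)/n$. The sequences of spreading and concentrating levels are to be chosen so that $f(n)/n$ oscillates between a value close to $0$ (along a subsequence $n=n_j$) and a value close to $1$ (along $n=m_j$); then for $\mu$-almost every $x$ one gets $\underline{d}(\mu, x) = 0$ and therefore $\hdim\mu = \underset{x\sim\mu}{\mathrm{ess\ sup}}\,\underline{d}(\mu,x) = 0$. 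The essential trick is that the choices made at the concentrating levels are \emph{staggered across different cells}, so that the subsequence of ``bad'' scales on which $\mu(B(x,\eps))$ stays bounded below differs from point to point. Consequently at any universal scale $\eps$ only a small-measure set of $x$ witnesses the concentration, whereas for the rest one keeps $\mu(B(x,\eps)) \le \eps^s$ with some fixed $s>1$, yielding $\liminf_{\eps\to 0}\int \log\mu(B(x,\eps))/\log\eps\,d\mu \ge s>1$, i.e.\ $\lid\mu>1$. Measures exhibiting such ``information-dimension anomalies'' are well known in the multifractal literature.

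Given $\mu$ as above, (a)--(c) are immediate. For (d), Theorem~\ref{thm:predict_prob}(a) applied with $k=1>\hdim\mu$ yields that a prevalent Lipschitz observable $h\colon X \to \R$ is almost surely $1$-predictable; by Remark~\ref{rem:two kinds of predict} this forces $\sigma_{h,\eps}(\phi_h(x)) \to 0$ as $\eps\to 0$ for $\mu$-almost every $x$, and hence $\mu(\{x : \sigma_{h,\eps}(\phi_h(x))>\delta\})\to 0$ for every $\delta>0$. The main obstacle is the measure construction in the previous paragraph: forcing $\underline{d}(\mu, x)=0$ $\mu$-almost everywhere while keeping the integral $\int \log\mu(B(x,\eps))/\log\eps\,d\mu$ uniformly bounded below by a constant $>1$ requires precisely that the subsequences of scales at which the measure locally concentrates be asynchronous across the support. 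Every other step is either a direct calculation or an appeal to Theorem~\ref{thm:predict_prob}.
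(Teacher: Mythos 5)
Your framing of the example is fine and in places simpler than the paper's: taking $T(x,y)=(x/2,y/2)$, which is bi-Lipschitz onto its image with $\bigcup_p\Prep_p(T)=\{(0,0)\}$, legitimately reduces (a)--(c) to producing a planar measure with $\hdim\mu<1<\lid(\mu)$, and (d) follows, exactly as in the paper, from Theorem~\ref{thm:predict_prob}(a) with $k=1>\hdim\mu$ together with Remarks~\ref{rem:two kinds of predict} and~\ref{rem:<>dim}. The gap is in the one step that is the actual content of the proposition: the construction of $\mu$. As written, your Moran scheme is internally inconsistent. If the spreading/concentrating schedule is global (so that $f(n)$ is a function of $n$ alone), then every surviving level-$n$ square carries mass exactly $4^{-f(n)}$, no matter which child is selected at the concentrating levels; staggering \emph{which child receives the mass} changes where the mass sits spatially but not the mass of the cell containing a given support point. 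Hence the integrand $\log\mu(B(x,\eps))/\log\eps\asymp 2f(n)/n$ is the same for all $x$ simultaneously, and your requirement $f(n_j)/n_j\to 0$ forces $\lid(\mu)=0$, not $\lid(\mu)>1$. What you actually need is to stagger the \emph{schedule itself} across branches (point-dependent $f_x(n)$), which contradicts your displayed asymptotics and is a different construction; its quantitative heart --- arranging $\liminf_n f_x(n)/n=0$ for $\mu$-a.e.\ $x$ while $\int 2f_x(n)/n\,d\mu(x)$ stays bounded below by some $s>1$ at \emph{every} large $n$ --- is precisely what you defer to ``the multifractal literature'' without proof. A second, related issue: once neighbouring branches run different schedules, on the unseparated dyadic grid a ball $B(x,2^{-n})$ can meet a much heavier ``concentrating'' neighbour cell, so $\mu(B(x,2^{-n}))$ need not be comparable to the mass of $x$'s own cell, and the upper bounds on ball measures needed for $\lid(\mu)>1$ require either separation or a boundary-mass argument you do not give.

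For comparison, the paper discharges exactly this step by citation and coding rather than a bare-hands construction: it takes the four-corner Cantor set with ratio $\lambda\in(1/4,1/2)$ (strong separation, so balls are comparable to cylinders), notes that the coding map from $\{0,1,2,3\}^{\N}$ is bi-Lipschitz, and pushes forward the measure of \cite[Section 3]{FanLauRao02}, a convex combination of product measures with $\hdim\nu<1<\lid(\nu)=\lid(\sigma\nu)$, taking $T$ to be the conjugated shift. Your proof would be complete if you either cited such a construction explicitly or carried out the staggered-schedule argument (including the separation/boundary issue) in detail; as it stands, the central existence claim is asserted rather than proved.
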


\begin{proof} It will be convenient for us to consider certain self-similar sets and measures constructed via iterated function systems. For an introduction to this theory see \cite{BSSBook}. Consider an iterated function system $\{f_i\}_{i=0}^3$, where $f_i \colon \R^2 \to \R^2$ are given as 
\[
f_i(x) = \lam x + t_i, \qquad t_0 = (0,0), \; t_1 = (0,1 - \lam), \; t_2 = (1 - \lam, 0), \; t_3 = (1 - \lam,1 - \lam)
\]
for some $1/4 < \lam < 1/2$. Let $X \subset \R^2$, called 
a \emph{four-corner Cantor set}, be the attractor of the system $\{f_i\}_{i=0}^3$, i.e.~the unique non-empty compact set satisfying $X = \bigcup_{i=0}^3 f_i (X)$. 
We have $X = \pi(\Om)$, where $\pi \colon \Om \to \R^2$ is the \emph{natural projection map} from the symbolic space $\Om = \{0,1,2,3\}^\N$, given by $\pi (\om_1, \om_2, \ldots) = \lim_{n \to \infty} f_{\om_1} \circ \cdots \circ f_{\om_n} (0)$ (see e.g.~\cite[Chapter 2]{BP17}). As $\lam < 1/2$, the sets $f_i([0,1]^2)\subset [0,1]^2$ are pairwise disjoint, hence $\pi$ is a bijection. Moreover, it is straightforward to check that if we endow $\Om$ with a metric $d(\om, \tau) = \lam^{|\om \wedge \tau|}$ (where $\om \wedge \tau$ is the longest common prefix of infinite words $\om \neq \tau$ and $|\om \wedge \tau|$ is its length), then $\pi$ is a bi-Lipschitz map and $$\hdim X = \bdim X = \hdim \Om = \bdim \Om = \frac{\log 4}{- \log \lam}$$ (see e.g.~\cite[Theorem~2.2.2]{BP17}). Hence, $1 < \hdim \Om = \bdim \Om < 2$, and following the construction in \cite[Section 3]{FanLauRao02}, one can construct a non-atomic Borel probability measure $\nu$ on $\Om$ with $\supp\nu = \Om$ and $\hdim \nu < 1 < \lid(\nu) = \lid(\sigma \nu)$, where $\sigma \colon \Om^\N \to \Om^\N$ is the left-side shift (in this construction, $\nu$ is a convex combination of infinite product measures).\footnote{ The construction in \cite{FanLauRao02} corresponds to $\Om = \{0,1\}^\N$ with $\lam = 1/2$, but extends directly to our case.} Setting $T:X \to X$ and $\mu$ on $X$ as
$$T = \pi \circ \sigma \circ \pi^{-1}, \qquad \mu = \pi \nu,$$
and using the fact that $\pi$ is bi-Lipschitz and the set $\bigcup_{p = 1}^\infty \Prep_p(\sigma)$ is countable, one obtains
\[
\mu \Big( \bigcup_{p = 1}^\infty \Prep_p(T)\Big) = 0, \qquad \hdim\mu < 1, \qquad \lid(\mu) = \lid(T \mu) > 1,
\]
so $\mu$ satisfies assertions (a)--(c). Note that for $k = 1$, the measure $\mu$ satisfies the assumptions of Theorem~\ref{thm:ssoy2.i hdim}, with condition $\lid(T \mu) > 1$ instead of $\hdim(T \mu) > 1$. On the other hand, as $\hdim\mu < 1$, it follows from \cite[Theorem~1.18]{BGS22} that a prevalent Lipschitz observable $h \colon X \to \R$ is almost surely $1$-predictable, which implies $\lim_{\eps \to 0}\ \mu(\{x \in X: \sigma_{h,\eps}(\phi(x)) > \delta\}) = 0$ for every $\delta > 0$ (see~Remark~\ref{rem:<>dim}). This shows (d).
\end{proof}

\begin{rem}
Note that the measure $\mu$ constructed in Proposition~\ref{prop:ex ID} is not a natural measure for a smooth diffeomorphism (it is even non-invariant), hence it does not provide a counterexample to assertion~\ref{it: ssoy2 i} of the SSOY prediction error conjecture. Finding such a counterexample (or proving that it does not exist) remains an open question.
\end{rem}

\subsection{In Theorems~\ref{thm:ssoy2.i hdim} and~\ref{thm:injectivity sections}--\ref{thm:absolute continuity}, \boldmath $T^k \mu$ (or $T^{k-1} \mu$) cannot be replaced by $\mu$}\label{sec:iterate examples}

We present an example showing that in Theorem~\ref{thm:ssoy2.i hdim}, the assumption $k < \hdim T^k ( \mu |_{ X \setminus \Prep_k (T)} )$ cannot be replaced by $k < \hdim ( \mu |_{ X \setminus \Prep_k (T)})$, and in Theorem~\ref{thm:absolute continuity}, the assumption $k < \lhdim T^{k-1} \mu$ cannot be replaced by $k < \lhdim \mu$. Additionally, the example shows that 
in Theorems~\ref{thm:injectivity sections} and \ref{thm:predictability sections}, the lower bounds in assertions \ref{it:predictability sections lhdim} and \ref{it:predictability sections uhdim} cannot be replaced by $\lhdim \mu - k$ and $\hdim \mu - k - \eps$, respectively. Note that the example also shows that in each of the above cases, one cannot replace $T^k \mu$ (or $T^{k-1} \mu$) by $\mu$ even under the additional assumption of the injectivity of $T$.

\begin{prop}\label{prop:ex Tk mu} There exists a compact set $X \subset \R^3$, a Borel probability measure $\mu$ on $X$ and a Lipschitz injective map $T \colon X \to X$, with $\hdim \mu > 2$ and $\hdim T^k\mu < 1$ for every $k \geq 1$, such that for a prevalent Lipschitz observable $h\colon X \to \R$ and the $2$-delay coordinate map $\phi_h$ corresponding to $h$, the following hold.
\begin{enumerate}[$($a$)$]
	\item $h$ is almost surely $2$-predictable, in particular $\lim_{\eps \to 0} \mu(\{x \in X: \sigma_{h,\eps}(\phi(x)) > \delta\}) = 0$ for every $\delta > 0$.
	\item $\phi_h \circ T(\mu_{h, \phi_h(x)}) = \delta_{\phi_h(Tx)}$, and hence $\hdim \phi_h \circ T(\mu_{h, \phi_h(x)}) = 0$ for $\mu$-almost every $x \in X$.
	\item The measure $\phi_h \mu$ is supported on a set of Hausdorff dimension smaller than $2$, so it is not absolutely continuous with respect to the $2$-dimensional Lebesgue measure in $\R^2$.
\end{enumerate}
\end{prop}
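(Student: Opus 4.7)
The plan is to build $X$ as two spatially separated copies of a self-similar Cantor set $C \subset [0,1]$: the "thick" piece $X_0 = C^3$ will carry the measure $\mu$ and have Hausdorff dimension slightly above $2$, while the "thin" piece $X_1$ will be a bi-Lipschitz copy of $C$ of dimension below $1$, and $T$ will be arranged so that $T(X) \subset X_1$---all mass enters the low-dimensional sink after one iteration---while $T$ remains Lipschitz and injective. The crucial step, only possible because $X_0$ is totally disconnected, is to Lipschitz-inject $X_0$ into $X_1$; I would achieve this via a symbolic interleaving of the three binary codings of $X_0$.

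Fix a contraction ratio $a \in (2^{-3/2}, 1/2)$ and let $C \subset [0,1]$ be the standard two-branch self-similar Cantor set with ratio $a$, so $\hdim C = \log 2 / (-\log a) \in (2/3, 1)$ and $\hdim C^3 > 2$. Put $X_0 := C^3 \subset [0,1]^3$, $X_1 := \{5\} \times \{5\} \times C \subset \R^3$, $X := X_0 \cup X_1$, and take $\mu$ to be the natural product (Bernoulli $\tfrac{1}{2}$) measure on $X_0$, so $\hdim \mu = 3 \hdim C > 2$. Using the standard coding $\pi \colon \{0,1\}^{\N} \to C$, define $T \colon X \to X$ by
\[
T\bigl(\pi(\xi), \pi(\eta), \pi(\zeta)\bigr) := \bigl(5,\, 5,\, \pi(0, \xi_1, \eta_1, \zeta_1, \xi_2, \eta_2, \zeta_2, \ldots)\bigr), \qquad T\bigl(5, 5, \pi(\omega)\bigr) := \bigl(5, 5, \pi(1, \omega)\bigr).
\]
Prepending $0$ vs.\ $1$ keeps the two image pieces inside the disjoint top-level sub-Cantor sets $aC$ and $aC+(1-a)$, and each component map is injective on sequences, so $T$ is injective on $X$. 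Spatial separation of $X_0$ and $X_1$ makes the full $T$ Lipschitz once each piece is, and the Lipschitz bound for $T|_{X_0}$ follows from the symbolic fact that interleaving three sequences agreeing to depth $m$ yields a sequence agreeing to depth at least $3m + 1$: in Euclidean coordinates this gives $|T(p) - T(q)| \lesssim a^{3m+1}$ against $|p - q| \gtrsim a^m$ for $p, q \in X_0$, with ratio bounded by $a^{2m+1} \le a$. Since $T(X) \subset X_1$ and iterates remain in bi-Lipschitz scaled copies of $C$, one gets $\hdim T^k \mu = \hdim C < 1$ for every $k \ge 1$.

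For (a), apply Theorem~\ref{thm:predict_prob}(a) to the restricted system $(X_1, T|_{X_1}, T\mu)$ with $k = 1 > \hdim T\mu$: a prevalent $h \in \Lip(X_1, \R)$---and hence, since the polynomial probe set restricts, a prevalent $h \in \Lip(X, \R)$---is almost surely $1$-predictable on $X_1$, so there exist a full $T\mu$-measure Borel set $X_h \subset X_1$ and a map $S_h$ with $h(Ty) = S_h(h(y))$ for $y \in X_h$. For $\mu$-a.e.\ $x \in X_0$ one has $Tx \in X_h$, so $h(T^2 x) = S_h(h(Tx))$, and $S_h^{(2)}(u, v) := (v, S_h(v))$ realizes the required $2$-prediction map for $(X, T, \mu)$; the vanishing of $\mu(\{\sigma_{h, \varepsilon}(\phi_h(x)) > \delta\})$ follows from Remark~\ref{rem:two kinds of predict}. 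Part (b) is immediate from (a) via Lemma~\ref{lem: cond meas iff}(b). For (c), $\supp \phi_h \mu = \phi_h(X_0) \subset h(X_0) \times h(T(X_0))$; since $T(X_0) \subset X_1$ and $h$ is Lipschitz, the second factor has upper box dimension at most $\bdim X_1 = \hdim C < 1$, and the product-dimension bound $\hdim(A \times B) \le \hdim A + \bdim B$ gives $\hdim \phi_h(X_0) \le 1 + \hdim C < 2$, so $\phi_h \mu$ is singular with respect to $2$-dimensional Lebesgue measure. The main obstacle is the construction of $T|_{X_0}$: Lipschitz-injecting a Cantor set of Hausdorff dimension $>2$ into one of dimension $<1$ is only possible because of the interleaving trick, which trades one coarse level of $X_0$ for three fine levels of $X_1$; once this is in place, the remaining verifications reduce to short applications of Theorem~\ref{thm:predict_prob}, Lemma~\ref{lem: cond meas iff}, and the product dimension bound.
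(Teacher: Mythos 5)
Your construction is correct, and it follows the same blueprint as the paper's proof: a two-piece phase space in which the measure sits on a Cantor-type set of dimension above $2$ and the Lipschitz injection $T$ dumps all mass after one step into a Cantor set of dimension below $1$, after which the positive results with $k=1$ applied to $T\mu$ on the image, plus the product bound $\hdim(A\times B)\le \hdim A+\udim B$, give (a)--(c). The differences are in the implementation. First, you realize the dimension-dropping Lipschitz injection explicitly by interleaving the three binary codings of $C^3$ into $C$ and keep global injectivity by prepending the symbols $0$ (for points coming from $X_0$) and $1$ (for points of the sink), so the sink maps into itself; the paper instead postulates an abstract Lipschitz bijection between two self-similar sets with the same coding but different contraction ratios, and preserves injectivity of $T$ by sending the sink through an infinite chain of shrinking bi-Lipschitz copies $X_3, X_4,\dots$ accumulating at a fixed point $0$. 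Your version is more concrete and avoids the infinite chain, at the cost of the interleaving estimate, which you carry out correctly. Second, for (a)--(b) you invoke almost sure $1$-predictability of $h$ on the image (Theorem~\ref{thm:predict_prob}(a)) and assemble the $2$-step predictor $S(u,v)=(v,S_h(v))$ by hand, whereas the paper applies the almost sure injectivity result with $k=1$ to $T\mu$, concluding that $\phi_h$ is almost surely injective and $\mu_{h,\phi_h(x)}=\delta_x$, from which (a) and (b) drop out simultaneously. Both routes are valid; the paper's gives the slightly stronger conclusion that the conditional measures themselves are Dirac. One small point: Lemma~\ref{lem: cond meas iff}(b) literally gives only that $\phi_h\circ T(\mu_{h,\phi_h(x)})$ is \emph{some} Dirac measure; to pin it at $\delta_{\phi_h(Tx)}$ as stated in (b) you should add the one-line observation that, by \eqref{eq:cond meas decomp}--\eqref{eq:cond meas inverse}, for $\phi_h\mu$-a.e.\ $y$ the conditional measure $\mu_{h,y}$ gives full mass to $\phi_h^{-1}(y)\cap T^{-1}(X_h)$, on which $\phi_h\circ T$ is constant equal to $S(y)$, and $S(\phi_h(x))=\phi_h(Tx)$ for $\mu$-a.e.\ $x$. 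This is routine and does not affect the validity of the argument.
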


\begin{proof}
We define $X \subset \R^3$ to be a compact set, which is a disjoint union $X = \bigcup_{j = 1}^\infty X_j \cup \{ 0 \}$, where
\begin{enumerate}[(i)]
	\item $X_1$ is a compact self-similar set with $\hdim X_1 = \bdim X_1 > 2$,
	\item $X_2$ is a compact self-similar set with $\hdim X_2 = \bdim X_2 < 1$,
	\item $X_1$ and $X_2$ are homeomorphic by a Lipschitz bijection $T \colon X_1 \to X_2$,
	\item $X_j = f_j(X_2)$ for $j \geq 3$, where $f_n \colon \R^3 \to \R^3$ is a similarity with scaling ratio $2^{-j}$ (in particular $X_n$ and $X_2$ are bi-Lipschitz homeomorphic),
	\item $\lim_{n \to \infty} \dist(X_n, \{ 0\}) = 0$.
\end{enumerate}
The sets $X_1$ and $X_2$ can be constructed similarly as in Section~\ref{sec:no id}, both being homeomorphic to the same symbolic space $\Omega$, but with different contraction rates. Using (i)--(v), we can define a Lipschitz injective map $T \colon X \to X$ such that $T(X_j) = X_{j+1}$ for $j \geq 1$ and $T(0)=0$. Let $\mu$ be a probability measure on $X_1$ satisfying $\hdim \mu = \lhdim \mu = \hdim X_1 > 2$ (one can take $\mu$ to be the dimension maximizing self-similar measure on $X_1$, see e.g.~\cite[proof of Theorem~2.2.2]{BP17} or \cite[Theorem~5.2.5]{edgar}). Note that $\hdim T^j\mu \leq \hdim X_2 < 1$ for every $j \geq 1$. As $\hdim T \mu < 1$, the map $T$ is injective and $0$ is the only periodic point of $T$, we can apply \cite[Theorem~1.2]{BGS20} to conclude that for a prevalent Lipschitz observable $h \colon X \to \R$, there exists a set $X_h \subset X_2$ of full $T\mu$-measure, on which $h$ is injective. As $T$ is injective, this implies that the $2$-delay coordinate map $\phi_h \colon X \to \R^2$ is injective on the set $T^{-1}(X_h)$ of full $\mu$-measure. Indeed, if $\phi_h(x) = \phi_h(y)$ for $x,y \in T^{-1}(X_h)$, then $h(Tx) = h(Ty)$, hence $Tx = Ty$ and, consequently, $x = y$. Therefore, $\mu_{h,\phi_h(x)} = \delta_x$ for $\mu$-almost every $x \in X$. This immediately implies assertions (a)--(b). To show~(c), note that $\phi_h \mu$ is supported on $\R \times h(T (X_1)) = \R \times h(X_2)$ and $\hdim( \R \times h(X_2)) \leq 1 + \udim X_2 < 2$.
\end{proof}

\subsection{Assumptions on pre-periodic points are necessary}

As noted in Remark~\ref{rem:assum preper}, to see that some assumptions on the size of the set of preperiodic points of $T$ are necessary, it is enough to consider the case when the map $T$ is the identity. In this case every observable is predictable, regardless of the dimension of the phase space and the measure $\mu$. Moreover, the measure $\phi_h \mu$ is supported on a $1$-dimensional diagonal in $\R^k$, hence it cannot be absolutely continuous with respect to the $k$-dimensional Lebesgue measure for $k > 1$. One can easily construct similar examples, where all the points of the phase space are (pre-)periodic with the same period $p$, $p > 1$. Therefore, one cannot remove the assumption on pre-periodic points in Theorems~\ref{thm:ssoy2.i hdim}, \ref{thm:predictability sections}--\ref{thm:absolute continuity} and~Theorem~\ref{thm:loc dim} below.

\appendix

\section{Local dimension projection theorem for delay coordinate maps}\label{app:loc dim}

We prove the following result on the local dimensions of the push-forward of Borel probability measures on compact sets in $\R^N$ by delay coordinate maps. The extends \cite[Theorem~3.5, Remark 4.4]{SauerYorke97} to a more general case.
	
	\begin{thm}\label{thm:loc dim}
		Let $X \subset \R^N$, $N \in \N$, be a compact set, let $\mu$ be a Borel probability measure on $X$ and let $T\colon X \to X$ be a Lipschitz map. Fix $k \in \N$ and $($in the case $k>1)$ assume $\mu(\Prep_{k-1}(T)) = 0$. Then for a prevalent Lipschitz observable $h \colon X \to \R$ and the $k$-delay coordinate map $\phi_h$ corresponding to $h$,
		\[ \ld(\phi_h \mu, \phi_h(x)) \geq \min\{k, \ld(T^{k-1} \mu, T^{k-1} x) \} \quad \text{for } \mu\text{-almost every } x \in X. \]
		If, additionally, $\mu$ is $T$-invariant or $T$ is bi-Lipschitz onto its image, then for a prevalent Lipschitz observable $h \colon X \to \R$,
		\[ \ld(\phi_h \mu, \phi_h(x)) = \min\{k, \ld(\mu, x) \}  \quad \text{for } \mu\text{-almost every } x \in X. \]
	\end{thm}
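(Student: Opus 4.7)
The strategy is to adapt the energy-integral method of \cite[Theorem~3.5 and Remark~4.4]{SauerYorke97} to the nonlinear delay-coordinate setting, leveraging the singular value framework developed in Section~\ref{sec:technic}. By the potential characterization of the lower local dimension (Lemma~\ref{lem:loc dim energy}), proving the first assertion reduces to verifying: for every $s \in (0, k)$, for prevalent $h$ and for $\mu$-a.e.\ $x$ with $\mE_s(T^{k-1}\mu, T^{k-1}x) < \infty$, one has $\mE_s(\phi_h\mu, \phi_h(x)) < \infty$. Taking $s$ in a countable dense subset of $(0, k)$ and reapplying Lemma~\ref{lem:loc dim energy} will then yield the pointwise lower bound $\ld(\phi_h\mu,\phi_h(x)) \ge \min\{k,\ld(T^{k-1}\mu,T^{k-1}x)\}$.

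Fix $s \in (0, k)$ and a Lipschitz observable $h$. Apply Proposition~\ref{prop:decomp} with $\ell = k-1$ and arbitrary $\eta > 0$ (only property~\ref{it:orbit separation} is used here) to partition $X$, modulo a $\mu$-null set, into a countable Borel family of compact cells with uniform within-cell orbit separation. Refine this to a countable Borel partition $\widetilde{\mF} = \{F_n\}$ for which the hypothesis of Proposition~\ref{prop:singular value on orbit} holds uniformly for every ordered pair $(x,y) \in F_n \times F_{n'}$, with constants $C_{n,n'}$ that are summable in $n'$ for each fixed $n$. For every such cell pair, every $x \in F_n$, and $d \ge 2k - 1$ in the probe set from Subsection~\ref{subsec:matrices}, Tonelli's theorem combined with \eqref{eq:matrix_form}, Lemma~\ref{lem:energy_int_ineq} (with $p = k$, using $s < k$) and Proposition~\ref{prop:singular value on orbit} yields
\[
\int_{B_m(0,1)} \int_{F_{n'}} \frac{d\mu(y)\,d\alpha}{\|\phi_\alpha(x) - \phi_\alpha(y)\|^s} \;\le\; C_{n,n'} \int_{F_{n'}} \frac{d\mu(y)}{\|T^{k-1}x - T^{k-1}y\|^s} \;\le\; C_{n,n'}\,\mE_s(T^{k-1}\mu, T^{k-1}x).
\]
Summing over $n'$ and using Fubini to exchange the $\alpha$- and $\mu$-integrals gives, for $\mu$-a.e.\ $x \in F_n$ with $\mE_s(T^{k-1}\mu,T^{k-1}x) < \infty$ and Lebesgue-a.e.\ $\alpha$, the finiteness of the full potential $\mE_s(\phi_\alpha\mu,\phi_\alpha(x))$, establishing the lower bound.

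For the equality statement, the $\ge$ direction reduces to the first assertion: bi-Lipschitz invariance of local dimensions yields $\ld(T^{k-1}\mu, T^{k-1}x) = \ld(\mu, x)$ when $T$ is bi-Lipschitz; when $\mu$ is $T$-invariant, $T^{k-1}\mu = \mu$ and integrating the general Lipschitz inequality $\ld(\mu, Tx) \le \ld(\mu, x)$ against $\mu$ forces equality $\mu$-a.e., which iterates to $\ld(\mu, T^{k-1}x) = \ld(\mu, x)$. The $\le$ direction combines the Lipschitz bound $\ld(\phi_h\mu, \phi_h(x)) \le \ld(\mu, x)$ with the inequality $\ld(\phi_h\mu, \phi_h(x)) \le k$ valid on $\{x : \ld(\mu, x) > k\}$; the latter is obtained by applying Theorem~\ref{thm:absolute continuity} to the restriction of $\mu$ to a positive-measure Borel subset where $\lhdim > k$, yielding $\phi_h(\mu|_E) \ll \mH^k$ for prevalent $h$, so that positivity of the Radon-Nikodym density at $\phi_h(\mu|_E)$-a.e.\ point caps the local dimension at $k$.

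The central technical obstacle is the construction of the refined partition $\widetilde{\mF}$ with \emph{uniform cross-cell} orbit separation together with summability of the cross-cell constants. Proposition~\ref{prop:decomp} provides only within-cell separation, and extending to ordered pairs of distinct cells requires an additional layered Borel refinement and, crucially, a treatment of the orbit-collision locus $\{(x,y) : T^a x = T^b y \text{ for some distinct } 0 \le a, b \le k-1\}$, which must be excluded. The hypothesis $\mu(\Prep_{k-1}(T)) = 0$ handles the diagonal part of this locus, while the off-diagonal part requires a separate argument based on the regularity of $\mu$ and the Borel structure of $T$. Engineering the cells so that the pair-separation constants $\eps_{n,n'}$ decay no faster than the geometric rate needed to make $\sum_{n'} \eps_{n,n'}^{-(2k-2)s}$ convergent is where most of the technical work will lie.
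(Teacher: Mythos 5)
Your overall strategy (potential-theoretic characterization of $\ld$ via Lemma~\ref{lem:loc dim energy}, the decomposition of Proposition~\ref{prop:decomp}, the singular-value bound of Proposition~\ref{prop:singular value on orbit} combined with Lemma~\ref{lem:energy_int_ineq} and Tonelli) is the same as the paper's, but the way you assemble it contains a genuine gap, and it sits exactly where you yourself locate ``most of the technical work.'' You try to bound the \emph{full} potential $\mE_s(\phi_\alpha\mu,\phi_\alpha(x))$, which forces you to control cross-cell pairs $(x,y)\in F_n\times F_{n'}$ and hence to build a refined partition with uniform cross-cell orbit separation and summable constants $C_{n,n'}$. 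This cannot be done in general under the stated hypotheses: Proposition~\ref{prop:singular value on orbit} needs $\|T^ix-T^jy\|\ge\eps$ for $i\ne j$, and the collision set $\{(x,y): T^ax=T^by \text{ for some } 0\le a\ne b\le k-1\}$ can have positive $\mu\times\mu$-measure (e.g.\ when $T$ collapses positive-measure sets onto points of other orbits); the assumption $\mu(\Prep_{k-1}(T))=0$ only kills \emph{within-orbit} collisions $T^ix=T^jx$, which is what Proposition~\ref{prop:decomp}\ref{it:orbit separation} exploits, since within a cell $y$ is so close to $x$ that the cross terms inherit the self-separation of the orbit of $x$. No Borel refinement of the partition can produce positive separation constants $\eps_{n,n'}$ on a full-measure set of cross-cell pairs in that situation, and even absent exact collisions the required summability of $\sum_{n'}C_{n,n'}$ is not achievable by the tools in the paper. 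Moreover the quantity you aim at is stronger than what is true: finiteness of the full potential is not needed for, and does not follow from, the local-dimension bound.

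The missing idea is to never leave a single cell: the paper (Theorem~\ref{thm:loc dim full}) estimates only $\int_{B_m(0,1)}\int_{F_M}\mE_s\bigl(\phi_\alpha(\mu|_F),\phi_\alpha(x)\bigr)\,d\mu(x)\,d\alpha$, where $F_M=\{x\in F:\mE_s(T^{k-1}\mu,T^{k-1}x)\le M\}$, so that both integration variables lie in the same $F$ and Proposition~\ref{prop:decomp}\ref{it:orbit separation} applies directly; this yields $\ld(\phi_\alpha(\mu|_F),\phi_\alpha(x))\ge s$ a.e.\ on the relevant set. The passage from the restricted pushforward to the full one is then done not by summing over cells but by the differentiation theorem for measures: $\phi_\alpha(\mu|_F)\ll\phi_\alpha\mu$, the Radon--Nikodym derivative is positive and finite $\phi_\alpha(\mu|_F)$-a.e., hence $\ld(\phi_\alpha(\mu|_F),\phi_\alpha(x))=\ld(\phi_\alpha\mu,\phi_\alpha(x))$ for $\mu$-a.e.\ $x\in F$. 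Inserting this step repairs your argument and makes the cross-cell machinery unnecessary. Your treatment of the equality case is essentially fine (the invariance/bi-Lipschitz argument for $\ld(T^{k-1}\mu,T^{k-1}x)=\ld(\mu,x)$ matches the paper), though the upper bound $\ld(\phi_h\mu,\phi_h(x))\le k$ does not need Theorem~\ref{thm:absolute continuity}: it is immediate since $\phi_h\mu$ is a measure on $\R^k$ (together with the Lipschitz bound $\ld(\phi_h\mu,\phi_h(x))\le\ld(\mu,x)$), and note that invoking Theorem~\ref{thm:absolute continuity} would anyway require $k<\lhdim T^{k-1}(\mu|_E)$ rather than $k<\lhdim\mu|_E$ in the general (non-invariant, non-bi-Lipschitz) situation.
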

	
	Theorem~\ref{thm:loc dim} provides the following corollary.
	
	\begin{cor}\label{cor:loc dim}
	Let $X \subset \R^N$, $N \in \N$, be a compact set, let $\mu$ be a Borel probability measure on $X$ and let $T\colon X \to X$ be a Lipschitz map, such that $\mu$ is $T$-invariant or $T$ is bi-Lipschitz onto its image. Fix $k \in \N$ and $($in the case $k>1)$ assume $\mu\left( \bigcup \limits_{p=1}^{k-1}\Per_{p}(T) \right) = 0$. Suppose that the local dimension $d(\mu, x)$ exists for $\mu$-almost every $x \in X$. Then for a prevalent Lipschitz observable $h \colon X \to \R$, the local dimension $d(\phi_h \mu, \phi_h(x))$ exists at $\mu$-almost every $x \in X$ and satisfies
		\[ d(\phi_h \mu, \phi_h(x)) =  \min\{k, d(\mu, x) \}, \]
		where $\phi_h$ is the $k$-delay coordinate map corresponding to $h$.
	\end{cor}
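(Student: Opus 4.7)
The plan is to sandwich the local dimension $d(\phi_h\mu, \phi_h(x))$ between two bounds: a lower bound that is the content of Theorem~\ref{thm:loc dim}, and an upper bound $\min\{k, d(\mu,x)\}$ which is elementary. Both use only the hypothesis already at hand.

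First I would check that the corollary's hypothesis $\mu\bigl(\bigcup_{p=1}^{k-1}\Per_p(T)\bigr)=0$ actually implies the hypothesis $\mu(\Prep_{k-1}(T))=0$ needed to invoke Theorem~\ref{thm:loc dim}. In the bi-Lipschitz-onto-image case, $T$ is injective, so from $T^{k-1}x=T^ix$ with $0\le i\le k-2$ one deduces $T^{k-1-i}x=x$, giving $\Prep_{k-1}(T)\subset\bigcup_{p=1}^{k-1}\Per_p(T)$. In the $T$-invariant case, the same computation shows $T^ix\in\bigcup_{p=1}^{k-1}\Per_p(T)$ for some $0\le i\le k-2$, so $\Prep_{k-1}(T)\subset\bigcup_{i=0}^{k-2}T^{-i}\bigl(\bigcup_{p=1}^{k-1}\Per_p(T)\bigr)$, and $T$-invariance kills each summand.

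For the lower bound, I would apply the second (equality) statement of Theorem~\ref{thm:loc dim} to conclude that for a prevalent Lipschitz $h$,
\[
\ld(\phi_h\mu,\phi_h(x))\ \ge\ \min\{k,\ld(\mu,x)\}\ =\ \min\{k,d(\mu,x)\}\qquad\text{for }\mu\text{-a.e. }x,
\]
using that $d(\mu,x)$ exists, so $\ld(\mu,x)=d(\mu,x)$. The two upper bounds on $\ud(\phi_h\mu,\phi_h(x))$ are independent and complementary. The first is a Lipschitz inclusion: since $h$ is Lipschitz and $T$ is Lipschitz, $\phi_h$ is Lipschitz on $X$ with some constant $L=L(h,T,k)$, hence $\phi_h\mu(B(\phi_h(x),r))\ge\mu(B(x,r/L))$, which upon dividing $\log$'s by $\log r<0$ and taking the limsup gives $\ud(\phi_h\mu,\phi_h(x))\le\ud(\mu,x)=d(\mu,x)$. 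The second is the standard fact that any finite Borel measure $\nu$ on $\R^k$ satisfies $\ud(\nu,y)\le k$ for $\nu$-a.e.\ $y$ (a short Vitali-covering argument: the set where $\nu(B(y,r))\le r^s$ arbitrarily often, for any $s>k$, has $\nu$-measure $\le C(\max r_i)^{s-k}\to 0$ by disjointness of the covering balls in a bounded region); applied to $\nu=\phi_h\mu$, it yields $\ud(\phi_h\mu,\phi_h(x))\le k$ for $\mu$-a.e.\ $x$.

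Combining the two upper bounds gives $\ud(\phi_h\mu,\phi_h(x))\le\min\{k,d(\mu,x)\}$ for $\mu$-a.e.\ $x$, and together with the lower bound from Theorem~\ref{thm:loc dim} we obtain
\[
\min\{k,d(\mu,x)\}\ \le\ \ld(\phi_h\mu,\phi_h(x))\ \le\ \ud(\phi_h\mu,\phi_h(x))\ \le\ \min\{k,d(\mu,x)\},
\]
so the local dimension $d(\phi_h\mu,\phi_h(x))$ exists $\mu$-a.e.\ and equals $\min\{k,d(\mu,x)\}$. The argument is entirely a corollary of Theorem~\ref{thm:loc dim}: the only point requiring care is the hypothesis reduction in the first paragraph, and there is no substantive obstacle beyond that.
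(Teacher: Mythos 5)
Your proof is correct and follows essentially the same route as the paper: the lower bound is precisely the equality assertion of Theorem~\ref{thm:loc dim}, and the upper bound combines the Lipschitz estimate $\ud(\phi_h\mu,\phi_h(x))\le \ud(\mu,x)$ with the standard fact that $\ud(\nu,y)\le k$ for $\nu$-almost every $y$ for any finite Borel measure $\nu$ on $\R^k$ --- which is exactly what the paper invokes by saying ``in the same way as in the proof of Theorem~\ref{thm:loc dim}''. Your explicit reduction of the hypothesis $\mu\bigl(\bigcup_{p=1}^{k-1}\Per_p(T)\bigr)=0$ to the hypothesis $\mu(\Prep_{k-1}(T))=0$ of Theorem~\ref{thm:loc dim} is a worthwhile detail that the paper leaves implicit (the same reduction appears in the proof of Theorem~\ref{thm:absolute continuity full}), and it is carried out correctly in both the injective and the invariant case.
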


		To show Theorem~\ref{thm:loc dim}, we first prove the following more specific result.
		
		\begin{thm}\label{thm:loc dim full}
		Let $X \subset \R^N$, $N \in \N$, be a compact set, let $\mu$ be a Borel probability measure on $X$ and let $T\colon X \to X$ be a Lipschitz map. Fix $k \in \N$ and $($in the case $k>1)$ assume $\mu(\Prep_{k-1}(T)) = 0$.
		Fix $d \geq 2k-1$ and let $\{h_1, \ldots, h_m\}$ be the set of monomials of $N$ variables of degree at most $d$. Let $h \colon X \to \R$ be a Lipschitz observable. Then for the $k$-delay coordinate map $\phi_h$ corresponding to $h$ and Lebesgue-almost every $\alpha \in \R^m$,
			\[ \ld(\phi_\alpha \mu, \phi_\alpha(x)) \geq \min\{k, \ld(T^{k-1} \mu, T^{k-1} x) \}  \quad \text{for } \mu\text{-almost every } x \in X. \]
		\end{thm}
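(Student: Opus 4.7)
My plan is to adapt the transversality/energy-integral technology used in the proof of Theorem~\ref{thm:absolute continuity full} (and in the projection-theorem style proofs of Theorems~\ref{thm:injectivity sections lower full} and~\ref{thm:predictability sections lower full}) to the pointwise local-dimension setting, exploiting the $s$-potential characterization of $\ld$ from Lemma~\ref{lem:loc dim energy}.

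\textbf{Reduction and stratification.} Fix rationals $0 < s' < s < k$; by Lemma~\ref{lem:loc dim energy} it suffices to show that for Lebesgue-almost every $\alpha \in B_m(0,1)$,
\[
\mE_{s'}(\phi_\alpha\mu, \phi_\alpha(x)) < \infty \quad \text{for $\mu$-a.e. } x \in A_s := \{x \in X : \ld(T^{k-1}\mu, T^{k-1}x) > s\},
\]
since a countable intersection over rationals $s',s$ upgrades this to the claimed pointwise bound. Stratify $A_s$ by
\[
A_{s, n} := \{x \in X : T^{k-1}\mu(B(T^{k-1}x, r)) \le r^s \text{ for all } 0 < r \le 1/n\},
\]
so that $A_s \subset \bigcup_n A_{s, n}$ up to a $\mu$-null set. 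A standard change-of-variables argument (the same one used in the proof of Proposition~\ref{prop:decomp}\ref{it:cdim>hdim}) yields the uniform upper bound $\mE_{s'}(T^{k-1}\mu, T^{k-1}x) \le C(n, s, s')$ for $x \in A_{s, n}$.

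\textbf{Main estimate.} Apply Proposition~\ref{prop:decomp} with $\ell = k-1$ and some small $\eta > 0$ to obtain a countable family $\mF$ of compact sets, each with a positive uniform orbit-separation constant $\eps(F)$, whose union has full $\mu$-measure. For each $F \in \mF$ and $n \in \N$, Fubini reduces matters to verifying the finiteness of
\[
I_{F, n} := \int_{B_m(0,1)} \int_{F \cap A_{s, n}} \mE_{s'}(\phi_\alpha\mu, \phi_\alpha(x))\, d\mu(x)\, d\alpha.
\]
Writing $\mE_{s'}(\phi_\alpha\mu, \phi_\alpha(x)) = \int_X \|\phi_\alpha(x) - \phi_\alpha(y)\|^{-s'} d\mu(y)$, exchanging the order of integration, using the affine form~\eqref{eq:matrix_form} for $\phi_\alpha(x) - \phi_\alpha(y)$, and applying Lemma~\ref{lem: key_ineq_inter} with $p = k$, the inner $\alpha$-integral is bounded by $C/\sigma_k(D_{x, y})^{s'}$. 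Proposition~\ref{prop:singular value on orbit}, whenever the orbit-separation hypothesis is met for $(x, y)$, gives $\sigma_k(D_{x, y}) \ge C_F \|T^{k-1}x - T^{k-1}y\|$, so the $(x, y)$-integrand is dominated by $C/\|T^{k-1}x - T^{k-1}y\|^{s'}$. The $y$-integral then equals $\mE_{s'}(T^{k-1}\mu, T^{k-1}x)$, uniformly bounded on $F \cap A_{s, n}$, yielding $I_{F, n} \le C(F, n, s, s') \mu(F \cap A_{s, n}) < \infty$.

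\textbf{Main obstacle.} The principal technical difficulty is that Proposition~\ref{prop:singular value on orbit} demands orbit separation for the specific pair $(x, y)$, whereas the potential $\mE_{s'}(\phi_\alpha\mu, \phi_\alpha(x))$ integrates over all $y \in X$, not merely over $y \in F$. I would resolve this by building the family $\mF$ so each piece sits inside a small Euclidean ball centered in $Y_q$; slightly enlarging the ball (while keeping it inside $Y_q$) produces a companion $\tilde F \supset F$ on which the orbit-separation bound propagates to all pairs $(x, y) \in F \times \tilde F$ via the triangle inequality and Lipschitz control on $T$, exactly as in the construction of $\mF$ in Proposition~\ref{prop:decomp}. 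Pairs $(x, y)$ with $y$ outside every enlargement containing $x$ in turn get captured by the enlargement of some companion piece $F' \ni y$, so a countable refinement of the cover guarantees that each pair $(x, y)$ lies in at least one orbit-separated product, and the resulting contributions sum finitely after Fubini. The hypothesis $\mu(\Prep_{k-1}(T)) = 0$ (vacuous when $k = 1$) is exactly what ensures $\eps(F) > 0$ for all pieces used, matching the hypothesis of Proposition~\ref{prop:decomp}.
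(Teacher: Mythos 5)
Your skeleton follows the paper's strategy up to the point you yourself flag as the ``main obstacle,'' and your diagnosis of that obstacle is exactly right --- but your proposed resolution does not work, and this is a genuine gap. The separation hypothesis of Proposition~\ref{prop:singular value on orbit} involves the \emph{cross} distances $\|T^i x - T^j y\|$ for $i \neq j$, so a pair in which $y$ lies near a shifted point of the orbit of $x$ (say $y$ close to $Tx$, with $k \ge 2$) is never orbit-separated with any positive constant, no matter which pieces of which cover $x$ and $y$ are assigned to. Enlarging the ball defining $F$, or observing that $y$ sits in its own separated piece $F'$, controls only $\|T^i y - T^j y\|$ and distances to the centre of $F'$; it says nothing about $\|T^i x - T^j y\|$. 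Hence the claim that after a countable refinement ``each pair $(x,y)$ lies in at least one orbit-separated product'' is false, and for such pairs the transversality constant genuinely degenerates (the bound coming from Lemma~\ref{lem:SY97} tends to $0$ as $\min_{i \neq j}\|T^i x - T^j y\| \to 0$), so their contribution to $\mE_{s'}(\phi_\alpha\mu,\phi_\alpha(x))$ cannot be absorbed by your Fubini summation; such near-collision pairs may well carry positive $\mu\times\mu$-mass. (A minor further slip: for the inner $\alpha$-integral of $\|D_{x,y}\alpha+w_{x,y}\|^{-s'}$ you need Lemma~\ref{lem:energy_int_ineq}, or a layer-cake argument on top of Lemma~\ref{lem: key_ineq_inter}; the latter alone only bounds measures of sublevel sets.)

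The paper's proof sidesteps the cross pairs entirely, and this is the idea your argument is missing. Instead of the potential of the full pushforward, it bounds $\int_{B_m(0,1)}\int_{F_M}\mE_s(\phi_\alpha(\mu|_F),\phi_\alpha(x))\,d\mu(x)\,d\alpha$, where $F_M=\{x\in F:\mE_s(T^{k-1}\mu,T^{k-1}x)\le M\}$, so that \emph{both} variables range over the same piece $F$, on which Proposition~\ref{prop:decomp}\ref{it:orbit separation} gives uniform separation; the estimate then runs exactly as in your ``main estimate'' paragraph and yields $\ld(\phi_\alpha(\mu|_F),\phi_\alpha(x))\ge \min\{k,\ld(T^{k-1}\mu,T^{k-1}x)\}$ for $\mu$-a.e.\ $x\in F$. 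The crucial extra step is the transfer $\ld(\phi_\alpha(\mu|_F),\phi_\alpha(x))=\ld(\phi_\alpha\mu,\phi_\alpha(x))$ for $\mu$-a.e.\ $x\in F$: since $\phi_\alpha(\mu|_F)\ll\phi_\alpha\mu$, the differentiation theorem gives a Radon--Nikodym derivative that is positive and finite $\phi_\alpha(\mu|_F)$-a.e., so the two lower local dimensions coincide at $\phi_\alpha(x)$. Replacing your cover-refinement device by this restriction-plus-density-transfer argument turns your proposal into the paper's proof.
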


		\begin{proof} 
		The proof follows the ideas used in \cite[Theorem~3.5]{SauerYorke97} and \cite[Theorem~4.1]{HuntKaloshin97}. As previously, it is sufficient to prove the assertion for Lebesgue-almost every $\alpha \in B_m(0,1)$. 
			Fix $\eta > 0$ and consider the decomposition $\mF$ from Proposition~\ref{prop:decomp} for $\ell = k -1$ (actually, in this proof we only make use of the properties~\ref{it:F sum} and~\ref{it:orbit separation} of the decomposition, hence $\eta$ will not be used in the proof). Fix $0 < s < k$. For $M>0$ define
			\[  F_M = \{ x \in F : \mE_s(T^{k-1} \mu, T^{k-1} x) \leq M \}. \]
			We will show that 
			\[         
			I  = \int_{B_m(0,1)} \int_{F_M} \mE_s(\phi_\alpha(\mu|_F), \phi_\alpha(x))d\mu(x)d\alpha < \infty.
			\]
To do it, note that by \eqref{eq:matrix_form}, Tonelli's theorem and Lemma~\ref{lem:energy_int_ineq},
			\[
			I  = \int_{B_m(0,1)} \int_{F_M} \int_{F} \frac{d\mu(y)d\mu(x)d\alpha}{\| \phi_\alpha(x) - \phi_\alpha(y) \|^s} 
			 =\int_{B_m(0,1)} \int_{F_M} \int_{F} \frac{d\mu(y)d\mu(x)d\alpha}{\| D_{x,y}\alpha + w_{x,y} \|^s}
			\leq C_1 \int_{F_M} \int_{F} \frac{d\mu(y)d\mu(x)}{(\sigma_k(D_{x,y}))^s}
			\]
			for some $C_1 > 0$. By Proposition~\ref{prop:decomp}\ref{it:orbit separation}, we can use Proposition~\ref{prop:singular value on orbit} to obtain
			\[
			\begin{split} I &\leq C_2 \int_{F_M} \int_{F} \frac{d\mu(y)d\mu(x)}{\|T^{k-1} x - T^{k-1}y \|^s} \leq C_2 \int_{F_M} \int_{X}  \frac{d\mu(y)d\mu(x)}{\|T^{k-1} x - T^{k-1}y \|^s}\\
				& = C_2 \int_{F_M} \Ek_s(T^{k-1} \mu, T^{k-1} x) d \mu(x) \leq C_2M < \infty.
			\end{split}
			\]
			Letting $M \to \infty$ and applying Lemma~\ref{lem:loc dim energy}, we obtain, for Lebesgue-almost every $\alpha \in B_m(0,1)$,
			\[ \ld(\phi_\alpha(\mu|_F), \phi_\alpha(x)) \geq s \quad \text{for } \mu\text{-almost every } x \in F \text{ such that } \mE_s(T^{k-1} \mu, T^{k-1} x) < \infty. \]
			Applying Lemma~\ref{lem:loc dim energy} once more, we arrive at
			\[ \ld(\phi_\alpha(\mu|_F), \phi_\alpha(x)) \geq s \quad \text{for } \mu\text{-almost every } x \in F \text{ such that } \ld(T^{k-1} \mu, T^{k-1} x) > s. \]
			Taking intersection over all rational $s \in (0,k)$ we obtain, for Lebesgue-almost every $\alpha \in B_m(0,1)$,
			\[ \ld(\phi_\alpha(\mu|_F), \phi_\alpha(x)) \geq \min \{ k,  \ld(T^{k-1} \mu, T^{k-1} x) \} \quad \text{for } \mu\text{-almost every } x \in F. \]
			As  $\mu \left(\bigcup_{F \in \mF} F\right) = 1$, to end the proof it is sufficient to show  
			\begin{equation}\label{eq:loc dim restriction}
				\ld(\phi_\alpha(\mu|_F), \phi(x)) = \ld(\phi_\alpha \mu, \phi(x)) \quad \text{for } \mu\text{-almost every } x \in F.
			\end{equation}
			To prove  \eqref{eq:loc dim restriction}, note that $\phi_\alpha(\mu|_F) \ll \phi_\alpha \mu$, so by the differentiation theorem for measures (see e.g.~\cite[Theorem~2.12]{mattila}), the Radon-Nikodym derivative 
			\[\frac{d\phi_\alpha(\mu|_F)}{d\phi_\alpha \mu}(y) = \lim_{\delta \to 0} \frac{\phi_\alpha(\mu|_F)(B(y,\delta))}{\phi_\alpha \mu(B(y,\delta))}  \]
			exists and is positive and finite for $\phi_\alpha(\mu|_F)$-almost every $y \in \R^k$.
			Therefore, for $\phi_\alpha(\mu|_F)$-almost every $y \in \R^k$,
			\[ \ld(\phi_\alpha \mu, y ) = \liminf_{\delta \to 0} \frac{\log \left(\phi_\alpha(\mu|_F)(B(y,\delta))\right) + \log \frac{\phi_\alpha \mu(B(y,\delta))}{ \phi_\alpha(\mu|_F)(B(y,\delta))} }{\log \delta} = \ld(\phi_\alpha(\mu|_F), y). \]
			In particular, this holds with $y = \phi(x)$ for $\mu$-almost every $x \in F$, which proves \eqref{eq:loc dim restriction}.
		\end{proof}

		\begin{proof}[{Proof of Theorem~\rm\ref{thm:loc dim}}]
			The main assertion follows directly from Theorem~\ref{thm:loc dim full}. The additional ones use the observation that if $\mu$ is $T$-invariant or $T$ is bi-Lipschitz onto its image, then
			\begin{equation}\label{eq:dim invariance} \ld(T^{k-1} \mu, T^{k-1}x) = \ld(\mu, x) \quad  \text{for } \mu\text{-almost every } x \in X.
			\end{equation}
			Indeed, if $\mu$ is $T$-invariant, then $T^{k-1} \mu = \mu$, and as $T$ is Lipschitz, we have $B(x, r/\Lip(T)) \subset T^{-1}(B(Tx,r))$, so $\mu(B(x, r/\Lip(T)) \leq \mu(B(Tx, r))$ by the invariance of $\mu$. Therefore, $\ld(\mu, Tx) \leq \ld(\mu, x)$.
			On the other hand, by the invariance of $\mu$, there holds $\int \ld(\mu, Tx)d\mu(x) = \int \ld(\mu, x)d\mu(x)$. This proves \eqref{eq:dim invariance}. If $T$ is bi-Lipschitz, then $B(x, r/\Lip(T)) \subset T^{-1}(B(Tx,r)) \subset B(x, \Lip(T^{-1})r)$, so $\ld(T \mu, Tx) = \ld(\mu, x)$ at every $x \in X$ and \eqref{eq:dim invariance} follows. Moreover, as $\phi_\alpha$ is Lipschitz for every Lipschitz observable $h$, the same arguments as above used for $\phi_\alpha$ instead of $T^{k-1}$ provide
			\[ \ld(\phi_\alpha \mu, \phi_\alpha(x)) \leq \min\{k, \ld(\mu, x)\}  \quad \text{for } \mu\text{-almost every } x \in X. \]
			Combining this with \eqref{eq:dim invariance} and Theorem~\ref{thm:loc dim full} ends the proof of the additional assertions.
		\end{proof}
		
		\begin{proof}[{Proof of Corollary~\rm\ref{cor:loc dim}}]
			In the same way as in the proof of Theorem~\ref{thm:loc dim}, for a prevalent Lipschitz observable $h \colon X \to \R$ we obtain the upper bound
			\[ \ud(\phi_h \mu, \phi(x)) \leq \min\{k, \ud(\mu, x)\} = \min\{k, d(\mu, x)\}  \quad \text{for } \mu\text{-almost every } x \in X. \]
			By Theorem~\ref{thm:loc dim}, we also have
			\[ \ld(\phi_h \mu, \phi(x)) = \min\{k, \ld(\mu, x)\} = \min\{k, d(\mu, x)\} \quad \text{for } \mu\text{-almost every } x \in X. \]
			Combining these facts finishes the proof.
		\end{proof}

\bibliographystyle{alpha}
\bibliography{universal_bib}

\newcommand{\etalchar}[1]{$^{#1}$}
\def\cprime{$'$} \def\cprime{$'$}
\begin{thebibliography}{MRCA14}

\bibitem[Aba96]{Abarbanel_book}
Henry D.~I. Abarbanel.
\newblock {\em Analysis of observed chaotic data}.
\newblock Institute for Nonlinear Science. Springer-Verlag, New York, 1996.

\bibitem[BG{\'{S}}20]{BGS20}
Krzysztof Bara\'{n}ski, Yonatan Gutman, and Adam {\'{S}}piewak.
\newblock A probabilistic {T}akens theorem.
\newblock {\em Nonlinearity}, 33(9):4940--4966, 2020.

\bibitem[BG{\'{S}}22a]{BGS22}
Krzysztof Bara\'{n}ski, Yonatan Gutman, and Adam {\'{S}}piewak.
\newblock On the {S}hroer-{S}auer-{O}tt-{Y}orke predictability conjecture for
  time-delay embeddings.
\newblock {\em Comm. Math. Phys.}, 391(2):609--641, 2022.

\bibitem[BG{\'{S}}22b]{BGSPredict}
Krzysztof Bara{\'{n}}ski, Yonatan Gutman, and Adam {\'{S}}piewak.
\newblock Prediction of dynamical systems from time-delayed measurements with
  self-intersections.
\newblock preprint arXiv:2212.13509, 2022.

\bibitem[Bin60]{Bing60}
R.~H. Bing.
\newblock A simple closed curve is the only homogeneous bounded plane continuum
  that contains an arc.
\newblock {\em Canadian J. Math.}, 12:209--230, 1960.

\bibitem[Bow08]{BR08}
Rufus Bowen.
\newblock {\em Equilibrium states and the ergodic theory of {A}nosov
  diffeomorphisms}, volume 470 of {\em Lecture Notes in Mathematics}.
\newblock Springer-Verlag, Berlin, revised edition, 2008.
\newblock With a preface by David Ruelle, Edited by Jean-Ren{\'e} Chazottes.

\bibitem[BP17]{BP17}
Christopher~J. Bishop and Yuval Peres.
\newblock {\em Fractals in probability and analysis}, volume 162 of {\em
  Cambridge Studies in Advanced Mathematics}.
\newblock Cambridge University Press, Cambridge, 2017.

\bibitem[BR23]{BaghReddy23}
Niraj Bagh and M.~Ramasubba Reddy.
\newblock Investigation of the dynamical behavior of brain activities during
  rest and motor imagery movements.
\newblock {\em Biomedical Signal Processing and Control}, 79:104153, 2023.

\bibitem[BSS23]{BSSBook}
Bal{\'a}zs B{\'a}r{\'a}ny, K{\'a}roly Simon, and Boris Solomyak.
\newblock {\em Self-similar and self-affine sets and measures}, volume 276 of
  {\em Math. Surv. Monogr.}
\newblock Providence, RI: American Mathematical Society (AMS), 2023.

\bibitem[Cab00]{CaballeroEmbed}
Victoria Caballero.
\newblock On an embedding theorem.
\newblock {\em Acta Math. Hungar.}, 88(4):269--278, 2000.

\bibitem[{\v{C}}P88]{CenysPyragas88}
A.~{\v{C}}enys and K.~Pyragas.
\newblock Estimation of the number of degrees of freedom from chaotic time
  series.
\newblock {\em Physics Letters A}, 129(4):227--230, 1988.

\bibitem[DLSR23]{Dlotko_et_al}
Pawe{\l} D{\l}otko, Micha{\l} Lipi\'{n}ski, and Justyna Signerska-Rynkowska.
\newblock Testing topological conjugacy of time series.
\newblock preprint arXiv:2301.06753, 2023.

\bibitem[Edg98]{edgar}
Gerald~A. Edgar.
\newblock {\em Integral, probability, and fractal measures}.
\newblock Springer-Verlag, New York, 1998.

\bibitem[Fal97]{FalconerTechniques}
Kenneth Falconer.
\newblock {\em Techniques in fractal geometry}.
\newblock John Wiley \& Sons, Ltd., Chichester, 1997.

\bibitem[Fal14]{falconer2014fractal}
Kenneth Falconer.
\newblock {\em Fractal geometry}.
\newblock John Wiley \& Sons, Ltd., Chichester, third edition, 2014.
\newblock Mathematical foundations and applications.

\bibitem[FLR02]{FanLauRao02}
Ai-Hua Fan, Ka-Sing Lau, and Hui Rao.
\newblock Relationships between different dimensions of a measure.
\newblock {\em Monatsh. Math.}, 135(3):191--201, 2002.

\bibitem[FS87]{FarmerSidorowich87}
J.~Doyne Farmer and John~J. Sidorowich.
\newblock Predicting chaotic time series.
\newblock {\em Phys. Rev. Lett.}, 59:845--848, 1987.

\bibitem[GQS18]{GQS18}
Yonatan Gutman, Yixiao Qiao, and G\'{a}bor Szab\'{o}.
\newblock The embedding problem in topological dynamics and {T}akens' theorem.
\newblock {\em Nonlinearity}, 31(2):597--620, 2018.

\bibitem[Gut16]{Gut16}
Yonatan Gutman.
\newblock Takens' embedding theorem with a continuous observable.
\newblock In {\em Ergodic theory}, pages 134--141. De Gruyter, Berlin, 2016.

\bibitem[Hat02]{hatcher2002algebraic}
Allen Hatcher.
\newblock {\em Algebraic topology}.
\newblock Cambridge University Press, Cambridge, 2002.

\bibitem[HBS15]{HBS15}
Franz Hamilton, Tyrus Berry, and Timothy Sauer.
\newblock Predicting chaotic time series with a partial model.
\newblock {\em Phys. Rev. E}, 92:010902, Jul 2015.

\bibitem[HGLS05]{hgls05distinguishing}
Chih-Hao Hsieh, Sarah~M. Glaser, Andrew~J. Lucas, and George Sugihara.
\newblock Distinguishing random environmental fluctuations from ecological
  catastrophes for the {N}orth {P}acific {O}cean.
\newblock {\em Nature}, 435(7040):336--340, 2005.

\bibitem[HK97]{HuntKaloshin97}
Brian~R. Hunt and Vadim~Yu. Kaloshin.
\newblock How projections affect the dimension spectrum of fractal measures.
\newblock {\em Nonlinearity}, 10(5):1031--1046, 1997.

\bibitem[HO16]{HoehnOversteegen}
Logan~C. Hoehn and Lex~G. Oversteegen.
\newblock A complete classification of homogeneous plane continua.
\newblock {\em Acta Math.}, 216(2):177--216, 2016.

\bibitem[HP97]{SeaClutter}
Simon Haykin and Sadasivan Puthusserypady.
\newblock Chaotic dynamics of sea clutter.
\newblock {\em Chaos: An Interdisciplinary Journal of Nonlinear Science},
  7(4):777--802, 1997.

\bibitem[HSY92]{Prevalence92}
Brian~R. Hunt, Tim Sauer, and James~A. Yorke.
\newblock Prevalence: a translation-invariant ``almost every'' on
  infinite-dimensional spaces.
\newblock {\em Bull. Amer. Math. Soc. (N.S.)}, 27(2):217--238, 1992.

\bibitem[HT94]{HuTaylorProjections}
Xiaoyu Hu and S.~James Taylor.
\newblock Fractal properties of products and projections of measures in {${\bf
  R}^d$}.
\newblock {\em Math. Proc. Cambridge Philos. Soc.}, 115(3):527--544, 1994.

\bibitem[Huk06]{Huke-report}
Jeremy~P. Huke.
\newblock Embedding nonlinear dynamical systems: A guide to {T}akens' theorem.
\newblock Manchester Institute for Mathematical Sciences EPrint 2006.26, 2006.

\bibitem[JL94]{Rainfall94}
Amithirigala~W. Jayawardena and Feizhou Lai.
\newblock Analysis and prediction of chaos in rainfall and stream flow time
  series.
\newblock {\em Journal of Hydrology}, 153(1):23--52, 1994.

\bibitem[JM98]{JMSections}
Maarit J\"{a}rvenp\"{a}\"{a} and Pertti Mattila.
\newblock Hausdorff and packing dimensions and sections of measures.
\newblock {\em Mathematika}, 45(1):55--77, 1998.

\bibitem[JNW04]{jiang2004}
Boju Jiang, Yi~Ni, and Shicheng Wang.
\newblock 3-manifolds that admit knotted solenoids as attractors.
\newblock {\em Transactions of the American Mathematical Society},
  356(11):4371--4382, 2004.

\bibitem[KBA92]{KBA92}
Matthew~B. Kennel, Reggie Brown, and Henry D.~I. Abarbanel.
\newblock Determining embedding dimension for phase-space reconstruction using
  a geometrical construction.
\newblock {\em Phys. Rev. A}, 45:3403--3411, 1992.

\bibitem[Kec95]{K95}
Alexander~S. Kechris.
\newblock {\em Classical descriptive set theory}, volume 156 of {\em Graduate
  Texts in Mathematics}.
\newblock Springer-Verlag, New York, 1995.

\bibitem[KK23]{KoltaiKunde}
Péter Koltai and Philipp Kunde.
\newblock A {K}oopman--{T}akens theorem: Linear least squares prediction of
  nonlinear time series.
\newblock preprint arXiv:2308.02175, 2023.

\bibitem[KY90]{KY90}
Eric~J. Kostelich and James~A. Yorke.
\newblock Noise reduction: finding the simplest dynamical system consistent
  with the data.
\newblock {\em Phys. D}, 41(2):183--196, 1990.

\bibitem[LPS91]{LiebertPawelzikSchuster91}
W.~Liebert, K.~Pawelzik, and H.~G. Schuster.
\newblock Optimal embeddings of chaotic attractors from topological
  considerations.
\newblock {\em Europhysics Letters}, 14(6):521, mar 1991.

\bibitem[Mar54]{Marstrand}
John~M. Marstrand.
\newblock Some fundamental geometrical properties of plane sets of fractional
  dimensions.
\newblock {\em Proc. London Math. Soc. (3)}, 4:257--302, 1954.

\bibitem[Mat75]{Mattila-proj}
Pertti Mattila.
\newblock Hausdorff dimension, orthogonal projections and intersections with
  planes.
\newblock {\em Ann. Acad. Sci. Fenn. Ser. A I Math.}, 1(2):227--244, 1975.

\bibitem[Mat95]{mattila}
Pertti Mattila.
\newblock {\em Geometry of sets and measures in Euclidean spaces}, volume~44 of
  {\em Cambridge Studies in Advanced Mathematics}.
\newblock Cambridge University Press, Cambridge, 1995.

\bibitem[MRCA14]{BioclimaticBuildings}
Ramsés Mena, Francisco Rodríguez, María Castilla, and Manuel~R. Arahal.
\newblock A prediction model based on neural networks for the energy
  consumption of a bioclimatic building.
\newblock {\em Energy and Buildings}, 82:142--155, 2014.

\bibitem[Noa91]{N91}
Lyle Noakes.
\newblock The {T}akens embedding theorem.
\newblock {\em Internat. J. Bifur. Chaos Appl. Sci. Engrg.}, 1(4):867--872,
  1991.

\bibitem[NV20]{NV18}
Raymundo Navarrete and Divakar Viswanath.
\newblock Prevalence of delay embeddings with a fixed observation function.
\newblock {\em Phys. D}, 414:132697, 15, 2020.

\bibitem[OY03]{TakensPlato03}
William Ott and James~A. Yorke.
\newblock Learning about reality from observation.
\newblock {\em SIAM J. Appl. Dyn. Syst.}, 2(3):297--322, 2003.

\bibitem[PCFS80]{PCFS80}
Norman~H. Packard, James~P. Crutchfield, J.~Doyne Farmer, and Robert~S. Shaw.
\newblock Geometry from a time series.
\newblock {\em Phys. Rev. Lett.}, 45:712--716, 1980.

\bibitem[Pes08]{pesin2008dimension}
Yakov~B. Pesin.
\newblock {\em Dimension theory in dynamical systems: contemporary views and
  applications}.
\newblock University of Chicago Press, 2008.

\bibitem[Rob99]{Robinson99}
Clark Robinson.
\newblock {\em Dynamical systems}.
\newblock Studies in Advanced Mathematics. CRC Press, Boca Raton, FL, second
  edition, 1999.
\newblock Stability, symbolic dynamics, and chaos.

\bibitem[Rob05]{Rob05}
James~C. Robinson.
\newblock A topological delay embedding theorem for infinite-dimensional
  dynamical systems.
\newblock {\em Nonlinearity}, 18(5):2135--2143, 2005.

\bibitem[Rob11]{Rob11}
James~C. Robinson.
\newblock {\em Dimensions, embeddings, and attractors}, volume 186 of {\em
  Cambridge Tracts in Mathematics}.
\newblock Cambridge University Press, Cambridge, 2011.

\bibitem[RS03]{RamsSimon03}
Micha{\l} Rams and K\'{a}roly Simon.
\newblock Hausdorff and packing measure for solenoids.
\newblock {\em Ergodic Theory Dynam. Systems}, 23(1):273--291, 2003.

\bibitem[SBDH97]{StarkEmbedSurvey}
Jaroslav Stark, David~S. Broomhead, Michael~Evan Davies, and Jeremy~P. Huke.
\newblock Takens embedding theorems for forced and stochastic systems.
\newblock In {\em Proceedings of the {S}econd {W}orld {C}ongress of {N}onlinear
  {A}nalysts, {P}art 8 ({A}thens, 1996)}, volume~30, pages 5303--5314, 1997.

\bibitem[SBDH03]{StarkStochEmbed}
Jaroslav Stark, David~S. Broomhead, Michael~Evan Davies, and Jeremy~P. Huke.
\newblock Delay embeddings for forced systems. {II}. {S}tochastic forcing.
\newblock {\em J. Nonlinear Sci.}, 13(6):519--577, 2003.

\bibitem[Sim97]{simon1997hausdorff}
K{\'a}roly Simon.
\newblock The {H}ausdorff dimension of the {S}male-{W}illiams solenoid with
  different contraction coefficients.
\newblock {\em Proceedings of the American Mathematical Society},
  125(4):1221--1228, 1997.

\bibitem[Sim12]{SimmonsRohlin}
David Simmons.
\newblock Conditional measures and conditional expectation; {R}ohlin's
  disintegration theorem.
\newblock {\em Discrete Contin. Dyn. Syst.}, 32(7):2565--2582, 2012.

\bibitem[SKY{\etalchar{+}}18]{SolarCycle18}
Volkan Sarp, Ali Kilcik, Vasyl Yurchyshyn, Jean-Pierre Rozelot, and Atila
  Ozguc.
\newblock Prediction of solar cycle 25: a non-linear approach.
\newblock {\em Monthly Notices of the Royal Astronomical Society},
  481(3):2981--2985, 09 2018.

\bibitem[SM90]{sm90nonlinear}
George Sugihara and Robert May.
\newblock Nonlinear forecasting as a way of distinguishing chaos from
  measurement error in time series.
\newblock {\em Nature}, 344(6268):734--741, 1990.

\bibitem[Sol23]{SolomyakTransversSurvey}
Boris Solomyak.
\newblock Notes on the transversality method for iterated function systems -- a
  survey.
\newblock {\em Mathematical and Computational Applications}, 28(3), 2023.

\bibitem[SSOY98]{SSOY98}
Christian~G. Schroer, Tim Sauer, Edward Ott, and James~A. Yorke.
\newblock Predicting chaos most of the time from embeddings with
  self-intersections.
\newblock {\em Phys. Rev. Lett.}, 80:1410--1413, 1998.

\bibitem[Sta99]{1999delay}
Jaroslav Stark.
\newblock Delay embeddings for forced systems. {I}. {D}eterministic forcing.
\newblock {\em J. Nonlinear Sci.}, 9(3):255--332, 1999.

\bibitem[SY97]{SauerYorke97}
Timothy~D. Sauer and James~A. Yorke.
\newblock Are the dimensions of a set and its image equal under typical smooth
  functions?
\newblock {\em Ergodic Theory Dynam. Systems}, 17(4):941--956, 1997.

\bibitem[SYC91]{SYC91}
Timothy~D. Sauer, James~A. Yorke, and Martin Casdagli.
\newblock Embedology.
\newblock {\em J. Statist. Phys.}, 65(3-4):579--616, 1991.

\bibitem[Tak81]{T81}
Floris Takens.
\newblock Detecting strange attractors in turbulence.
\newblock In {\em Dynamical systems and turbulence, {W}arwick 1980}, volume 898
  of {\em Lecture Notes in Math.}, pages 366--381. Springer, Berlin-New York,
  1981.

\bibitem[Tak02]{T02}
Floris Takens.
\newblock The reconstruction theorem for endomorphisms.
\newblock {\em Bull. Braz. Math. Soc. (N.S.)}, 33(2):231--262, 2002.

\bibitem[Vos03]{Voss03}
Henning~U. Voss.
\newblock Synchronization of reconstructed dynamical systems.
\newblock {\em Chaos}, 13(1):327--334, 2003.

\bibitem[WCL09]{Streamflow07}
Cong-Lin Wu, Kwok-Wing Chau, and Yok-Sheung Li.
\newblock Predicting monthly streamflow using data-driven models coupled with
  data-preprocessing techniques.
\newblock {\em Water Resources Research}, 45(8), 2009.

\bibitem[You82]{Y82}
Lai~Sang Young.
\newblock Dimension, entropy and {L}yapunov exponents.
\newblock {\em Ergodic Theory Dynamical Systems}, 2(1):109--124, 1982.

\end{thebibliography}

\end{document}